\documentclass{amsart}
\oddsidemargin 0mm
\evensidemargin 0mm
\topmargin 0mm
\textwidth 160mm
\textheight 230mm
\tolerance=9999
\usepackage{amssymb,amstext,amsmath,amscd,amsthm,amsfonts,enumerate,graphicx,latexsym, comment}
\usepackage[usenames]{color}
\usepackage{tikz-cd}

\usepackage[all]{xy}

\usepackage{hyperref} 


\theoremstyle{plain}
\newtheorem{thm}{Theorem}[section]

\newtheorem*{thm*}{Theorem}
\newtheorem*{cor*}{Corollary}

\newtheorem{prop}[thm]{Proposition}

\newtheorem{lem}[thm]{Lemma}
\newtheorem{cor}[thm]{Corollary}

\newtheorem{claim}{Claim}
\newtheorem*{claim*}{Claim}

\theoremstyle{definition}
\newtheorem{defn}[thm]{Definition}

\newtheorem{ex}[thm]{Example}

\newtheorem{rem}[thm]{Remark}

\newtheorem{fact}[thm]{Fact}

\newtheorem{setup}[thm]{Setup}
\newtheorem*{acknowledgments}{Acknowledgments}

\theoremstyle{remark}

\newtheorem*{proof of claim}{{\sl Proof of Claim}}

\numberwithin{equation}{thm}


\def\Ext{\operatorname{Ext}}

\def\Im{\operatorname{Im}}

\def\Ker{\operatorname{Ker}}

\def\bbZ{\mathbb{Z}}

\def\Hom{\operatorname{Hom}}

\def\Max{\operatorname{Max}}

\def\Ker{\mathrm{Ker}}
\def\Im{\mathrm{Im}}

\def\tr{\mathrm{tr}}

\def\m{\mathfrak m}
\def\n{\mathfrak n}

\newcommand{\Ann}{\mathrm{Ann}}

\newcommand{\rme}{\mathrm{e}}

\newcommand{\rmr}{\mathrm{r}}

\newcommand{\rmK}{\mathrm{K}}

\newcommand{\rmM}{\mathrm{M}}

\newcommand{\rmQ}{\mathrm{Q}}

\newcommand{\fkb}{\mathfrak{b}}

\newcommand{\fkm}{\mathfrak{m}}
\newcommand{\fkn}{\mathfrak{n}}

\newcommand{\fkp}{\mathfrak{p}}
\newcommand{\fkq}{\mathfrak{q}}

\newcommand{\fkM}{\mathfrak{M}}

\newcommand{\mapright}[1]{%
\smash{\mathop{%
\hbox to 1cm{\rightarrowfill}}\limits^{#1}}}

\newcommand{\mapleft}[1]{%
\smash{\mathop{%
\hbox to 1cm{\leftarrowfill}}\limits_{#1}}}

\def\depth{\operatorname{depth}}

\def\Ass{\operatorname{Ass}}
\def\Spec{\operatorname{Spec}}

\def\ol{\overline}

\def\tr{\mathrm{tr}}


\title[]{Characterization of almost Gorenstein rings in terms of the trace ideal}

\author{Ryotaro Isobe}
\address{Department of Mathematics and Informatics, Graduate School of Science, Chiba University, Yayoi-cho 1-33, Inage-ku, Chiba, 263-8522, Japan}
\email{ryotaro.isobe@faculty.gs.chiba-u.jp}

\author{Shinya Kumashiro}
\address{Department of Mathematics, Osaka Institute of Technology, 5-16-1 Omiya, asahi-ku, Osaka, 535-8585, Japan}
\email{shinya.kumashiro@oit.ac.jp}

\thanks{2020 {\em Mathematics Subject Classification.} 13A02, 13H10}
\thanks{{\em Key words and phrases.} $\mathbb{Z}_2$-graded ring, Gorenstein ring, almost Gorenstein ring, regular ring, idealization}
\thanks{R. Isobe was supported by JSPS KAKENHI Grant Number JP24K16910. S. Kumashiro was  supported by JSPS KAKENHI Grant Number JP24K16909 and by Grant for Basic Science Research Projects from the Sumitomo Foundation (Grant number 2200259).}


\begin{document}

\begin{abstract}
We provide a characterization of one-dimensional almost Gorenstein rings in terms of the trace ideal. As an application, we investigate the almost Gorenstein property of certain $\mathbb{Z}_2$-graded rings.

\end{abstract}

\maketitle

\section{Introduction}\label{section1}

The notion of almost Gorenstein rings was introduced by V. Barucci and R. Fr\"{o}berg \cite{BF} in Cohen-Macaulay analytically unramified local rings of dimension one. After their work, S. Goto, N. Matsuoka, and T. T. Phuong \cite{GMP} stretched the notion in arbitrary Cohen-Macaulay local rings of dimension one. 
Nowadays, the study of almost Gorenstein rings is explicated in arbitrary Cohen-Macaulay local rings (\cite{GTT}), and we can regard almost Gorenstein rings as generalized Gorenstein rings with respect to the maximal ideal in the sense of \cite{GK}.

In this article, we provide a new characterization of almost Gorenstein rings of dimension one in terms of the trace ideal, and apply it to analyze the almost Gorenstein property of a $\mathbb{Z}_2$-graded ring of the form $R\oplus \m$ with the grading $\deg R=0$ and $\deg \fkm =1$.
Let $(R, \m)$ be a Cohen-Macaulay local ring of dimension one having the canonical module $\rmK_R$. 
$R$ is called {\it an almost Gorenstein ring} if $R$ has an ideal $I$ such that $I\cong \rmK_R$ and $\rme_{I}^1(R)\le \rmr(R)$, where $\rme_{I}^1(R)$ denotes the first Hilbert coefficient of $I$. Recall that for an $R$-module $M$,
\[
\tr_R(M):= \sum_{f\in \Hom_R(M, R)} \Im f
\]
is called the {\it trace ideal} of $M$ (\cite{Lin}). 
The first main result of this article is as follows.

\begin{thm}  {\rm (Theorem \ref{p52} and Remark \ref{rem28})}\label{1.1}
Suppose that $(R, \m)$ is a Cohen-Macaulay local ring of dimension one having the canonical module $\rmK_R$. Then the following conditions are equivalent. 
\begin{enumerate}[$(1)$]
\item
$R$ is an almost Gorenstein ring.
\item
$\tr_R(\fkm \rmK_R)\supseteq \fkm$.
\end{enumerate}

\end{thm}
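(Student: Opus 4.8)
Throughout I use the following reformulation of the definition (\cite{GMP}; see also \cite{GTT}): $R$ is almost Gorenstein if and only if $R$ is Gorenstein, or there is a fractional ideal $K$ of $R$ with $K\cong \rmK_R$, $R\subseteq K$, and $\m K\subseteq R$; equivalently, $R$ has a canonical ideal $\fkc\subseteq R$ and a nonzerodivisor $a\in\fkc$ with $\m\fkc\subseteq aR$ (one may then take $K=\frac1a\fkc$, and conversely). I also use freely that $\tr_R(-)$ is an isomorphism invariant, that $\tr_R(J)\supseteq J$ for an ideal $J\subseteq R$, that $\tr_R(J)=J\cdot(R:_{Q(R)}J)$ for a regular fractional ideal $J$ (with $Q(R)$ the total quotient ring), that $\tr_R$ commutes with localization, and that (Herzog--Hibi--Stamate) $\tr_{A}(\rmK_{A})=A$ if and only if $A$ is Gorenstein, for a Cohen--Macaulay local ring $A$ with canonical module. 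If $R$ is Gorenstein then $\rmK_R\cong R$, hence $\m\rmK_R\cong\m$ and $\tr_R(\m\rmK_R)=\tr_R(\m)\supseteq\m$, so both $(1)$ and $(2)$ hold; thus I assume henceforth that $R$ is not Gorenstein. Both $(1)$ and $(2)$ then force $R$ to be generically Gorenstein: for $(1)$, an ideal isomorphic to $\rmK_R$ has positive grade (as $\depth\rmK_R=1$), hence is regular; for $(2)$, if $\fkp$ is a minimal prime then $\m\not\subseteq\fkp$, so $\tr_R(\m\rmK_R)\not\subseteq\fkp$, and localizing ($\m_\fkp=R_\fkp$) gives $R_\fkp=\tr_{R_\fkp}\big((\m\rmK_R)_\fkp\big)=\tr_{R_\fkp}(\rmK_{R_\fkp})$, so $R_\fkp$ is Gorenstein. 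Hence I may fix a canonical ideal $\fkc\subseteq R$ --- then automatically $\m$-primary and regular --- and work with fractional ideals inside $Q(R)$.

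For $(1)\Rightarrow(2)$, take $K$ with $R\subseteq K\cong \rmK_R$ and $\m K\subseteq R$. Since $R\subseteq K$, the module $\m K$ is an ideal of $R$ containing $\m=\m R$, and any isomorphism $\rmK_R\xrightarrow{\ \sim\ }K$ sends $\m\rmK_R$ onto $\m K$; therefore $\tr_R(\m\rmK_R)=\tr_R(\m K)\supseteq\m K\supseteq\m$, which is $(2)$.

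For $(2)\Rightarrow(1)$ --- the substantive direction --- I would first rewrite the trace ideal of $\m\fkc$ via colon arithmetic in $Q(R)$. Writing $J^{-1}=(R:_{Q(R)}J)$, associativity of colons gives $(\m\fkc)^{-1}=(\fkc^{-1}:_{Q(R)}\m)$, so, $\m\fkc$ being a regular ideal ($\m$ and $\fkc$ each contain a nonzerodivisor),
\[
\tr_R(\m\fkc)=\m\fkc\cdot(\fkc^{-1}:_{Q(R)}\m)=\fkc\cdot\big(\m\cdot(\fkc^{-1}:_{Q(R)}\m)\big)\subseteq \fkc\cdot\fkc^{-1}=\tr_R(\fkc).
\]
Since $\tr_R(\fkc)=\tr_R(\rmK_R)\subsetneq R$ while $(2)$ gives $\m\subseteq\tr_R(\m\fkc)\subseteq\tr_R(\fkc)$, we get $\tr_R(\fkc)=\m$ (so $R$ is in fact nearly Gorenstein), and then $\tr_R(\m\fkc)=\m$. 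It remains to extract from $\m\fkc\cdot(\m\fkc)^{-1}=\m$ a single nonzerodivisor $a\in\fkc$ with $\m\fkc\subseteq aR$ --- equivalently, dividing by $a$, a fractional ideal $K=\frac1a\fkc\cong \rmK_R$ lying between $R$ and $\m^{-1}=\Hom_R(\m,R)$, which is precisely an almost Gorenstein witness. This extraction is the main obstacle. I would reduce to the case of infinite residue field (replacing $R$ by $R[X]_{\m R[X]}$, which affects neither hypothesis nor conclusion), choose $a$ from a minimal reduction of $\fkc$ (equivalently of $\m\fkc$), and then, comparing colengths, use the equality $\tr_R(\m\fkc)=\m$ together with $\fkc:_{Q(R)}\fkc=R$ to force $\m\fkc\subseteq aR$. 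Turning the ``global'' identity $\m\fkc\cdot(\m\fkc)^{-1}=\m$ into this ``principal'' containment is the delicate step, and is, I expect, exactly what the technical result cited as Theorem~\ref{p52} is built to accomplish.
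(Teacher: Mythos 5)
Your reduction to Remark~\ref{rem28} (generic Gorensteinness from either hypothesis, so a canonical ideal exists) and your argument for $(1)\Rightarrow(2)$ are both correct. In fact your $(1)\Rightarrow(2)$ is cleaner than the paper's: the paper proves the much stronger statement $\fkm K=\fkm$ via a length count (using $\ell_R(S/R)=\rme_\omega^1(R)$, the $K$-dual of $0\to K\to S\to S/K\to 0$, and Fact~\ref{ff53}), whereas you observe directly that once $R\subseteq K$ and $\fkm K\subseteq R$, the set $\fkm K$ is an ideal containing $\fkm$, so $\tr_R(\fkm\rmK_R)=\tr_R(\fkm K)\supseteq\fkm K\supseteq\fkm$. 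This shortcut does not recover the equality $\fkm\rmK_R\cong\fkm$, but it suffices for the statement at hand. One caveat: the reformulation ``almost Gorenstein iff Gorenstein or there is $K\cong\rmK_R$ with $R\subseteq K$ and $\fkm K\subseteq R$'' is, strictly speaking, \cite[Theorem~3.11]{GMP} and requires the canonical ideal to admit a reduction (equivalently, roughly, infinite residue field); the paper deliberately avoids this via \cite{Kum}, using a $K$ for which $R\subseteq K$ is \emph{not} guaranteed (Remark~\ref{rr53}). Your announced reduction to $R[X]_{\fkm R[X]}$ addresses this, and it does preserve both hypotheses (condition (2) because trace ideals commute with flat base change), so it is a legitimate, genuinely different route.

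The gap is in $(2)\Rightarrow(1)$, and you have correctly identified exactly where it is. Your preliminary colon manipulations are right: from $\fkm\subseteq\tr_R(\fkm\fkc)\subseteq\tr_R(\fkc)\subsetneq R$ you get $\tr_R(\fkm\fkc)=\tr_R(\fkc)=\fkm$, and the problem is to pass from the ``global'' equality $\fkm\fkc\cdot(R:\fkm\fkc)=\fkm$ to the existence of a \emph{single} nonzerodivisor $a\in\fkc$ with $\fkm\fkc\subseteq aR$. You explicitly flag this extraction as ``the main obstacle'' and do not carry it out --- so the proposal is not a proof of the substantive implication. For comparison, the paper's mechanism (the content of the chain $(4)\Rightarrow(5)\Rightarrow(1)$ in Theorem~\ref{p52}) is: set $X=R:\fkm K$, so $X\fkm K=\fkm$; iterate to $X^\ell\fkm K^\ell=\fkm$ and use $K^\ell=S:=R[K]$ for $\ell\gg 0$ to get $X^\ell\subseteq\fkm:\fkm S=B:S$ where $B=\fkm:\fkm$; this forces $\fkm(B:S)=\fkm$; then Lemma~\ref{ll55} (which is Nakayama applied over the semilocal ring $B$) gives $B:S=B$, hence $S\subseteq B:B=B$ and so $\fkm K\subseteq\fkm S\subseteq\fkm$. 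This is the step your sketch (``choose $a$ from a minimal reduction, compare colengths'') gestures at but does not replace; in particular no colength comparison alone yields the principal containment --- the key input is that $B:S$ is simultaneously an ideal of $S$ and of $B$, which lets one conclude $S\subseteq B$ by Nakayama over $B$. Without that lemma (or an equivalent device) your argument stops short of the conclusion.
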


The other results concern  $\mathbb{Z}_2$-graded rings.
A ring $A$ is called a {\it $\mathbb{Z}_2$-graded ring} if $A$ has a decomposition $A=A_0 \oplus A_1$ as an additive group such that $A_{\overline{i}} A_{\overline{j}} \subseteq A_{\overline{i+j}}$ for all $\ol{i}, \ol{j}\in \mathbb{Z}_2=\{\ol{0}, \ol{1}\}$.
One of the simplest classes of commutative $\mathbb{Z}_2$-graded rings is the idealizations. For a commutative ring $R$ and an $R$-module $M$, the {\it idealization} $R\ltimes M$ of $M$ is a commutative ring defined by the additive group $A=R\oplus M$ with the product 
\[
(a,x)(b,y):=(ab, ay+bx) 
\]
for $a,b\in R$ and $x,y\in M$. The notion of the idealization has many applications and well considered. For instance, one can find a lot of papers citing \cite{AW}. We can regard the idealizations as $\mathbb{Z}_2$-graded rings by a natural grading with $\deg R=0$ and $\deg M=1$. The point is that all the products of elements of degree one are zero. As another example of $\mathbb{Z}_2$-graded rings, one can find finite extensions $R[X]/(X^2-a)=R\oplus RX$ of rings $R$, where $R[X]$ is the polynomial ring over $R$ and $a\in R$. 
Furthermore, any $\mathbb{Z}$-graded rings can be regarded as $\mathbb{Z}_2$-graded rings by reading the grading modulo $2$.

As stated in \cite{C}, the structure of $\mathbb{Z}_2$-graded rings $A=A_0\oplus A_1$ is given by a commutative ring $R=A_0$, an $R$-module $M=A_1$, and the product of elements of degree one 
\[
\varphi: M\times M \to R.
\]
Therefore, a $\mathbb{Z}_2$-graded ring $A=R\oplus M$ is explored via the triad $(R, M, \varphi)$. Since $A$ becomes the idealization if $\varphi=0$, in this article, we denote by $R\times_\varphi M$ the $\mathbb{Z}_2$-graded rings and call it the {\it idealization of $M$ with respect to $\varphi$} (Definition \ref{def1}). 

Let $(R, \m)$ be a Cohen-Macaulay local ring of dimension one. 
The second main result of this article is the following theorem, which characterizes the almost Gorenstein property of $A:=R\times_{\varphi} \m$. 
In this case, $\varphi$ can be expressed by $\alpha \in \fkm:\fkm^2 (\subseteq \rmQ(R))$ as $\varphi : \m\times \m \to R; (x, y)\mapsto \alpha xy$. 
Let $B=\m:\m=\{ \beta\in \rmQ(R) \mid \beta \m \subseteq \m \}$ be a module-finite birational extension of $R$. 

\begin{thm} {\rm (Theorem \ref{t55})}\label{1.2}
Suppose that $(R, \m)$ is a Cohen-Macaulay local ring of dimension one having an ideal $I$ such that $I\cong \rmK_R$ and $R$ is not a discrete valuation ring. Consider the following conditions.
\begin{enumerate}[\rm(1)] 
\item $A$ is an almost Gorenstein ring.
\item $R$ is an almost Gorenstein ring and $\tr_B(\langle 1, \alpha \rangle_B)=B$, where $\langle 1, \alpha \rangle_B$ denotes the $B$-module generated by $1$ and $\alpha$.
\item $R$ is an almost Gorenstein ring and either $\alpha\in B$ or $\alpha^{-1}\in B$.
\end{enumerate}
Then, {\rm (1)$\Leftrightarrow$(2)$\Leftarrow$(3)} hold. {\rm (2)$\Rightarrow$(3)} also holds if $B$ is a local ring.
\end{thm}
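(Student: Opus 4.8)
The plan is to apply Theorem~\ref{1.1} to the ring $A$ itself and then translate the resulting trace condition over $A$ into the conditions on $R$ and on $B$. First I would set up a concrete model of $A$. Since $\alpha\in\m:\m^2$ one has $\m u\cdot\m u=\alpha\m^2\subseteq\m$, so $R+\m u$ is a subring of $L:=\rmQ(R)[u]/(u^2-\alpha)$, and $(a,x)\mapsto a+xu$ is a ring isomorphism $A\xrightarrow{\ \sim\ }R+\m u$ carrying the $\mathbb{Z}_2$-grading of $A$ to the $u$-grading. As $A=R\oplus\m$ is a maximal Cohen-Macaulay $R$-module and $A/\m A\cong(R/\m)\ltimes(\m/\m^2)$ is local, $A$ is a one-dimensional Cohen-Macaulay local ring with maximal ideal $\m_A=\m+\m u$; being module-finite over $R$ it has a canonical module. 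Fixing a canonical ideal $\rmK_R\subseteq R$, one checks (comparing with $\Hom_R(A,\rmK_R)\cong\rmK_R\oplus(\rmK_R:\m)$) that, as $A$-submodules of $L$,
\[
\rmK_A\ \cong\ (\rmK_R:\m)+\rmK_R u ,
\]
where a degree-one element $xu$ ($x\in\m$) acts by $xu\cdot(\xi+\eta u)=\alpha x\eta+x\xi u$. Note $\rmQ(A)=L$, while $\rmQ(B)=\rmQ(R)$ since $B=\m:\m$ is birational over $R$; I would treat $\alpha$ as a non-zerodivisor of $\rmQ(R)$, dealing with the degenerate cases by hand.

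By Theorem~\ref{1.1} applied to $A$, condition (1) is equivalent to $\tr_A(\m_A\rmK_A)\supseteq\m_A$. Since $\m_A\rmK_A$ is a $\mathbb{Z}_2$-graded fractional ideal of $A$ containing a non-zerodivisor, one has $\tr_A(\m_A\rmK_A)=(\m_A\rmK_A)\cdot(A:_L\m_A\rmK_A)$, which is again $\mathbb{Z}_2$-graded, say $J_0+J_1u$ with $J_0\subseteq R$ and $J_1\subseteq\m$; as $\m_A=\m+\m u$, condition (1) becomes $\m\subseteq J_0$ together with $\m\subseteq J_1$. From the model above, the degree-$0$ and degree-$1$ components of $\m_A\rmK_A$ are $\m(\rmK_R:\m)+\alpha\m\rmK_R$ and $\m(\rmK_R:\m)$, both $B$-modules since $B\m=\m$; this is how $B$ and the element $\alpha$ (which couples the two degrees through $u^2=\alpha$) enter the colon-ideal bookkeeping that produces $J_0$ and $J_1$.

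The core of the proof is the equivalence $\{\m\subseteq J_0,\ \m\subseteq J_1\}\Longleftrightarrow\{R\text{ is almost Gorenstein},\ \tr_B(\langle 1,\alpha\rangle_B)=B\}$, which I would prove in two steps. If $R$ is almost Gorenstein, then $\rmK_R$, $\rmK_R:\m$, and the colon ideals in $\tr_A(\m_A\rmK_A)$ simplify substantially — using the structure theory of one-dimensional almost Gorenstein rings and the identity $R:\m=\m:\m=B$, which holds precisely because $R$ is not a discrete valuation ring — and one finds that $\m\subseteq J_0$ holds automatically (since $\tr_R(\m\rmK_R)\supseteq\m$ feeds into $J_0$) while $\m\subseteq J_1$ becomes exactly $\langle 1,\alpha\rangle_B\cdot\big(B:_{\rmQ(R)}\langle 1,\alpha\rangle_B\big)=B$, i.e. $\tr_B(\langle 1,\alpha\rangle_B)=B$. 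Conversely, if $\m\subseteq J_0$ and $\m\subseteq J_1$, one must first deduce that $R$ is almost Gorenstein: this is the delicate point, and I expect it to follow by showing that the $\alpha$-dependent parts of $J_0$ and $J_1$ cannot by themselves produce $\m$, so that one of the two containments forces $\m\subseteq\tr_R(\m\rmK_R)$; Theorem~\ref{1.1} and the first step then finish (1)$\Leftrightarrow$(2).

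Finally, the comparison of (2) and (3) reduces to $\tr_B(\langle 1,\alpha\rangle_B)$. Since $\langle 1,\alpha\rangle_B=B+B\alpha$ contains the non-zerodivisor $1$ and $B:_{\rmQ(R)}\langle 1,\alpha\rangle_B=B\cap\alpha^{-1}B$, one gets $\tr_B(\langle 1,\alpha\rangle_B)=(B\cap\alpha B)+(B\cap\alpha^{-1}B)$. If $\alpha\in B$ then $\langle 1,\alpha\rangle_B=B$, and if $\alpha^{-1}\in B$ then multiplication by $\alpha^{-1}$ is an isomorphism $\langle 1,\alpha\rangle_B\xrightarrow{\ \sim\ }B$; in either case $\tr_B(\langle 1,\alpha\rangle_B)=B$, which gives (3)$\Rightarrow$(2). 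Conversely, suppose $B$ is local with maximal ideal $\n$ and $\tr_B(\langle 1,\alpha\rangle_B)=B$. One has $B\cap\alpha B=B\Leftrightarrow\alpha^{-1}\in B$ and $B\cap\alpha^{-1}B=B\Leftrightarrow\alpha\in B$, so if both $\alpha\notin B$ and $\alpha^{-1}\notin B$ then $B\cap\alpha B$ and $B\cap\alpha^{-1}B$ are proper ideals of the local ring $B$, hence contained in $\n$, forcing $\tr_B(\langle 1,\alpha\rangle_B)\subseteq\n\subsetneq B$, a contradiction; hence $\alpha\in B$ or $\alpha^{-1}\in B$, proving (2)$\Rightarrow$(3) under the local hypothesis. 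The main obstacle is the explicit graded computation of $\tr_A(\m_A\rmK_A)$ inside $L$ and, within it, the extraction of the fact that $R$ must be almost Gorenstein; the non-DVR hypothesis is used only through $R:\m=B$, and one must also take care of non-zerodivisors in $L$ and of the case where $\alpha$ is a zerodivisor of $\rmQ(R)$.
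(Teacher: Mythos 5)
Your global strategy matches the paper's: apply Theorem~\ref{1.1} to $A$, realize $\rmK_A$ inside $\rmQ(A)$ as a $\mathbb{Z}_2$-graded fractional ideal, and reduce the trace condition $\tr_A(\fkn\rmK_A)\supseteq\fkn$ to conditions on $R$, $B$, and $\alpha$. Your model of $A$ inside $\rmQ(R)[u]/(u^2-\alpha)$ and your identification of $\rmK_A\cong(\rmK_R:\m)+\rmK_R u$ and of $\fkn\rmK_A$ are all consistent with the paper's Lemma~\ref{lll58} and Proposition~\ref{p48}. However, the heart of the proof of (1)$\Leftrightarrow$(2) is the actual computation of $\tr_A(\fkn\rmK_A)=(\fkn\rmK_A)(A:\fkn\rmK_A)$, and this is where your proposal stops short. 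You compute $\fkn\rmK_A$ but never compute the colon $A:\fkn\rmK_A$, so the components $J_0,J_1$ of $\tr_A(\fkn\rmK_A)$ remain undetermined; you then assert, without justification, that $\m\subseteq J_0$ ``holds automatically'' under the AG hypothesis on $R$ and that $\m\subseteq J_1$ ``becomes'' the $B$-trace condition. In fact the paper's Claim establishes that both graded components coincide: $J_0=J_1=\fkm\,\tr_B(\langle 1,\alpha\rangle_B K)$, which is not what your ansatz suggests. The proof of this identity relies on Lemma~\ref{l54} ($\fkm(K:\fkm)=\fkm K$), a nontrivial length computation you do not reproduce and which is what makes $\langle 1,\alpha\rangle_B K$ appear symmetrically in both components.

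The more serious gap is your treatment of the implication (1)$\Rightarrow$(2). You write that you ``expect'' to deduce that $R$ is almost Gorenstein by showing the $\alpha$-dependent parts of $J_0,J_1$ cannot produce $\m$ by themselves; but this is a hope, not an argument, and it is not obviously true. The paper's mechanism is different and sharper: from $\fkm\,\tr_B(\langle 1,\alpha\rangle_B K)=\fkm$ one sets $X=\langle 1,\alpha\rangle_B(B:\langle 1,\alpha\rangle_B K)$ and observes $X\fkm K=\fkm$, which is precisely condition (5) of Theorem~\ref{p52} — an equivalent characterization of one-dimensional almost Gorensteinness via the existence of a fractional ideal $X$ with $X\fkm K=\fkm$. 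Since Theorem~\ref{1.1} as stated gives you only the (1)$\Leftrightarrow$(4) equivalence, you do not have access to this tool, and your plan has no substitute for it. Your argument for (3)$\Rightarrow$(2) is correct, and your argument for (2)$\Rightarrow$(3) when $B$ is local — showing directly that $\tr_B(\langle 1,\alpha\rangle_B)=(B\cap\alpha B)+(B\cap\alpha^{-1}B)$ sits inside the maximal ideal if both $\alpha\notin B$ and $\alpha^{-1}\notin B$ — is a nice elementary alternative to the paper's appeal to Lindo's theorem that a module with full trace over a local ring is free.
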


In the case where $\alpha=0$, that is, $A$ is the idealization in the sense of \cite{AW}, it is known that $A$ is almost Gorenstein if and only if $R$ is almost Gorenstein (\cite[Theorem 6.5]{GMP}). Our result generalizes this to arbitrary $\alpha\in \rmQ(R)$.

In what follows, we explain how this article is organized. 
Let $R$ be a commutative ring and $M$ an $R$-module. Let $A=R\times_{\varphi} M$, where $\varphi$ is a defining map of $A$.

In Section \ref{s1}, we give the proof of Theorem \ref{1.1}.
In Section \ref{s2}, we explore the structure of $\mathbb{Z}_2$-graded rings and prepare several propositions that we need later.
In Section \ref{s3}, we characterize local, Noetherian, Artinian, and Cohen-Macaulay properties of $R\times_\varphi M$ in terms of structures of $R$, $M$, and $\varphi$. 
Although some of the results in Sections \ref{s2} and \ref{s3} are essentially known (cf. \cite{C}), we reestablish them here using our own method for the sake of completeness.
In Section \ref{s5}, we give the proof of Theorem \ref{1.2}.

Sections \ref{s4} and \ref{s7} are presented as an appendix.
In Section \ref{s4}, we characterize the Gorenstein property of $A=R\times_{\varphi} M$. Although this has already been well established (see, for example, \cite{C, F, R}), we include an alternative and insightful proof for the sake of completeness.
In Section \ref{s7}, we explore the regularity of $A=R\times_{\varphi} M$. When $A$ is the idealization, $A$ is never regular. In this article, we show that there are many non-trivial examples such that $R\times_{\varphi} M$ is regular (Examples \ref{exr1}, \ref{exsec5}). In addition, we characterize the regularity of $A$ when $\dim A\le 2$ (Corollary \ref{p59}).

\begin{setup}
In this article, all rings are commutative. For a ring $R$, $\rmQ(R)$ (resp. $\ol{R}$ and $R^{\times}$) denotes the total ring of fractions of $R$ (resp. the integral closure of $R$ and the set of unit elements of $R$). For an $R$-module $M$, $\ell_R(M)$ denotes the length of $M$.

We denote by $(R,\fkm)$ a local ring  $R$ with the unique maximal ideal $\fkm$. Let $(R,\fkm)$ be a Noetherian local ring. For a finitely generated $R$-module $M$, $\mu_R(M)$ denotes the {\it number of minimal generators} of $M$. 
If $M$ is a Cohen-Macaulay $R$-module, $\rmr_R(M)$ denotes the {\it Cohen-Macaulay type} $\ell_R(\Ext_R^{t}(R/\fkm, M))$ of $M$, where $t=\dim M$. $v(R)$ denotes the {\it embedding dimension} $\mu_R(\fkm)$ of $R$. 

A finitely generated $R$-submodule of $\rmQ(R)$ containing a non-zerodivisor of $R$ is called a {\it fractional ideal} of $R$. 
For fractional ideals $X, Y$, let $X:Y=\{ \alpha\in \rmQ(R) \mid \alpha Y\subseteq X \}$ denote the colon fractional ideal. It is well-known that $X: Y\cong \Hom_R(Y, X)$ with the correspondence $\alpha\mapsto ({\cdot}\alpha: Y \to X; y\mapsto \alpha y)$ (\cite{HK}). 

For ideals $I$ and $J$, $I:_R J=\{ a\in R \mid a J\subseteq I \}$ denotes the colon ideal of $R$.

\end{setup}

\section{Characterization of almost Gorenstein rings}\label{s1}

In this section, we characterize one-dimensional almost Gorenstein rings in terms of the trace ideal $\tr_R(\fkm \rmK_R)$. 
Here, we focus on the case of dimension one, thus let us recall the definition of almost Gorenstein rings in dimension one. 

\begin{defn} (\cite[Definition 3.1]{GMP})\label{def5.1}
Let $(R, \m)$ be a Cohen-Macaulay local ring of dimension one having the canonical module $\rmK_R$. 
$R$ is called {\it an almost Gorenstein ring} if $R$ has an ideal $I$ such that $I\cong \rmK_R$ and $\rme_{I}^1(R)\le \rmr(R)$, where $\rme_{I}^1(R)$ denotes the first Hilbert coefficient of $I$.
\end{defn}

We note that $\rme_{I}^1(R)$ is independent of the choice of canonical ideals $I\subsetneq R$  (\cite[Corollary 2.13]{GMP}). 
Throughout this section, let $(R, \m)$ be a Cohen-Macaulay local ring of dimension one having the canonical module $\rmK_R$. Suppose that there exists a canonical ideal, that is, an ideal which is isomorphic to $\rmK_R$. We then refer several results of \cite{Kum} to avoid assuming that $R/\fkm$ is infinite. By  \cite[Corollary 3.5]{Kum}, we can choose a canonical ideal $\omega$ and $a\in R$ such that $\omega^{\ell+1}=a\omega^\ell$ and $(a)^{\ell}\subseteq \omega^\ell$ for some $\ell\gg 0$ without assuming that $R/\fkm$ is infinite (\cite[Corollary 3.5]{Kum}). Set fractional ideals 
\begin{center}
$B:=\fkm:\fkm$, \quad $K:=\frac{\omega}{a}$, \quad and $S:=R[K]=K^\ell$ 
\end{center}
for $\ell\gg 0$  (see \cite[Proposition 2.4]{Kum}). With this notations, the following equivalent conditions of the almost Gorenstein property are known.

\begin{fact}{\rm (\cite[Proposition 3.10]{Kum})}\label{f52}
The following conditions are equivalent.
\begin{enumerate}[\rm(1)] 
\item $R$ is an almost Gorenstein ring. 
\item $\fkm K\subseteq R$.
\item $\fkm S=\fkm$.
\end{enumerate}
\end{fact}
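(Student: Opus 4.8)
\textbf{Proof proposal for Fact \ref{f52}.}

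The plan is to prove the equivalences by first reducing the almost Gorenstein condition to a statement about the canonical ideal $\omega$, then translating it through the fractional ideals $K$ and $S$. Since $\rme_I^1(R)$ is independent of the canonical ideal chosen, I would work throughout with the specific canonical ideal $\omega$ and the element $a$ satisfying $\omega^{\ell+1}=a\omega^{\ell}$ and $(a)^\ell\subseteq \omega^\ell$ for $\ell\gg 0$, guaranteed by \cite[Corollary 3.5]{Kum}. The first step is to recall the formula $\rme_{\omega}^1(R)=\ell_R(S/\omega)$ (or the analogous Hilbert-coefficient identity relating $\rme^1_\omega(R)$ to lengths along the filtration $\{\omega^n\}$ stabilising at $S$), and to recall that $\rmr(R)=\mu_R(K)-1=\ell_R(R/\omega \colon\! \omega)$-type expressions; the key identity here, available from the Sally-module or Hilbert-function machinery in \cite{Kum}, should be $\rme^1_{\omega}(R)-\rmr(R)=\ell_R(S/(R+K))$ or something equivalent that makes the defect nonnegative and measures exactly the failure of $\fkm K\subseteq R$.

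For \textbf{(1)$\Leftrightarrow$(2)}: using the identity above, $R$ is almost Gorenstein iff $\rme^1_\omega(R)\le \rmr(R)$ iff the relevant length defect vanishes iff $S=R+K$, equivalently $K^2\subseteq R+K$. Then I would show $K^2\subseteq R+K$ is equivalent to $\fkm K\subseteq R$: one direction uses that $\fkm K\subseteq R+K$ combined with $\fkm K\subseteq \fkm$-type containments forces $\fkm K\subseteq \fkm\subseteq R$ (here one uses $1\in K$, so $K\subseteq K^2$, and that $K/R$ is a module over $B=\fkm\colon\!\fkm$); the other direction is immediate since $\fkm K\subseteq R$ gives $K\cdot\fkm K\subseteq K$, hence $\fkm K^2\subseteq K\subseteq R+K$, and one bootstraps to $K^2\subseteq R+K$ using that $K^2/ (R+K)$ is killed by $\fkm$ but is a submodule of the faithful-mod-stuff... this is the step I'd expect to need the most care.

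For \textbf{(2)$\Leftrightarrow$(3)}: note $\fkm S$ is an ideal of $S$ sitting between $\fkm$ and $S$, and since $S=R[K]=\bigcup_n K^n$ with $K$ module-finite over $R$, we have $\fkm S=\fkm K^n$ for $n\gg 0$. If $\fkm K\subseteq R$ then $\fkm K\subseteq \fkm$ (as $\fkm K$ is an $R$-submodule of $R$ that is also a $B$-module, hence contained in $\fkm$ unless it is all of $R$, which is excluded because $K\ne R$ when $R$ is not Gorenstein — and if $R$ is Gorenstein everything is trivial), so inductively $\fkm K^n\subseteq \fkm$, giving $\fkm S=\fkm$. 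Conversely $\fkm S=\fkm$ forces $\fkm K\subseteq \fkm K^\ell=\fkm S=\fkm\subseteq R$.

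\textbf{Main obstacle.} The delicate point is the passage between the numerical condition $\rme^1_\omega(R)\le\rmr(R)$ and the module-theoretic condition $\fkm K\subseteq R$; this rests on having the right length formula for $\rme^1_\omega(R)-\rmr(R)$ in the possibly-finite residue field setting, and on the elementary but fiddly fact that a proper $R$-submodule of $R$ stable under $B=\fkm\colon\!\fkm$ must lie in $\fkm$. If \cite{Kum} already records the equivalence (1)$\Leftrightarrow$(2), as the citation to \cite[Proposition 3.10]{Kum} suggests, then the real content I would actually write out is just the short bootstrapping arguments (2)$\Leftrightarrow$(3) above, citing \cite{Kum} for (1)$\Leftrightarrow$(2).
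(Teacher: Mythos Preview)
The paper gives no proof of this statement: it is recorded as a Fact and cited from \cite[Proposition 3.10]{Kum}. Your closing instinct is therefore exactly right --- in this paper the entire ``proof'' is the citation, and Remark~\ref{rr53} explains why \cite{Kum} (rather than \cite[Theorem 3.11]{GMP}) is the appropriate reference in this generality.

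That said, your sketched argument for (1)$\Leftrightarrow$(2) has a genuine gap in the present setup. You write ``here one uses $1\in K$, so $K\subseteq K^2$,'' but the standing hypotheses only guarantee $(a)^\ell\subseteq\omega^\ell$ for $\ell\gg 0$, not $a\in\omega$; consequently $1\in K$ (equivalently $R\subseteq K$) may fail. The paper flags this explicitly just before Theorem~\ref{p52}: ``if $(a)$ is not a reduction of $\omega$, we cannot say that $R\subseteq K$. This makes the proof difficult.'' So the bootstrapping you describe, which relies on $K\subseteq K^2$ and on identities like $\rme^1_\omega(R)-\rmr(R)=\ell_R(S/(R+K))$ that presuppose $R\subseteq K$, does not go through as written; one really needs the more careful argument from \cite{Kum}. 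Your (2)$\Leftrightarrow$(3) sketch, by contrast, is essentially correct: $K\subseteq S$ holds because $S=R[K]$, the induction $\fkm K^n\subseteq\fkm$ works once $\fkm K\subseteq\fkm$, and the exclusion $\fkm K\ne R$ is exactly \cite[Lemma 3.9]{Kum}, which the paper itself invokes in the proof of Theorem~\ref{p52}.
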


\begin{fact}{\rm (\cite[Theorem 3.16]{GMP})}\label{ff53}
The following conditions are equivalent.
\begin{enumerate}[\rm(1)] 
\item $R$ is an almost Gorenstein ring but not a Gorenstein ring. 
\item $R$ has an ideal $I$ such that $I\cong \rmK_R$ and $\rme_{I}^1(R)= \rmr(R)$.
\end{enumerate}
\end{fact}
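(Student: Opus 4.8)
The plan is to run both conditions through the chain $R\subseteq K\subseteq S$ of fractional ideals from the setup above (here $K=\omega/a$ and $S=R[K]=K^{\ell}$, so $R\subseteq K$), by first rewriting $\rme^1_{\omega}(R)$ as $\ell_R(S/R)$ and then analysing the two ``jumps'' $\ell_R(K/R)$ and $\ell_R(S/K)$ under the almost Gorenstein hypothesis.

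First I would prove the identity $\rme^1_{\omega}(R)=\ell_R(S/R)$. Since $a$ is a non-zerodivisor and $a^{n+1}R\subseteq\omega^{n+1}$ for $n\gg0$, we have $\ell_R(R/\omega^{n+1})=(n+1)\,\ell_R(R/aR)-\ell_R(\omega^{n+1}/a^{n+1}R)$; filtering $a^{n+1}R\subseteq a\omega^{n}\subseteq\omega^{n+1}$ and using $a\omega^{j}/a^{j+1}R\cong\omega^{j}/a^{j}R$ gives $\ell_R(\omega^{n+1}/a^{n+1}R)=\sum_{j=1}^{n+1}\ell_R(\omega^{j}/a\omega^{j-1})$, a sum that is eventually constant because $\omega^{j}=a\omega^{j-1}$ for $j>\ell$. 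Comparing with the Hilbert polynomial, then multiplying the $j$-th summand by $a^{-j}$ and telescoping (with $K^0=R$ and $K^{\ell}=S$), one obtains $\rme^1_{\omega}(R)=\sum_{j\ge1}\ell_R(K^{j}/K^{j-1})=\ell_R(S/R)$. In particular $\rme^1_{\omega}(R)=0$ iff $S=R$ iff $K=R$ iff $R$ is Gorenstein. This already yields (2)$\Rightarrow$(1): if $\rme^1_I(R)=\rmr(R)$ for some canonical ideal $I$, then $\rme^1_I(R)\le\rmr(R)$, so $R$ is almost Gorenstein, and $R$ is not Gorenstein since otherwise $\rme^1_I(R)=0\ne1=\rmr(R)$.

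For (1)$\Rightarrow$(2), assume $R$ is almost Gorenstein but not Gorenstein. By Fact~\ref{f52}, $\m K\subseteq R$ and $\m S=\m$; hence $\m$ annihilates $K/R$, so $K/R$ is a $k$-vector space, and $1\notin\m K$ (because $\m K\subseteq\m S=\m$), so $1$ is a minimal generator of $K$. Therefore $\ell_R(K/R)=\mu_R(K/R)=\mu_R(K)-1=\rmr(R)-1$, and so $\rme^1_{\omega}(R)=\ell_R(S/R)=\ell_R(S/K)+\rmr(R)-1$; it remains to prove $\ell_R(S/K)=1$, for then $I=\omega$ witnesses (2). Lower bound: since $K\cong\rmK_R$, $K:K\cong\End_R(\rmK_R)=R$ as $R$-modules, and as $K:K$ is a module-finite subring of $\rmQ(R)$ with $1\notin\m(K:K)$ by Nakayama, $1$ generates this cyclic module, i.e. $K:K=R$; thus $K$ is not a subring of $\rmQ(R)$ (otherwise $K:K=K$, forcing $K=R\ne K$), so $K\subsetneq K^2\subseteq S$ and $\ell_R(S/K)\ge1$. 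Upper bound: apply $\Hom_R(-,\rmK_R)$ to $0\to K\to S\to S/K\to0$; since $K$ and $S$ are maximal Cohen-Macaulay we have $\Ext^1_R(S,\rmK_R)=0$ and $\Hom_R(S/K,\rmK_R)=0$, while $\Ext^1_R(S/K,\rmK_R)$ is the Matlis dual of $S/K$, so the resulting exact sequence $0\to(K:S)\to(K:K)\to\Ext^1_R(S/K,\rmK_R)\to0$ gives $\ell_R(S/K)=\ell_R\bigl((K:K)/(K:S)\bigr)$. Finally the conductor $R:_R S$ contains $\m=\m S$ and is proper (as $S\ne R$), hence $R:_R S=\m$; then $\m=R:_R S\subseteq K:S\subseteq K:K=R$ with $K:S\ne R$ (else $S\subseteq K$), so $K:S=\m$ and $\ell_R(S/K)=\ell_R(R/\m)=1$.

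The crux is the equality $\ell_R(S/K)=1$: its lower bound rests on the fact that a canonical ideal of a non-Gorenstein ring is never multiplicatively closed (which comes from $\End_R(\rmK_R)=R$), and its upper bound on the length-preserving duality $\ell_R(S/K)=\ell_R\bigl((K:K)/(K:S)\bigr)$ together with the computations $K:K=R$ and $K:S=\m$ of the relevant conductors. Everything else --- the identity $\rme^1_{\omega}(R)=\ell_R(S/R)$ and the length count for $K/R$ --- is routine bookkeeping with Hilbert coefficients and Fact~\ref{f52}.
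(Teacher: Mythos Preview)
The paper does not prove this statement at all; it is quoted verbatim as a fact from \cite[Theorem 3.16]{GMP}. So there is no ``paper's proof'' to compare against, and your argument is essentially a reconstruction of the original \cite{GMP} proof.

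That said, there is one genuine slip. You open by asserting the chain $R\subseteq K\subseteq S$ ``from the setup above'', but in the setup this paper actually uses (taken from \cite{Kum}) one only has $(a)^\ell\subseteq\omega^\ell$, \emph{not} $a\in\omega$; equivalently $R\subseteq S$ but not necessarily $R\subseteq K$. The paper itself flags this explicitly just before Theorem~\ref{p52}: ``if $(a)$ is not a reduction of $\omega$, we cannot say that $R\subseteq K$.'' Your argument uses $R\subseteq K$ repeatedly: in the telescoping step for $\rme^1_\omega(R)=\ell_R(S/R)$ (you need $K^{j-1}\subseteq K^j$, i.e.\ $1\in K$); in writing $\ell_R(K/R)$ at all and in declaring $1$ a minimal generator of $K$; in the lower bound $K\subsetneq K^2$ (this needs $1\in K$, otherwise ``$K$ is not a subring'' does not give $K\subsetneq K^2$); and in the upper bound via $R:_RS\subseteq K:S$ (this inclusion requires $R\subseteq K$).

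The fix is easy but should be stated: since the statement itself makes no reference to $K$ or $S$, you are free to choose your own canonical fractional ideal. When $R/\fkm$ is infinite, take $a\in\omega$ with $(a)$ a minimal reduction of $\omega$; then $R\subseteq K$ and every step you wrote goes through. For finite residue field, pass to $R'=R[X]_{\fkm R[X]}$: this is faithfully flat with $\fkm R'$ maximal, so $\rmr(R)=\rmr(R')$, $\rme^1_I(R)=\rme^1_{IR'}(R')$, and both the almost Gorenstein and Gorenstein properties transfer. With that one sentence added at the start, your proof is correct and is exactly the classical \cite{GMP} argument.
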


\begin{rem}\label{rr53}
Fact \ref{f52} is well-known if $\omega$ has a reduction $(a)$ (\cite[Theorem 3.11]{GMP}). 
On the other hand, there exists an example of an almost Gorenstein ring with a canonical ideal $\omega$ such that $\omega$ has no reduction (\cite[Remark 2.10]{GMP}). 
\end{rem}

To give more equivalent conditions of the almost Gorenstein property, we prepare the following. 

\begin{lem}\label{ll55}
Let $X$ be a fractional ideal of $R$. Then, $X\fkm =\fkm$ if and only if $XB=B$.
\end{lem}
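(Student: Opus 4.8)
The plan is to prove the equivalence $X\fkm=\fkm \iff XB=B$ by exploiting the well-known relationship $B=\fkm:\fkm=\End_R(\fkm)$ together with the fact that $B$ is a ring containing $R$ (since $R$ is not a field, $\fkm$ is a faithful $R$-module and $B$ is the endomorphism ring of $\fkm$ inside $\rmQ(R)$, hence a module-finite birational extension of $R$). I would also use that $\fkm$ is a $B$-module, and more precisely an ideal of $B$: indeed $\fkm B\subseteq\fkm$ by definition of $B$, and $\fkm\subseteq\fkm B$ since $1\in B$, so $\fkm B=\fkm$. This last identity is the crucial structural fact that makes both implications go through.

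For the direction $XB=B\Rightarrow X\fkm=\fkm$: multiply the hypothesis by $\fkm$ to get $X\fkm B=\fkm B$, i.e. $X\fkm=\fkm$ after using $\fkm B=\fkm$ on both sides; one must be a little careful that $X(\fkm B)=(X\fkm)B$, which holds since all these are fractional ideals inside the commutative ring $\rmQ(R)$ and multiplication of fractional ideals is associative and commutative. For the converse $X\fkm=\fkm\Rightarrow XB=B$: from $X\fkm=\fkm$ we see that every element of $X$ maps $\fkm$ into $\fkm$, hence $X\subseteq\fkm:\fkm=B$; therefore $XB\subseteq B$. For the reverse inclusion $B\subseteq XB$, the idea is that $X\fkm=\fkm$ says $\fkm$ is a faithful module over the subring generated by $X$ and that $X$ acts invertibly on $\fkm$ in a suitable sense — concretely, multiply $X\fkm=\fkm$ by $B$ to obtain $X(\fkm B)=\fkm B$, i.e. $X\fkm=\fkm$ again, which is not yet enough, so instead I would argue: since $\fkm$ is an ideal of $B$ containing a non-zerodivisor, and $X\fkm=\fkm$, we get $X(\fkm:\fkm)\supseteq$ something; more cleanly, from $X\fkm=\fkm$ it follows that $X\cdot(\fkm:\fkm)\cdot\fkm = X\fkm = \fkm$, so $X B\subseteq \fkm:\fkm=B$ and also $B=\fkm:\fkm\subseteq X\fkm:\fkm$? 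I need to be honest that the reverse inclusion requires a genuine argument.

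The cleanest route for $B\subseteq XB$ is this: from $X\fkm=\fkm$ we get, applying the colon operation, $B=\fkm:\fkm = \fkm : X\fkm = (\fkm:\fkm):X = B:X$ — wait, this uses $\fkm:X\fkm=(\fkm:\fkm):X$ which is adjunction $\Hom(X\fkm,\fkm)=\Hom(X,\Hom(\fkm,\fkm))$, valid for fractional ideals. So $B:X=B$, i.e. $XB\subseteq B$ (recovering what we had) — still the easy inclusion. Hmm. The actual argument I will commit to: since $\fkm B=\fkm$, the equality $X\fkm=\fkm$ reads $(XB)\fkm=\fkm$ after noting $X\fkm=(XB)\fkm$ would need $X\fkm = XB\fkm$; but $XB\fkm = X(B\fkm)=X\fkm$, yes. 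So WLOG replace $X$ by $XB$, i.e. we may assume $X$ is a $B$-module with $X\fkm=\fkm$, and we want $X=B$. Now $X\subseteq B$ as shown. Since $\fkm$ is an ideal of the ring $B$ with $X\fkm=\fkm$ and $\fkm$ contains a non-zerodivisor, localize/complete or use Nakayama over $B$: $B$ is semilocal (module-finite over the local ring $R$), $\fkm\subseteq\operatorname{rad}(B)$ since $B/\fkm$ is a finite $R/\fkm$-module hence Artinian and $\fkm$ is nilpotent-radical-contained... actually $\fkm B=\fkm$ is an ideal of $B$ and $B/\fkm B = B/\fkm$ is Artinian, so $\fkm\subseteq\operatorname{rad}(B)$. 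Then $X\subseteq B$ with $X+\fkm B = X+\fkm$; from $X\fkm=\fkm$ and $X\subseteq B$ we get $\fkm\subseteq X$, so $X+\fkm=X$, hence $X=B$ would follow from $B=X+\operatorname{rad}(B)$-type statement only if $X$ already generates — the point is $B=\fkm:\fkm$ and $\fkm\subseteq X\subseteq B$ with $X\fkm=\fkm$; then $B/X$ is a finitely generated $B$-module with $(B/X)\fkm = (B\fkm+X)/X=(\fkm+X)/X=0$ since $\fkm\subseteq X$, so $B/X$ is killed by $\fkm\subseteq\operatorname{rad}(B)$, and Nakayama over the semilocal ring $B$ forces $B/X=0$, i.e. $X=B$.

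The main obstacle, as the above scratch-work shows, is precisely the inclusion $B\subseteq XB$ in the converse direction: the easy inclusions ($XB\subseteq B$ and, after multiplying, $X\fkm=\fkm\Leftarrow XB=B$) are formal, but pinning down $B\subseteq XB$ requires recognizing that $B$ is semilocal with $\fkm$ inside its Jacobson radical and then applying Nakayama's lemma over $B$ — not over $R$. So the key steps in order are: (i) recall $B=\fkm:\fkm=\End_R(\fkm)$ is a ring, module-finite and birational over $R$, with $\fkm B=\fkm$ and $\fkm\subseteq\operatorname{rad}(B)$; (ii) prove $XB=B\Rightarrow X\fkm=\fkm$ by multiplying by $\fkm$ and using $\fkm B=\fkm$; (iii) for the converse, deduce $X\subseteq B$ hence $XB\subseteq B$, then note $X\fkm=\fkm$ gives $\fkm\subseteq XB$ and $(B/XB)\fkm=0$, and conclude $XB=B$ by Nakayama over the semilocal ring $B$. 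I will present these three steps; step (iii) is where all the content lies, and I would double-check there that $X$ contains a non-zerodivisor (it does, being a fractional ideal) so that all the colon/multiplication manipulations inside $\rmQ(R)$ are legitimate.
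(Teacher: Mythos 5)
Your ``if'' direction is correct and is exactly the paper's argument: multiply $XB=B$ by $\fkm$ and cancel using $\fkm B=\fkm$. The trouble is in the ``only if'' direction, where the final Nakayama step is applied to the wrong module and in the wrong form.

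After replacing $X$ by $XB$ you reach the situation $X$ a $B$-ideal, $\fkm\subseteq X\subseteq B$, $X\fkm=\fkm$, $\fkm\subseteq\operatorname{rad}(B)$, and you then argue: $(B/X)\fkm=0$, so Nakayama over the semilocal ring $B$ gives $B/X=0$. This is not an instance of Nakayama. Nakayama's lemma concludes $M=0$ from $IM=M$ with $I\subseteq\operatorname{rad}$; you instead have $IM=0$, which is the hypothesis that $M$ is a nonzero $B/I$-module as often as not (e.g.\ $M=B/\fkM$ for a maximal ideal $\fkM\supseteq\fkm$). In fact at this point your argument has used the full hypothesis $X\fkm=\fkm$ only to extract $\fkm\subseteq X$, and the implication ``$\fkm\subseteq X\subseteq B$ and $\fkm\subseteq\operatorname{rad}(B)$ implies $X=B$'' is false whenever $B/\fkm$ is not a field (take $X$ any maximal ideal of $B$). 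So the gap is genuine, not cosmetic.

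The correct Nakayama argument goes through the module $\fkm$, not $B/XB$, using the identity $(XB)\fkm=\fkm$ you already established. The paper argues by contradiction: if $XB\subsetneq B$, pick a maximal ideal $\fkM$ of $B$ containing $XB$ and localize at $\fkM$; then $(XB)_{\fkM}\cdot\fkm B_{\fkM}=\fkm B_{\fkM}$ with $(XB)_{\fkM}$ inside the Jacobson radical of the local ring $B_{\fkM}$, so Nakayama forces $\fkm B_{\fkM}=0$, which is impossible because $\fkm=\fkM\cap R$ contains a non-zerodivisor. A slicker variant of the same idea, avoiding the choice of $\fkM$, uses the general (Cayley--Hamilton) form of Nakayama: from $(XB)\fkm=\fkm$ with $\fkm$ finitely generated over $B$, there is $b\in 1+XB$ with $b\fkm=0$; since $\fkm$ contains a non-zerodivisor, $b=0$, hence $1\in XB$ and $XB=B$. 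Either version is a small repair, but note it really is a different Nakayama application from the one you wrote: the module is $\fkm$, the ideal is $XB$ (localized or not), and the conclusion one extracts is $\fkm B_{\fkM}=0$ (absurd) rather than $B/XB=0$ directly.
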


\begin{proof}
(``if'' part): We note that $\fkm B=\fkm$. Hence, the assertion follows from the equations $\fkm X=\fkm BX=\fkm B=\fkm$.

(``only if '' part): Since $X\subseteq \fkm:\fkm = B$, we have $XB\subseteq B$. Assume that $XB\subsetneq B$. There exists a maximal ideal $\fkM$ of $B$ such that $XB\subseteq \fkM$. 
By localizing $(XB) \fkm=X(\fkm B)=X\fkm = \fkm$ at $\fkM$, we have $(XB)_\fkM{\cdot}\fkm B_\fkM =\fkm B_\fkM$. It follows that $\fkm B_\fkM=0$ by Nakayama's lemma. This is a contradiction since $\fkm=\fkM\cap R$.
\end{proof}

We note a fact on trace ideals which we use in this section. 

\begin{fact}\label{lll55}
Let $(R, \fkm)$ be a Noetherian local ring. Let $I, J$ be fractional ideals of $R$. The following hold true.
\begin{enumerate}[\rm(1)] 
\item (\cite[Corollary 2.2]{GIK2}): $\tr_R(I)=(R:I)I$.
\item (\cite[Proposition 1.4]{HHS}): $\tr_R(IJ)\subseteq \tr_R(I) \cap \tr_R(J)$.
\item (\cite[Proposition 2.2]{Kum2}): $\tr_R(\fkm)=\fkm$ if and only if $R$ is not a discrete valuation ring.
\end{enumerate}
\end{fact}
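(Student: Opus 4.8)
The plan is to treat the three items separately; each follows from a short direct argument built on the identification $\Hom_R(X,R)\cong R:X$ for fractional ideals (recalled in the Setup) and on elementary manipulations of colon ideals. For item (1), I would unwind the definition $\tr_R(I)=\sum_{f\in\Hom_R(I,R)}\Im f$: under the isomorphism $\Hom_R(I,R)\cong R:I$, the homomorphism attached to $\alpha\in R:I$ is multiplication by $\alpha$, whose image is the fractional ideal $\alpha I$. Hence $\tr_R(I)=\sum_{\alpha\in R:I}\alpha I$, and this sum of $R$-submodules of $\rmQ(R)$ is by definition the product $(R:I)I$.

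For item (2), I would combine (1) with the adjunction identities $R:IJ=(R:I):J=(R:J):I$, which are immediate from the definitions (for instance $\alpha\cdot IJ\subseteq R\iff(\alpha J)I\subseteq R\iff\alpha J\subseteq R:I$). Then
\[
\tr_R(IJ)=(R:IJ)\,IJ=(((R:I):J)\,J)\,I\subseteq(R:I)\,I=\tr_R(I),
\]
using $((R:I):J)\,J\subseteq R:I$; by the symmetric identity one likewise gets $\tr_R(IJ)\subseteq\tr_R(J)$, hence the claimed containment in $\tr_R(I)\cap\tr_R(J)$. Alternatively, one can argue without (1): for $f\in\Hom_R(IJ,R)$ and a fixed $y\in J$, the map $x\mapsto f(xy)$ lies in $\Hom_R(I,R)$, so $f(xy)\in\tr_R(I)$; since such products generate $IJ$, $\Im f\subseteq\tr_R(I)$.

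For item (3), note first that the inclusion $\fkm\hookrightarrow R$ is itself a homomorphism with image $\fkm$, while every homomorphism $\fkm\to R$ has image an ideal of $R$; thus $\fkm\subseteq\tr_R(\fkm)\subseteq R$, and maximality of $\fkm$ forces $\tr_R(\fkm)\in\{\fkm,R\}$. So the statement reduces to: $\tr_R(\fkm)=R$ if and only if $R$ is a discrete valuation ring. If $\tr_R(\fkm)=\sum_f\Im f=R$, then since each $\Im f$ is an ideal, some $f\in\Hom_R(\fkm,R)$ has image not contained in $\fkm$, hence equal to $R$, i.e.\ $f$ is surjective; since $R$ is free this surjection splits, giving $\fkm=Re\oplus N$ with $Re\cong R$, so $e$ is a nonzerodivisor. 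For $n\in N$ we then have $ne\in Re\cap N=0$, and as $e$ is a nonzerodivisor $N=0$; thus $\fkm=(e)$ is principal with a nonzerodivisor generator, which is exactly the condition for $R$ to be a discrete valuation ring. Conversely, if $\fkm=(t)$ with $t$ a nonzerodivisor, then $R:\fkm=t^{-1}R$ and (1) gives $\tr_R(\fkm)=(t^{-1}R)(tR)=R\neq\fkm$.

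The steps in (1) and the colon bookkeeping in (2) are purely formal. The one place that needs a little care is the splitting argument in (3): producing an honest surjection $\fkm\twoheadrightarrow R$ from the mere equality $\tr_R(\fkm)=R$, and then using $Re\cap N=0$ together with the fact that the generator of the free summand is a nonzerodivisor to annihilate the complement $N$. That is the crux; everything else is routine unwinding of definitions.
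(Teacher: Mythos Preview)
The paper does not prove this statement: it is recorded as a \emph{Fact} with pointers to \cite{GIK2}, \cite{HHS}, and \cite{Kum2}, so there is no in-paper argument to compare against. Your proposal supplies complete, correct, and self-contained proofs of all three items using only the identification $\Hom_R(X,R)\cong R:X$ from the Setup and elementary colon manipulations; this is exactly how these facts are established in the cited sources, so nothing is lost by including your arguments.

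One small point on (3): after obtaining $\fkm=(e)$ with $e$ a nonzerodivisor, the conclusion that $R$ is a discrete valuation ring is correct but not quite ``by definition''---one still needs that $R$ is a domain. This follows in one line: by Krull's intersection theorem every nonzero $a\in R$ has the form $u e^n$ with $u$ a unit, and since $e$ is a nonzerodivisor all powers $e^n$ are nonzero, so a product $u e^m\cdot w e^n=uw\,e^{m+n}$ cannot vanish. With that added, your argument for (3) is airtight (and in particular works without any depth or dimension hypothesis on $R$, since the case $\depth R=0$ is automatically absorbed: a surjection $\fkm\twoheadrightarrow R$ produces a nonzerodivisor in $\fkm$).
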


%
%

We give several equivalent conditions of the almost Gorenstein property other than Fact \ref{f52} as follows. 
We should note that the equivalence of the conditions (1)-(3) of Theorem \ref{p52} is known when $(a)$ is a reduction of $\omega$ (\cite[Theorem 3.11]{GMP}, \cite{Kob}).
We note that if $(a)$ is not a reduction of $\omega$, we cannot say that $R\subseteq K$. This makes the proof difficult.

\begin{thm}\label{p52}
The following conditions are equivalent.
\begin{enumerate}[\rm(1)] 
\item $R$ is an almost Gorenstein ring. 
\item $\fkm K =\fkm$.
\item $\fkm \rmK_R \cong \fkm$.
\item $\tr_R(\fkm \rmK_R)\supseteq \fkm$.
\item There exists a fractional ideal $X$ of $R$ such that $X\fkm K=\fkm$.
\end{enumerate}
\end{thm}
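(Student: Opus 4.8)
The plan is to establish the cycle of implications $(1)\Leftrightarrow(2)$, then $(2)\Rightarrow(3)$, then $(3)\Rightarrow(4)$, then $(4)\Rightarrow(5)$, and finally $(5)\Rightarrow(1)$, so that all five conditions are equivalent. The equivalence $(1)\Leftrightarrow(2)$ is the real content, and it is essentially a restatement of Fact \ref{f52}: since $\fkm K \subseteq R$ is automatic once $R$ is almost Gorenstein (and $\fkm K \subseteq \fkm$ needs $\fkm K \subseteq \fkm B = \fkm$, using $K = \omega/a \subseteq B$, which holds because $\omega \subseteq \fkm$ forces $K\subseteq \fkm : \fkm = B$ — here one checks $K \subseteq B$ by noting $\fkm \omega \subseteq \fkm \cap \omega$ and using the structure of $\omega$; more directly, $\fkm K = \fkm S = \fkm$ by Fact \ref{f52}(3) together with $\fkm K \subseteq \fkm K^\ell = \fkm S$). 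Conversely $\fkm K = \fkm$ gives $\fkm K \subseteq R$, which is Fact \ref{f52}(2). So I would phrase $(1)\Leftrightarrow(2)$ as a short deduction from Fact \ref{f52}, with the one delicate point being that $\fkm K \subseteq \fkm$ must be argued (not just $\fkm K \subseteq R$), and this is where the hypothesis that $(a)$ need not be a reduction of $\omega$ causes the difficulty flagged in the remark before the theorem.

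For $(2)\Rightarrow(3)$: if $\fkm K = \fkm$ then $\fkm \rmK_R \cong \fkm K$ (since $\rmK_R \cong K$ as $R$-modules, being both isomorphic to canonical ideals, and tensoring/multiplying by $\fkm$ respects this as $K$ is a fractional ideal) $= \fkm$. For $(3)\Rightarrow(4)$: from $\fkm \rmK_R \cong \fkm$ we get $\tr_R(\fkm \rmK_R) = \tr_R(\fkm)$, and $\tr_R(\fkm) \supseteq \fkm$ trivially (indeed $\tr_R(\fkm) = \fkm$ unless $R$ is a DVR, by Fact \ref{lll55}(3), but only the inclusion $\supseteq$ is needed here, and even that is clear since $\fkm$ is not projective... actually the cleanest statement: $\tr_R(\fkm)\supseteq \fkm$ always, because if $R$ is a DVR then $\tr_R(\fkm) = R \supseteq \fkm$, and otherwise $\tr_R(\fkm) = \fkm$). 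For $(4)\Rightarrow(5)$: this is nearly tautological — $\tr_R(\fkm \rmK_R) = (R : \fkm \rmK_R)\cdot \fkm \rmK_R$ by Fact \ref{lll55}(1), so $(4)$ says $(R:\fkm\rmK_R)\,\fkm\rmK_R \supseteq \fkm$; replacing $\rmK_R$ by $K$ and taking $X = (R : \fkm K)$ together with the reverse inclusion $X \fkm K \subseteq R$ would give $X\fkm K = \fkm$ once we also check $X \fkm K \subseteq \fkm$, which follows because $X\fkm K = \tr_R(\fkm K) \subseteq \tr_R(\fkm)\cap\tr_R(K)\subseteq \tr_R(\fkm) \subseteq \fkm$ by Fact \ref{lll55}(2),(3) (and if $R$ is a DVR, $R$ is Gorenstein hence almost Gorenstein, so that case is harmless and can be set aside at the start).

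For the closing implication $(5)\Rightarrow(1)$: suppose $X\fkm K = \fkm$ for some fractional ideal $X$. The goal is to deduce $\fkm K = \fkm$, which is $(2)$, hence $(1)$. I would apply Lemma \ref{ll55}: $X\fkm K = \fkm$ means the fractional ideal $Y := XK$ satisfies $Y\fkm = \fkm$, so $YB = B$ by Lemma \ref{ll55}. But also $\fkm K \subseteq B$ (since $K \subseteq B$, as $K$ is contained in $\m:\m$ — again the point needing the structure of $\omega$), and one wants to conclude $\fkm K = \fkm$. The mechanism: from $YB = B$ we get $XKB = B$, so $KB$ is an invertible $B$-ideal, but $B$ is... here I would instead argue that $S = R[K] = K^\ell$ and $\fkm S = \fkm$ is what Fact \ref{f52}(3) requires; from $X\fkm K = \fkm$ one multiplies by $K^{\ell-1}$ to get $X\fkm K^\ell = \fkm K^{\ell-1} \subseteq \fkm K^{\ell-2} \subseteq \cdots$, and iterating/comparing shows $\fkm K^i$ stabilizes, forcing $\fkm K = \fkm$. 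I expect the main obstacle throughout to be exactly this: handling $K$ (equivalently the canonical ideal $\omega$) without the reduction hypothesis $R \subseteq K$, so that the clean identities available in \cite{GMP} must be replaced by arguments using $S = K^\ell$, Lemma \ref{ll55}, and Fact \ref{f52}; I would organize the write-up so that every step routes through one of these three tools rather than through $R\subseteq K$.
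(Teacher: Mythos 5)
Your overall plan — route through the cycle $(2)\Rightarrow(3)\Rightarrow(4)\Rightarrow(5)\Rightarrow(1)\Rightarrow(2)$ — is exactly the paper's, and the three easy links $(2)\Rightarrow(3)\Rightarrow(4)\Rightarrow(5)$ are handled essentially as the paper does. But the two hard implications have genuine gaps.

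For $(1)\Rightarrow(2)$, your ``short deduction from Fact \ref{f52}'' at best yields the inclusion $\fkm K \subseteq \fkm$: since $K\subseteq K^{\ell}=S$ and $\fkm S=\fkm$, one does get $\fkm K\subseteq\fkm S=\fkm$. What is actually delicate is the \emph{reverse} inclusion $\fkm\subseteq\fkm K$, and your argument does not touch it (writing $\fkm K=\fkm S$ with only one inclusion proved is exactly the gap). The paper's proof of this direction is a length count: using the inclusion diagram $\fkm K\subseteq R,K\subseteq S$ together with $\ell_R(S/R)=\rme^1_\omega(R)$, $\ell_R(K/\fkm K)=\rmr(R)$, a $K$-dual computation giving $\ell_R(S/K)=\ell_R(R/(K:S))\le1$, and — crucially — Fact \ref{ff53} (that $\rme^1_\omega(R)=\rmr(R)$ for a non-Gorenstein almost Gorenstein ring), one gets $\ell_R(R/\fkm K)\le1$ and then excludes $\fkm K=R$. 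None of this machinery, in particular Fact \ref{ff53}, appears in your sketch.

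For $(5)\Rightarrow(1)$, you start by applying Lemma \ref{ll55} to $Y=XK$ to get $XKB=B$, but then you invoke ``$\fkm K\subseteq B$, since $K\subseteq B$'' — which is essentially the conclusion you are trying to reach (it is equivalent to condition (2) up to the reverse inclusion) and so is circular here. You then abandon this and sketch a second approach (``$X\fkm K^{\ell}=\fkm K^{\ell-1}\subseteq\fkm K^{\ell-2}\subseteq\cdots$''), but those inclusions need $K\subseteq R$, which precisely fails when $(a)$ is not a reduction of $\omega$ — the very case the theorem is designed to cover. The paper's route is cleaner: iterate $\fkm=X\fkm K$ to get $\fkm=X^{\ell}\fkm S$, deduce $X^{\ell}\subseteq\fkm:\fkm S=B:S$, squeeze to get $\fkm=\fkm(B:S)S=\fkm(B:S)$ using that $B:S$ is an ideal of both $B$ and $S$, apply Lemma \ref{ll55} to conclude $B:S=B$, hence $S\subseteq B$ and thus $\fkm K\subseteq\fkm$, which by Fact \ref{f52} gives almost Gorensteinness. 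I'd suggest rewriting both of these implications following these lines rather than trying to shortcut via Fact \ref{f52} alone.
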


\begin{proof}
(2)$\Rightarrow$(3) and (3)$\Rightarrow$(4) are clear. 

(4)$\Rightarrow$(5): If $R$ is a discrete valuation ring, then $K=R$. Hence, we can choose $R$ as $X$. Suppose that $R$ is not a discrete valuation ring. Then, $\fkm\subseteq \tr_R(\fkm \rmK_R)\subseteq \tr_R(\fkm)=\fkm$ by Fact \ref{lll55}(2), (3). Thus, $\tr_R(\fkm \rmK_R)=\fkm$. 
On the other hand, we have $\tr_R(\fkm \rmK_R)= (R:\fkm K)\fkm K$ by Fact \ref{lll55}(1). Thus, we can choose $R:\fkm K$ as $X$.

(5)$\Rightarrow$(1): We may assume that $R$ is not a discrete valuation ring. Since $X\fkm K= \fkm$, we obtain that
\begin{align}\label{eq1111}
\fkm = X \fkm K = X^2 \fkm K^2 = \cdots = X^\ell \fkm K^\ell = X^\ell \fkm S
\end{align}
for $\ell \gg 0$. Thus, $X^\ell \subseteq \fkm :\fkm S$. By \eqref{eq1111}, we obtain that 
\[
\fkm = X^\ell \fkm S \subseteq (\fkm :\fkm S) \fkm S \subseteq \fkm.
\]
Hence, $\fkm =\fkm (\fkm :\fkm S)S=\fkm (B:S)S=\fkm (B:S)$ since $\fkm :\fkm S=B:S$ and $B:S$ is an ideal of $S$ and $B$ (see, for example, \cite[Excercises 2.11]{HS}). 
By Lemma \ref{ll55}, we have $B=(B:S)B=B:S$. Therefore, $S\subseteq B:B=B$, that is, $\fkm K\subseteq \fkm S\subseteq \fkm \subseteq R$. It follows that $R$ is almost Gorenstein by Fact \ref{f52}.

(1)$\Rightarrow$(2): Suppose that $R$ is an almost Gorenstein ring. We may assume that $R$ is not Gorenstein. By Fact \ref{f52}, we have $\fkm K\subseteq R$, and thus we have the following diagram of inclusions:
\[
\xymatrix{
& S   &  \\
R \ar@{-}[ur] &  & K \ar@{-}[ul]\\
& \fkm K \ar@{-}[ur] \ar@{-}[ul] 
}
\]
We note that $\ell_R(S/R)=\rme_{\omega}^1(R)$ by \cite[proof of Proposition 3.7(a)]{Kum} and $\ell_R(K/\fkm K)=\rmr(R)$. By considering the $K$-dual $K:-=\Hom_R(-, K)$ of the exact sequence $0 \to K \to S \to S/K\to 0$, we have $\Ext_R^1(S/K, K) \cong (K:K)/(K:S) = R/(K:S)$. It follows that $\ell_R(S/K)=\ell_R((K:K)/(K:S))=\ell_R(R/(K:S))$. On the other hand, since $\fkm S=\fkm$ by Fact \ref{f52}, we have $\fkm S = \fkm K^{\ell+1} = \fkm SK= \fkm K\subseteq K$, that is, $\fkm \subseteq K:S$. It follows that $\ell_R(S/K)=\ell_R(R/(K:S))\le 1$. Therefore, since $R$ is non-Gorenstein almost Gorenstein, by Fact \ref{ff53} we have 
\begin{align*}
\ell_R(R/\fkm K) =&\ell_R(S/K) + \ell_R(K/\fkm K) - \ell_R(S/R) \\
=&\ell_R(S/K) + \rmr(R) - \rme_{\omega}^1(R) \\
=&\ell_R(S/K) \le 1.
\end{align*}
Since $\fkm K=R$ is impossible by \cite[Lemma 3.9]{Kum}, we obtain that $\fkm K=\fkm$.
\end{proof}

\begin{rem}\label{rem28}
The equivalence (1)$\Leftrightarrow$(4) of Theorem \ref{p52} holds under the assumption that $R$ is a Cohen-Macaulay local ring of dimension one having the canonical module $\rmK_R$, that is, we need not assume the existence of an ideal $I$ such that $I\cong \rmK_R$.  Indeed, this condition is automatically satisfied under each assumption (1) and (4). 
If we assume (1), then such an ideal $I$ exists by the definition of almost Gorenstein rings (Definition \ref{def5.1}). Suppose that (4). Then, by \cite[Lemma 2.1]{HHS}, $R_\fkp$ is Gorenstein for all $\fkp\in \Ass (R)$ since $R_\fkp = (\tr_R(\fkm \rmK_R))_\fkp=\tr_{R_\fkp}((\fkm \rmK_R)_\fkp) = \tr_{R_\fkp}(\fkm{R_\fkp} \cdot \rmK_{R_\fkp})= \tr_{R_\fkp}(\rmK_{R_\fkp})$ for all $\fkp\in \Ass (R)$. 
By \cite[Proposition 3.3.18]{BH}, it follows that there exists an ideal $I$ such that $I\cong \rmK_R$. 
\end{rem}

\begin{rem}
The equivalence (1)$\Leftrightarrow$(4) of Theorem \ref{p52} gives a relation between the almost Gorenstein property and the nearly Gorenstein property. Recall that for arbitrary Cohen-Macaulay local ring $(R, \fkm)$ possessing the canonical module $\rmK_R$, $R$ is called {\it nearly Gorenstein} if $\tr_R(\rmK_R)\supseteq \fkm$ (\cite[Definition 2.2]{HHS}). Since $\tr_R(\fkm \rmK_R)\subseteq \tr_R(\rmK_R)$  by Fact \ref{lll55}(2), we recover a result of Herzog-Hibi-Stamate (\cite[Proposition 6.1]{HHS}) saying that one-dimensional almost Gorenstein rings are nearly Gorenstein.
\end{rem}

\section{The structure of $\mathbb{Z}_2$-graded rings}\label{s2}

In this section, we summarize the structure and some properties of  $\mathbb{Z}_2$-graded rings that we need later. Although some of the results in this section are essentially known (cf. \cite{C}), we reestablish them using our own method for the sake of completeness.
Let us begin with the definition of $\mathbb{Z}_2$-graded rings. 

\begin{defn}
We say that a ring $A$ is a {\it $\mathbb{Z}_2$-graded ring} if $A$ has a decomposition $A=A_0 \oplus A_1$ as an additive group such that $A_{\overline{i}} A_{\overline{j}} \subseteq A_{\overline{i+j}}$ for all $\ol{i}, \ol{j}\in \mathbb{Z}_2=\{\ol{0}, \ol{1}\}$.
\end{defn}

Here are some quick examples of $\mathbb{Z}_2$-graded rings. Many more examples follow from our results later.

\begin{ex}
Let $R$ be a ring, $M$ be an $R$-module, and $K$ be a field. $R[X, Y, Z]$ denote the polynomial ring over $R$.
The following rings $A$ have the structure of $\mathbb{Z}_2$-graded rings. 
\begin{enumerate}[\rm(1)] 
\item $A=R[X]/(X^2-a)$, where $a\in R$. In particular, $\mathbb{C}=\mathbb{R}\oplus \mathbb{R}i\cong \mathbb{R}[X]/(X^2+1)$ is a $\mathbb{Z}_2$-graded ring.
\item (Idealization): $A=R\oplus M$ with the product $(a, x)(b,y)=(ab, ay+bx)$ for $a,b\in R$ and $x,y\in M$. 
\item (Example \ref{aaa48}): Let $a,b,c\in K$. Set
\[
A = K[X, Y, Z]/(X^2, XY, XZ, Y^2 - aX, YZ - bX, Z^2 - cX).
\] 
Then, $A$ is an Artinian $\mathbb{Z}_2$-graded local ring with the grading $\deg X=0$ and $\deg Y=\deg Z =1$. Moreover, $A$ is Gorenstein if and only if $ac\ne b^2$. 
\end{enumerate}
\end{ex}


The following is a construction of $\bbZ_2$-graded rings. We also see that all $\bbZ_2$-graded rings can be obtained by this construction (Theorem \ref{structure}).

\begin{defn}\label{def1}
Let $R$ be a ring and $M$ be an $R$-module. Let $\varphi: M\times M \to R$ be an $R$-bilinear homomorphism satisfying the following conditions.
\begin{enumerate}[\rm(1)] 
\item $\varphi(x, y)=\varphi(y,x)$ for all $x,y\in M$.
\item $\varphi(x,y)z=\varphi(y,z)x$ for all $x,y,z\in M$.
\end{enumerate}
Then an additive group $A=R\oplus M$ can be regarded as a $\bbZ_2$-graded ring by the  multiplication
\[
(a,x){\cdot}(b,y):=(ab+\varphi(x,y), ay+bx),
\]
where $a,b\in R$ and $x,y\in M$. We denote the above $\bbZ_2$-graded ring by $R\times_\varphi M$ and call it {\it the idealization of $M$ with respect to $\varphi$}.
\end{defn}

\begin{rem}\label{r25}
If $\varphi=0$, then $R\times_\varphi M$ is exactly the same as the idealization of $M$ in the sense of \cite{AW}. 
\end{rem}

\begin{thm}\label{structure} 
Let $A=A_0\oplus A_1$ be a $\bbZ_2$-graded ring. Set 
\[
\varphi: A_1\times A_1 \to A_0; \ \ (x,y) \mapsto x{\cdot}y
\]
for $x,y\in M$. Then $A=A_0 \times_\varphi A_1$.
\end{thm}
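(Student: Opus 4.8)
The plan is to check directly that the triad $(A_0, A_1, \varphi)$ satisfies the hypotheses of Definition \ref{def1} and then that the product on $A_0 \oplus A_1$ produced by that construction agrees with the ambient product of $A$; the whole verification is bookkeeping and there is no genuine obstacle.

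First I would observe that $1 \in A_0$, which is what makes the notation $A_0 \times_\varphi A_1$ meaningful: $A_0$ is then a subring of $A$ and $A_1$ is an $A_0$-module via the ring multiplication. Writing $1 = e_0 + e_1$ with $e_i \in A_i$, from $e_0 = 1 \cdot e_0 = e_0^2 + e_1 e_0$ with $e_0^2 \in A_0$ and $e_1 e_0 \in A_1$ one gets $e_1 e_0 = 0$; hence $e_1 = 1 \cdot e_1 = (e_0+e_1)e_1 = e_1 e_0 + e_1^2 = e_1^2 \in A_1 A_1 \subseteq A_0$, and since $e_1 \in A_1$ this forces $e_1 = 0$, so $1 = e_0 \in A_0$.

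Next I would record that $\varphi$ meets conditions (1) and (2) of Definition \ref{def1} and is $A_0$-bilinear. Bilinearity is distributivity in $A$ together with $\varphi(ax, y) = (ax)y = a(xy) = a\,\varphi(x,y)$ for $a \in A_0$ and $x, y \in A_1$; condition (1) is the commutativity $\varphi(x,y) = xy = yx = \varphi(y,x)$; and condition (2) is $\varphi(x,y)z = (xy)z = x(yz) = (yz)x = \varphi(y,z)x$, by associativity and commutativity of $A$. Hence $A_0 \times_\varphi A_1$ is a well-defined $\mathbb{Z}_2$-graded ring whose underlying additive group is $A_0 \oplus A_1 = A$.

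Finally I would compare the multiplications. For $a, b \in A_0$ and $x, y \in A_1$, distributivity in $A$ gives $(a + x)(b + y) = ab + ay + bx + xy$, and since $ab + xy \in A_0$ and $ay + bx \in A_1$, the components of this element with respect to the decomposition $A = A_0 \oplus A_1$ are exactly $\bigl(ab + \varphi(x,y),\, ay + bx\bigr)$, which is precisely the definition of $(a,x)\cdot(b,y)$ in $A_0 \times_\varphi A_1$. Since the additive groups and the identity elements coincide as well, this identifies $A$ with $A_0 \times_\varphi A_1$. The only step needing more than a line is the verification that $1 \in A_0$; everything else is immediate from the ring axioms of $A$ and the grading inclusion $A_{\overline{i}} A_{\overline{j}} \subseteq A_{\overline{i+j}}$.
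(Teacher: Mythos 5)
Your proof is correct and follows essentially the same approach as the paper's: verify that $\varphi$ satisfies the two axioms of Definition \ref{def1} (which reduce to commutativity and associativity of $A$) and then compare the two multiplications, which visibly coincide. The one thing you add beyond the paper's terse argument is the explicit verification that $1 \in A_0$, which is a worthwhile detail since it justifies that $A_0$ is a ring and $A_1$ an $A_0$-module as Definition \ref{def1} tacitly requires.
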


\begin{proof}
Let $\alpha, \beta\in A$. We write $\alpha = a+x$ and $\beta=b+y$ where  $a, b\in A_0$ and $x, y\in A_1$.
Then, 
$$\alpha{\cdot}\beta = (a+x){\cdot}(b+y)=(ab+\varphi(x, y))+(ay+bx).$$
Thus, it is enough to show that $\varphi$ satisfies conditions $(1)$ and $(2)$ of Definition \ref{def1}. This is clear because $\varphi(x, y)=x{\cdot}y$ and $A$ is a commutative ring. 
\end{proof}


By virtue of Theorem \ref{structure}, we explore $\mathbb{Z}_2$-graded rings $A=R\oplus M$ via the triad $(R, M, \varphi)$. 
\begin{setup}\label{setup2.5}
Let $R$ be a ring, $M$ be an $R$-module, and $\varphi: M\times M \to R$ be an $R$-bilinear  homomorphism satisfying the following conditions.
\begin{enumerate}[\rm(1)] 
\item $\varphi(x, y)=\varphi(y,x)$ for all $x,y\in M$.
\item $\varphi(x,y)z=\varphi(y,z)x$ for all $x,y,z\in M$.
\end{enumerate} 

Set $A=R\times_\varphi M$.
\end{setup}

We summarize the fundamental properties of $\mathbb{Z}_2$-graded ideals of $A$. We note that the case of idealizations are known in \cite[Theorem 3.3]{AW}. 

\begin{prop}\label{graded}
\begin{enumerate}[\rm(1)] 
\item The $\mathbb{Z}_2$-graded ideals of $A$ have the form $I\times N$, where $I$ is an ideal of $R$ and $N$ is an $R$-submodule $N$ of $M$ such that $\varphi(M, N)\subseteq I$ and $IM\subseteq N$.
\item 
Let $I$ be an ideal of $R$ and $N$ an $R$-submodule of $M$. Then the graded ideal of $A$ generated by $I\times 0$ and $0\times N$ are $[I + \varphi(M,N)]\times [IM + N]$.
\item 
Suppose that $J_1=I_1\times N_1$ and $J_2=I_2\times N_2$ are ideals of $A$. Then 
\begin{align*}
J_1+J_2&=[I_1+I_2]\times [N_1+ N_2]\\ 
J_1\cap J_2&=[I_1\cap I_2]\times [N_1\cap N_2]\\
J_1{\cdot}J_2&=[I_1I_2 + \varphi(N_1, N_2)] \times [I_2 N_1 + I_1N_2].
\end{align*}
\item Let  $I$ be an ideal of $R$, and let $N$ be an $R$-submodule $N$ of $M$ such that $\varphi(M, N)\subseteq I$ and $IM\subseteq N$. Then $A/(I\times N) \cong (R/I)\times_{\ol{\varphi}} (M/N)$, where $\ol{\varphi}$ denotes the canonical map $\ol{\varphi}: M/N \times M/N \to R/I$ induced from $\varphi$.
\end{enumerate}
\end{prop}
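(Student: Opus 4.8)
The plan is to handle all four parts by the standard device for $\mathbb{Z}_2$-graded ideals: decompose a graded ideal into its two homogeneous components, translate each instance of the ideal axiom into an $R$-linear statement, and then verify everything by short computations with homogeneous elements of $A$ using the multiplication rule $(a,x)\cdot(b,y)=(ab+\varphi(x,y),\,ay+bx)$ of Definition \ref{def1}.

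For (1), I would take a $\mathbb{Z}_2$-graded ideal $J$ and write $J=(J\cap A_0)\oplus(J\cap A_1)=I\times N$ with $I\subseteq R$ and $N\subseteq M$ additive subgroups. That $J$ is an ideal gives $A_0\cdot(J\cap A_0)\subseteq J\cap A_0$ and $A_0\cdot(J\cap A_1)\subseteq J\cap A_1$, i.e. $I$ is an ideal of $R$ and $N$ an $R$-submodule of $M$; it also gives $A_1\cdot(J\cap A_1)\subseteq J\cap A_0$ and $A_1\cdot(J\cap A_0)\subseteq J\cap A_1$, which via the products $(0,m)(0,n)=(\varphi(m,n),0)$ and $(0,m)(i,0)=(0,im)$ say exactly $\varphi(M,N)\subseteq I$ and $IM\subseteq N$. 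Conversely, for any such $I,N$ the product $(a,m)(i,n)=(ai+\varphi(m,n),\,an+im)$ lies in $I\times N$, so $I\times N$ is an ideal; this direction needs nothing beyond $A$ being a ring.

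For (2), I would compute the graded ideal generated by $I\times 0$ and $0\times N$ directly as $A(I\times 0)+A(0\times N)$. From $(a,m)(i,0)=(ai,im)$ one reads off $A(I\times 0)=I\times IM$, and from $(a,m)(0,n)=(\varphi(m,n),an)$ one reads off $A(0\times N)=\varphi(M,N)\times N$; summing these gives $[I+\varphi(M,N)]\times[IM+N]$, which is automatically an ideal as a sum of ideals. For (3), the formulas for $J_1+J_2$ and $J_1\cap J_2$ are immediate componentwise, since graded ideals and their sums and intersections are graded. For the product, I would use that $J_1J_2$ equals the set of finite sums of products $j_1j_2$ and is therefore generated additively by the products of homogeneous generators; from $(a_1,0)(a_2,0)=(a_1a_2,0)$, $(0,n_1)(0,n_2)=(\varphi(n_1,n_2),0)$, $(a_1,0)(0,n_2)=(0,a_1n_2)$ and $(0,n_1)(a_2,0)=(0,a_2n_1)$ the description $[I_1I_2+\varphi(N_1,N_2)]\times[I_2N_1+I_1N_2]$ falls out. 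For (4), I would take the obvious additive projection $A=R\oplus M\to(R/I)\oplus(M/N)$, $(a,m)\mapsto(\bar a,\bar m)$, whose kernel is visibly $I\times N$; the hypothesis $\varphi(M,N)\subseteq I$ is precisely what makes $\bar\varphi$ well defined, and then $\overline{ab+\varphi(m,n)}=\bar a\bar b+\bar\varphi(\bar m,\bar n)$ shows the projection is a ring homomorphism onto $(R/I)\times_{\bar\varphi}(M/N)$, so the first isomorphism theorem finishes it.

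None of this is deep. The only place requiring genuine care is part (1), where one must correctly match the four conditions — $I$ an ideal, $N$ a submodule, $\varphi(M,N)\subseteq I$, $IM\subseteq N$ — to the ideal axioms for $J$ and, in particular, keep track of which homogeneous component each product lands in. The remaining parts are bookkeeping, and I would organize the write-up so that each reduces to one or two lines of computation with the multiplication of Definition \ref{def1}.
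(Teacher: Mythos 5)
Your proposal is correct and follows exactly the route the paper intends: the paper's own proof consists of the single line ``This is straightforward to check,'' and your write-up supplies precisely that straightforward verification, decomposing into homogeneous components and unwinding the multiplication rule of Definition \ref{def1}. One small point worth noting for completeness in (4): besides $\varphi(M,N)\subseteq I$ making $\ol{\varphi}$ well defined, the hypothesis $IM\subseteq N$ is also used, since it is what makes $M/N$ an $R/I$-module and hence makes the target $(R/I)\times_{\ol{\varphi}}(M/N)$ meaningful and the second coordinate of the projection compatible with the scalar action; both hypotheses are therefore in play, not just the one you single out.
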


\begin{proof}
This is straightforward to check.
\end{proof}

\begin{rem}
\begin{enumerate}[\rm(1)] 
\item Let $\varphi=0$, that is, $A=R\times_0 M$ is the idealization of $M$. Then, the ideal $0\times M$ of $A$ satisfies $(0\times M)^2=0$. It follows that $\Spec A = \{ \fkp\times M \mid \fkp \in  \Spec R  \}$ (\cite[Theorem 3.2]{AW}). This presentation  does not hold in general for $\mathbb{Z}_2$-graded rings (see, for example, Example \ref{ex48!}(1)).
\item In general, $0\times M$ is not an ideal of $A=R\times_\varphi M$ (see Proposition \ref{graded}(2)). In particular, the map $A\to R;(a,x)\mapsto a$ is not a  homomorphism of rings.
\end{enumerate}
\end{rem}



Let $K$ be an $R$-module. We next explore the $A$-action of an $R$-module 
\[
\Hom_R(A, K)\cong \Hom_R(M, K) \oplus K. 
\]
In Section \ref{s5} and Subsection \ref{subsection42}, we apply these results to construct the canonical module of $A$ from that of $R$ (Proposition \ref{p48}). 
Set $L:=\Hom_R(M, K) \oplus K$.
For $x\in M$ and $k\in K$, we set an $R$-linear homomorphism 
\[
\psi_{x, k}: M \to K;\  y\mapsto \varphi(x, y)k.
\]

\begin{prop}\label{dualaction}
Define the $A$-action of $L:=\Hom_R(M, K) \oplus K$ as
\[
(a, x){\cdot}(f, k):=(af + \psi_{x, k}, f(x) + ak)
\]
for $(a, x)\in A$ and $(f, k)\in L$. Then, $L$ is an $A$-module with the above action.
\end{prop}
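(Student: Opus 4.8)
The statement is that the formula
\[
(a,x)\cdot(f,k) = (af + \psi_{x,k},\ f(x) + ak)
\]
endows $L = \Hom_R(M,K)\oplus K$ with the structure of an $A$-module, where $A = R\times_\varphi M$. Since $L$ is already an $R$-module (hence an abelian group) and the formula is visibly additive and biadditive in each slot, the only nontrivial point is associativity of the action: we must check
\[
\bigl[(a,x)\cdot(b,y)\bigr]\cdot(f,k) = (a,x)\cdot\bigl[(b,y)\cdot(f,k)\bigr]
\]
for all $(a,x),(b,y)\in A$ and $(f,k)\in L$, together with the trivial identity $(1,0)\cdot(f,k) = (f,k)$, which is immediate. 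So the proof reduces to a single (if slightly tedious) bilinear-map computation, and I would simply carry it out component by component.

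First I would expand the left-hand side. Using the multiplication in $A$, $(a,x)\cdot(b,y) = (ab+\varphi(x,y),\ ay+bx)$, so
\[
\bigl[(a,x)\cdot(b,y)\bigr]\cdot(f,k) = \bigl((ab+\varphi(x,y))f + \psi_{ay+bx,\,k},\ f(ay+bx) + (ab+\varphi(x,y))k\bigr).
\]
Then I would expand the right-hand side: $(b,y)\cdot(f,k) = (bf+\psi_{y,k},\ f(y)+bk)$, and hence
\[
(a,x)\cdot\bigl[(b,y)\cdot(f,k)\bigr] = \bigl(a(bf+\psi_{y,k}) + \psi_{x,\,f(y)+bk},\ (bf+\psi_{y,k})(x) + a(f(y)+bk)\bigr).
\]
Now compare the two components. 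In the $K$-component, both sides yield $af(y)+bf(x)+\varphi(x,y)k$ once one uses $\psi_{y,k}(x) = \varphi(y,x)k = \varphi(x,y)k$ (symmetry, Setup \ref{setup2.5}(1)) and $R$-linearity of $f$; so that component matches directly. In the $\Hom_R(M,K)$-component, the $R$-linear terms $abf$ agree, the terms $a\psi_{y,k}$ and $\psi_{x,bk}$ match after noting $\psi_{x,bk} = b\,\psi_{x,k}$ and (by symmetry) comparing against $\psi_{bx,k}$, and the crucial remaining equality is
\[
\varphi(x,y)\,f(z)k \;+\; b\,\psi_{x,k}(z) \;\overset{?}{=}\; \psi_{ay+bx,k}(z) \ \text{-type terms},
\]
which upon unwinding amounts precisely to the identity $\varphi(x,y)z = \varphi(y,z)x$ evaluated under $f$ and scaled by $k$ — that is, condition (2) of Setup \ref{setup2.5} is exactly what is needed to make the two $\Hom$-components agree.

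The main (and essentially only) obstacle is bookkeeping: keeping track of which of the two structural axioms on $\varphi$ is invoked at which step. Symmetry of $\varphi$ handles the $K$-component and the ``swap'' between $\psi_{x,k}$-type terms, while the associativity-type axiom $\varphi(x,y)z = \varphi(y,z)x$ is what reconciles the $\Hom$-component; neither identity is superfluous. Beyond that the verification is routine, so I would present it compactly: state that additivity and the unit axiom are immediate, then display the two expansions above and observe that they coincide termwise by Setup \ref{setup2.5}(1) and (2).
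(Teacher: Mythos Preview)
Your proposal is correct and follows essentially the same approach as the paper: dismiss the additive and unit axioms as routine, then verify associativity by expanding both sides and matching components, using symmetry of $\varphi$ for the $K$-component and the axiom $\varphi(x,y)z=\varphi(y,z)x$ for the $\Hom$-component. The paper is a bit more explicit in the $\Hom$-component check---it evaluates both sides at a test element $y\in M$ and displays the full chain of equalities---whereas your account of that step is somewhat telegraphic; but the argument is the same.
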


\begin{proof}
Let $(a, x), (a_1, x_1), (a_2, x_2)\in A$ and $(f, k), (f_1, k_1), (f_2, k_2)\in L$. It is routine to check the assertions that 
\begin{align*}
[(a_1, x_1) + (a_2, x_2)]{\cdot}(f, k) &= (a_1, x_1){\cdot}(f, k) +  (a_2, x_2){\cdot}(f, k),\\
(a, x){\cdot}[(f_1, k_1) + (f_2, k_2)] &= (a, x){\cdot}(f_1, k_1)  + (a, x){\cdot}(f_2, k_2),\\
(1, 0){\cdot}(f, k)&=(f, k).
 \end{align*}
The rest is to prove that $(a_1, x_1)[(a_2, x_2){\cdot}(f,k)] = [(a_1, x_1)(a_2, x_2)]{\cdot}(f,k)$. The left hand side of the equation is 
\begin{align*}
(a_1(a_2 f + \psi_{x_2, k}) + \psi_{x_1, f(x_2) + a_2 k},\ (a_2f + \psi_{x_2, k})(x_1) + a_1(f(x_2) + a_2k)).
\end{align*}
On the other hand, the right hand side of the equation is 
\begin{align*}
((a_1a_2 + \varphi(x_1, x_2))f + \psi_{a_1x_2 + a_2 x_1, k},\ f(a_1x_2 + a_2x_1) + (a_1a_2 + \varphi(x_1, x_2))k).
\end{align*}
Hence, by noting that $\psi_{x_2, k}(x_1) = \varphi(x_2, x_1)k = \varphi(x_1, x_2)k$, we only need to check that 
\[
a_1 \psi_{x_2, k} + \psi_{x_1, f(x_2) + a_2 k} = \varphi(x_1, x_2)f + \psi_{a_1x_2 + a_2 x_1, k}.
\]
Let $y\in M$. Then, 
\begin{align*}
(a_1 \psi_{x_2, k} + \psi_{x_1, f(x_2) + a_2 k})(y)=& a_1\varphi(x_2, y)k + \varphi(x_1, y)(f(x_2)+a_2 k)\\
=&  \varphi(x_1, y)f(x_2) + (a_1\varphi(x_2, y) +  a_2\varphi(x_1, y))k\\
=&  f(\varphi(x_1, y)x_2) + \varphi(a_1x_2 + a_2 x_1, y)k\\
=&  f(\varphi(x_1, x_2)y) + \varphi(a_1x_2 + a_2 x_1, y)k\\
=&\varphi(x_1, x_2)f(y) + \varphi(a_1x_2 + a_2 x_1, y)k\\
=& (\varphi(x_1, x_2)f + \psi_{a_1x_2 + a_2 x_1, k})(y),
\end{align*}
where the fourth equality follows from Definition \ref{def1}(2). Therefore, the assertion holds.
\end{proof}

\begin{prop}\label{p33}
Let $L$ be an $A$-module with the action in Proposition \ref{dualaction}. Let $\iota: M \to A;\ x\mapsto (0, x)$ be an embedding. 
Then 
\[
\zeta: \Hom_R(A, K) \to L;\ \alpha \mapsto (\alpha\circ \iota, \alpha(1,0))
\]
is an isomorphism as $A$-modules.
\end{prop}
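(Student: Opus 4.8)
The plan is to verify directly that the map $\zeta$ is a well-defined homomorphism of $A$-modules and then exhibit an explicit two-sided inverse. First I would observe that $\zeta$ is visibly $R$-linear and bijective as a map of $R$-modules: indeed, the natural $R$-module isomorphism $\Hom_R(A, K) \cong \Hom_R(R \oplus M, K) \cong K \oplus \Hom_R(M, K)$ sends $\alpha$ to the pair $(\alpha(1,0), \alpha|_{0\times M})$, and up to reordering the summands this is exactly $\zeta$. So the only real content is that $\zeta$ intertwines the $A$-actions, where on the source $\Hom_R(A, K)$ carries the standard $A$-module structure $((a,x)\cdot\alpha)(\beta) = \alpha((a,x)\cdot\beta)$ coming from the ring multiplication in $A$, and on the target $L$ carries the action defined in Proposition \ref{dualaction}.

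The key step is therefore the computation: fix $(a, x) \in A$ and $\alpha \in \Hom_R(A, K)$, and check that
\[
\zeta\bigl((a,x)\cdot\alpha\bigr) = (a, x)\cdot\zeta(\alpha) = (a,x)\cdot(\alpha\circ\iota,\ \alpha(1,0)).
\]
The second component of the left side is $((a,x)\cdot\alpha)(1,0) = \alpha((a,x)\cdot(1,0)) = \alpha(a, x)$, which I would rewrite using additivity and the grading of $\alpha$ as $\alpha(a,0) + \alpha(0,x) = a\,\alpha(1,0) + (\alpha\circ\iota)(x)$; comparing with the formula in Proposition \ref{dualaction} this is exactly the second component $f(x) + ak$ with $f = \alpha\circ\iota$ and $k = \alpha(1,0)$. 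For the first component, I evaluate $((a,x)\cdot\alpha)\circ\iota$ at an arbitrary $y \in M$: this equals $\alpha\bigl((a,x)\cdot(0,y)\bigr) = \alpha\bigl(\varphi(x,y),\ ay\bigr) = \varphi(x,y)\,\alpha(1,0) + a\,\alpha(0,y) = \psi_{x, \alpha(1,0)}(y) + a\,(\alpha\circ\iota)(y)$, which matches the first component $af + \psi_{x,k}$ in Proposition \ref{dualaction}. This establishes $A$-linearity, and combined with the bijectivity already noted, $\zeta$ is an isomorphism of $A$-modules.

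I do not expect any serious obstacle here: the statement is essentially bookkeeping, and the one point requiring a little care is using the correct $A$-module structure on $\Hom_R(A,K)$ (the one induced by the ring structure of $A$, not a naive componentwise action) and keeping track of the fact that $\alpha$ need not respect the decomposition $A = R \oplus M$ multiplicatively but does respect it additively, so that $\alpha(a,x) = \alpha(a,0) + \alpha(0,x)$. The inverse map is simply $(f, k) \mapsto \bigl((a,x) \mapsto ak + f(x)\bigr)$, and one checks it lands in $\Hom_R(A,K)$ and is $R$-linear, which is routine; $R$-linearity of $\zeta$ and of this candidate inverse, together with them being mutually inverse on the underlying sets, finishes the argument. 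The main thing to be careful about in writing it up is simply not to drop the $\varphi$-term when computing $(a,x)\cdot(0,y)$ inside $A$.
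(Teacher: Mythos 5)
Your proposal is correct and follows essentially the same route as the paper: reduce to checking $A$-linearity of the $R$-module isomorphism $\zeta$, then verify the identity $\zeta((a,x)\cdot\alpha) = (a,x)\cdot\zeta(\alpha)$ componentwise using $(a,x)(1,0)=(a,x)$ and $(a,x)(0,y)=(\varphi(x,y),ay)$. The only difference is that you spell out the first-component computation that the paper dismisses as ``routine to check,'' which is a welcome addition.
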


\begin{proof}
Since $\zeta$ is an isomorphism as $R$-modules, it is enough to show that 
\begin{align}\label{eq291}
(a, x)\cdot \zeta (\alpha)=\zeta ((a, x) \alpha)
\end{align}
for all $(a, x)\in A$ and $\alpha\in \Hom_R(A, K)$. Indeed, the left hand side of \eqref{eq291} is 
$$(a(\alpha \circ \iota)+\psi_{x, \alpha(1, 0)}, \alpha(a, x) ),$$
and the right hand side of \eqref{eq291} is 
$$(((a, x)\alpha)\circ \iota, \alpha (a, x)).$$
It is routine to check the assertion 
$$(a(\alpha \circ \iota)+\psi_{x, \alpha(1, 0)})(y)=(((a, x)\alpha)\circ \iota)(y)$$
for all $y\in M$.
\end{proof}

\section{The Noetherian, Artinian, and Cohen-Macaulay properties of $\mathbb{Z}_2$-graded local rings}\label{s3}
In this section we explore the Noetherian, Artinian, and Cohen-Macaulay properties of $\mathbb{Z}_2$-graded rings. 
As in Section \ref{s2}, the results presented in this section are essentially known (cf. \cite{AW, C}), but we reestablish them here using our own method.



\begin{rem}\label{integral}
$A$ is integral over $R$ because $(0, x)^2=(\varphi(x, x), 0) \in R\times 0$ for all $x\in M$. In particular, $\dim A=\dim R$ (see \cite[Exercise 9.2]{Mat}). 
\end{rem}

\begin{prop}\label{noether} 
The following are equivalent.
\begin{enumerate}[\rm(1)] 
\item $A$ is a Noetherian ring $($resp. an Artinian ring$)$.
\item $R$ is a Noetherian ring $($resp. an Artinian ring$)$ and $M$ is a finitely generated $R$-module. 
\end{enumerate}
\end{prop}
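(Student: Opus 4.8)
The plan is to prove Proposition \ref{noether} by exploiting the fact that $A = R \times_\varphi M$ contains $R \times 0$ as a subring (with $R \cong (R\times 0)$ as rings via $a \mapsto (a,0)$), so that $A$ is an $R$-algebra which, as an $R$-module, decomposes as $R \oplus M$. The key structural observation is that the ideal-theoretic and module-theoretic behavior of $A$ is governed by the $R$-module structure on $R \oplus M$, together with Remark \ref{integral}, which tells us $A$ is integral (indeed module-finite when $M$ is finitely generated) over $R$.

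First I would prove the implication $(2) \Rightarrow (1)$. If $R$ is Noetherian and $M$ is finitely generated over $R$, then $A = R \oplus M$ is a finitely generated $R$-module, hence a Noetherian $R$-module; since every ideal of $A$ is in particular an $R$-submodule of $A$, every ideal of $A$ is finitely generated as an $R$-module and a fortiori as an $A$-module, so $A$ is Noetherian. For the Artinian case, if $R$ is Artinian then $R$ has finite length as an $R$-module, and a finitely generated module $M$ over an Artinian ring also has finite length, so $A = R \oplus M$ has finite length as an $R$-module; a fortiori $A$ has finite length as an $A$-module (any chain of $A$-submodules is a chain of $R$-submodules), hence $A$ is Artinian. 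One can equivalently invoke the Eakin--Nagata theorem together with Remark \ref{integral}, but the direct length/finite-generation argument is cleaner and self-contained.

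Next I would prove $(1) \Rightarrow (2)$. Suppose $A$ is Noetherian. The quotient $A/(0 \times M)'$ is awkward because, as the authors note, $0 \times M$ need not be an ideal; instead I would use that $J := 0 \times (M + 0)$ generates, together with nothing from $R$, the graded ideal $\varphi(M,M) \times M$ by Proposition \ref{graded}(2), and more usefully that $\mathfrak{A} := \varphi(M,M) \times M$ is an ideal of $A$ with $A/\mathfrak{A} \cong (R/\varphi(M,M)) \times_{\bar\varphi} 0 \cong R/\varphi(M,M)$ as rings, by Proposition \ref{graded}(4). Actually the cleanest route: the ideal $\mathfrak{B} := 0 \times M$ is not an ideal, so instead consider the subring $R \times 0 \cong R$; a subring of a Noetherian ring need not be Noetherian, so this alone does not suffice. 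The right move is to use that $A$ is a Noetherian ring containing $R$ as a subring over which $A$ is integral, and that $A = R + RM$ (as $R$-module, $A = R \oplus M$); then any ideal $\mathfrak{a}$ of $R$ extends to $\mathfrak{a}A = \mathfrak{a} \oplus \mathfrak{a}M$ (using Proposition \ref{graded}(2), since $\varphi(M, \mathfrak{a}M) = \mathfrak{a}\varphi(M,M) \subseteq \mathfrak{a}$), and $\mathfrak{a}A \cap R = \mathfrak{a}$ because the degree-zero component of $\mathfrak{a} \oplus \mathfrak{a}M$ is exactly $\mathfrak{a}$; since $A$ is Noetherian, the ascending chain condition passes from ideals of $A$ to ideals of $R$ via $\mathfrak{a} \mapsto \mathfrak{a}A$, proving $R$ is Noetherian. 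For $M$: the $A$-submodules of $A$ of the form $0 \times N$ need not exist (again $0 \times N$ is rarely a submodule), so I would instead observe that $M \cong A/(R \times 0)$ as $R$-modules --- wait, $R \times 0$ is a subring not an ideal --- the correct statement is that $M$ is isomorphic as an $R$-module to the quotient $R$-module $A/(R \oplus 0)$, and since $A$ is a Noetherian ring it is in particular a Noetherian $R$-module (every $R$-submodule of $A$ is an $A$-submodule? no), so this needs care. The clean fix: $M$, with its $R$-module structure, is a homomorphic image of the $R$-module $A$ via the $R$-linear projection $A = R \oplus M \to M$; since $A$ is Noetherian as a ring and integral over $R$, and $R$ is Noetherian (just shown), $A$ is a finitely generated $R$-algebra? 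Not obviously. So instead: $A$ is a Noetherian ring, $R$ is Noetherian, and $A$ is integral over $R$; by a standard consequence (e.g., \cite[Exercise 9.2]{Mat} combined with the fact that a Noetherian ring integral over a Noetherian subring is module-finite is \emph{false} in general) --- hence I would argue directly that the ideal $\mathfrak{B}^2 = (0 \times M)(0\times M) = \varphi(M,M) \times 0$ sits inside $R \times 0$, and the $R$-submodule $0 \times M$ of the $R$-module $A$, though not an ideal of $A$, satisfies: the graded ideal generated by $0 \times M$ equals $\varphi(M,M) \times M =: I \times M$, which is a Noetherian $A$-module (ideal of Noetherian ring), hence a Noetherian $R$-module, hence $M \cong (I \times M)/(I \times 0)$ is a Noetherian $R$-module, so finitely generated. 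This is the step I expect to be the main obstacle --- namely, carefully extracting finite generation of $M$ as an $R$-module from Noetherianness of $A$ despite $0 \times M$ not being an ideal --- and the device of passing to the genuine graded ideal $\varphi(M,M) \times M$ and its sub-quotient is how I would resolve it. The Artinian case follows by the identical arguments with "ascending chain condition on ideals/submodules" replaced by "descending chain condition", using that $\mathfrak{a} \mapsto \mathfrak{a}A$ and the sub-quotient descriptions are order-preserving.
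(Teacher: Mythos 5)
Your $(2)\Rightarrow(1)$ and your proof that $R$ is Noetherian assuming $(1)$ are correct and essentially the same as the paper's (the latter via the contraction $\fka A\cap R=\fka$). The genuine gap is in the step where you claim that the ideal $\varphi(M,M)\times M$ of $A$, being a Noetherian $A$-module, is ``hence a Noetherian $R$-module.'' That implication is false: Noetherianity of a module does not descend along a ring extension (for instance $\mathbb{Q}$ is a Noetherian $\mathbb{Q}$-module but not a Noetherian $\mathbb{Z}$-module). You cannot appeal to $A$ being module-finite over $R$ here, since that is equivalent to the very thing you are trying to prove, namely finite generation of $M$; nor does integrality of $A$ over $R$ (Remark \ref{integral}) rescue it, since a Noetherian ring integral over a Noetherian subring need not be module-finite over it (Krull--Akizuki-type examples). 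Consequently, the passage to the subquotient $(I\times M)/(I\times 0)$ never gets off the ground, because you have not established that $I\times M$ is Noetherian in the $R$-module category.

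The fix, and what the paper actually does, is to apply your idea to each member of a chain rather than to a single ideal. Given an ascending chain $M_1\subseteq M_2\subseteq\cdots$ of $R$-submodules of $M$, the graded ideals
\[
(0\times M_i)A=\varphi(M_i\times M)\times M_i
\]
of $A$ (Proposition \ref{graded}(2)) form an ascending chain of ideals of $A$, which stabilizes since $A$ is Noetherian; reading off the degree-one component of $\varphi(M_i\times M)\times M_i$ recovers $M_i$ exactly, so the chain of $M_i$ stabilizes as well. This shows directly that $M$ is a Noetherian $R$-module, without ever needing the false intermediate claim. Your diagnosis that one must pass from $0\times M_i$ (not an ideal) to the genuine ideal $\varphi(M_i\times M)\times M_i$ is exactly right; the missing move was to run this at the level of the chain itself. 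The Artinian case then follows either by the same chain argument with ACC replaced by DCC, or, as the paper does, by noting that $A$ Artinian implies $A$ (hence $R$) Noetherian together with $\dim R=\dim A=0$.
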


\begin{proof}
$(2) \Rightarrow (1)$: Suppose that $R$ is a Noetherian ring and $M$ is a finitely generated $R$-module. Then, $A$ is a module finite extension of $R$, so that $A$ is also a Noetherian ring. 
Suppose that $R$ is an Artinian ring and $M$ is a finitely generated $R$-module. Then $A$ is Noetherian and  $\dim A =\dim R=0$ by Remark \ref{integral}. Hence, $A$ is Artinian.

$(1) \Rightarrow (2)$: Suppose that $A$ is a Noetherian ring. Take a chain 
$$I_1\subseteq I_2 \subseteq \cdots \subseteq I_i \subseteq \cdots \subseteq R$$
of ideals of $R$. Note that $I_i A\cap R=I_i$ for each $i>0$ (see Proposition \ref{graded}(2)). Since $I_iA$ is an ideal of $A$, $I_i A=I_{i+1} A$ for all $i\gg 0$. It follows that $R$ is a Noetherian ring. 
Similarly, take a chain
$$M_1\subseteq M_2 \subseteq \cdots \subseteq M_i \subseteq \cdots \subseteq M$$
of $R$-submodules of $M$. Since $(0\times M_i) A=\varphi(M_i\times M)\times M_i$ for each $i>0$, we obtain the chain 
$$\varphi(M_1\times M)\times M_1\subseteq \varphi(M_2\times M)\times M_2 \subseteq \cdots \subseteq \varphi(M_i\times M)\times M_i \subseteq \cdots \subseteq A$$
of ideals in $A$. Therefore, $M_{i}=M_{i+1}$ for all $i\gg 0$ because $A$ is a Noetherian ring. Thus, $M$ is a Noetherian $R$-module. 

If $A$ is an Artinian ring, then so is $R$ since $\dim A=\dim R$.
\end{proof}


\begin{prop}\label{p34}
Let $R$ be a ring. Suppose that $M$ is a free $R$-module of rank $\ell>0$ with a free basis $\mathbf{e}_1, \dots, \mathbf{e}_\ell$. Then $\varphi$ has one of the following forms.
\begin{enumerate}[\rm(1)] 
\item If $\ell\ge 2$, then $\varphi=0$, i.e. $A$ is the idealization of $M$ in the sense of \cite{AW}.
\item If $\ell=1$, then $\varphi: R\times R \to R;\ (x \mathbf{e}_1,y\mathbf{e}_1)\mapsto axy$, where $a=\varphi(\mathbf{e}_1, \mathbf{e}_1)$ and $x,y \in R$. We then have $A\cong R[X]/(X^2-a)$ as rings.
\end{enumerate}
\end{prop}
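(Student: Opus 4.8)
The plan is to analyze the bilinear map $\varphi$ by evaluating it on the free basis and exploiting condition (2) of Setup \ref{setup2.5}, namely $\varphi(x,y)z = \varphi(y,z)x$ for all $x,y,z \in M$. First I would write $a_{ij} := \varphi(\mathbf{e}_i, \mathbf{e}_j) \in R$; by symmetry (condition (1)) we have $a_{ij} = a_{ji}$, and bilinearity reduces everything to knowing these scalars. The key leverage is condition (2): applied to the basis vectors it reads $a_{ij}\mathbf{e}_k = a_{jk}\mathbf{e}_i$ for all $i,j,k$. Since $\mathbf{e}_1, \dots, \mathbf{e}_\ell$ is a free basis, whenever $i \neq k$ comparing coefficients forces $a_{ij} = 0$ (the coefficient of $\mathbf{e}_i$ on the left is $0$ when $i \neq k$) and likewise $a_{jk} = 0$.

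For part (1), suppose $\ell \ge 2$. Fix any $i, j$. Choose $k \notin \{i\}$ (possible since $\ell \ge 2$); then from $a_{ij}\mathbf{e}_k = a_{jk}\mathbf{e}_i$ and $i \neq k$, freeness gives $a_{ij} = 0$. As $i,j$ were arbitrary, all $a_{ij} = 0$, hence $\varphi = 0$ by bilinearity, and $A = R \times_0 M$ is the idealization in the sense of \cite{AW} by Remark \ref{r25}. For part (1) there is really nothing subtle: the only mild point is checking that one can always pick an index $k$ distinct from a prescribed $i$, which holds precisely because $\ell \ge 2$ — and this is exactly why the statement fails for $\ell = 1$.

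For part (2), let $\ell = 1$ and $a := \varphi(\mathbf{e}_1, \mathbf{e}_1)$. Then for $x = x\mathbf{e}_1$ and $y = y\mathbf{e}_1$ with $x,y \in R$, bilinearity gives $\varphi(x\mathbf{e}_1, y\mathbf{e}_1) = axy$; one checks conditions (1) and (2) of Definition \ref{def1} are automatically satisfied (condition (2) becomes $axyz = ayzx$, which is commutativity in $R$). Now I would define a map $R[X] \to A$ by $X \mapsto (0, \mathbf{e}_1)$ and $r \mapsto (r, 0)$ for $r \in R$. Since $(0,\mathbf{e}_1)^2 = (\varphi(\mathbf{e}_1,\mathbf{e}_1), 0) = (a, 0)$, the element $X^2 - a$ lies in the kernel, so the map factors through $R[X]/(X^2 - a) \to A$. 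This induced map is an isomorphism because both sides are free $R$-modules with bases $\{1, X\}$ and $\{(1,0), (0,\mathbf{e}_1)\}$ respectively, and the map sends one basis to the other.

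I do not expect any genuine obstacle here; the proposition is a structural bookkeeping result. The only place demanding a moment of care is the derivation $a_{ij}\mathbf{e}_k = a_{jk}\mathbf{e}_i \Rightarrow a_{ij} = 0$ for $\ell \ge 2$: one must be careful that the conclusion uses $i \neq k$ (so that $\mathbf{e}_i$ and $\mathbf{e}_k$ are distinct basis elements and their coefficients can be compared independently), and that such a $k$ exists exactly under the hypothesis $\ell \ge 2$. Everything else — symmetry, bilinearity, and the ring isomorphism in (2) — is routine verification.
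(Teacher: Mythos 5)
Your proposal is correct and follows essentially the same route as the paper: for $\ell \ge 2$ you apply condition (2) of Setup \ref{setup2.5} on basis vectors and use freeness to conclude each $\varphi(\mathbf{e}_i,\mathbf{e}_j)=0$, and for $\ell=1$ you use the surjection $R[X]\to A$, $X\mapsto(0,\mathbf{e}_1)$, with kernel $(X^2-a)$. The only difference is that you spell out the isomorphism $R[X]/(X^2-a)\cong A$ via comparing free bases, which the paper leaves as ``straightforward to check.''
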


\begin{proof}
Suppose that $\ell\ge 2$. For each $1\le i \le \ell$, there exists $1\le k_i \le \ell$ such that $k_i\ne i$. Hence, the definition of $\varphi$ requests that
\[
\varphi(\mathbf{e}_i, \mathbf{e}_j)\mathbf{e}_{k_i} = \varphi(\mathbf{e}_j, \mathbf{e}_{k_i})\mathbf{e}_i
\]
for all $1\le i,j \le \ell$. This implies that $\varphi(\mathbf{e}_i, \mathbf{e}_j)=0$ since $k_i\ne i$. It follows that $\varphi=0$.

Suppose that $\ell=1$ and set $a=\varphi(\mathbf{e}_1, \mathbf{e}_1)$. Then we have 
\[
\varphi(x\mathbf{e}_1, y\mathbf{e}_1)=xy\varphi(\mathbf{e}_1, \mathbf{e}_1)=axy
\]
for all $x,y \in R$. Let $R[X]$ be the polynomial ring over $R$. We have a surjective homomorphism of $R$-algebras $\psi: R[X] \to A; \ X\mapsto (0,\mathbf{e}_1)$. Then, it is straightforward to check that $\Ker \psi=(X^2-a)$.
\end{proof}

\begin{thm} \label{rank} 
Let $R$ be a Noetherian ring and $M$ a finitely generated $R$-module of rank $\ell>0$. Then the following hold true.
\begin{enumerate}[\rm(1)] 
\item If $\ell \ge 2$, then $\varphi=0$, i.e. $A$ is the idealization of $M$ in the sense of \cite{AW}.
\item Assume that $\ell=1$ and $M=I$ where $I$ is an ideal of $R$. Then 
\[
\varphi: I\times I \to R;\ \ (x,y)\mapsto \alpha xy
\] 
for some element $\alpha\in R:I^2$ $(\subseteq \rmQ(R))$. 
\end{enumerate}
\end{thm}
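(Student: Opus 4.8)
The plan is to reduce both statements to the free–module case already settled in Proposition~\ref{p34}, by localizing at the set $S$ of non‑zerodivisors of $R$ and descending identities along the canonical injection $R\hookrightarrow \rmQ(R)$.

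\emph{Part (1).} Localizing $\varphi$ at $S$ produces a $\rmQ(R)$‑bilinear map $S^{-1}\varphi\colon S^{-1}M\times S^{-1}M\to \rmQ(R)$ which still satisfies conditions (1) and (2) of Definition~\ref{def1}, since these are preserved under base change. By the definition of rank, $S^{-1}M\cong M\otimes_R\rmQ(R)$ is a free $\rmQ(R)$‑module of rank $\ell\ge 2$, so Proposition~\ref{p34}(1) applied over the ring $\rmQ(R)$ forces $S^{-1}\varphi=0$. As $R\to \rmQ(R)$ is injective, $\varphi(x,y)=0$ for all $x,y\in M$; hence $\varphi=0$ and $A=R\times_0 M$ is the idealization in the sense of \cite{AW}.

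\emph{Part (2).} First I would record that a rank‑one ideal $I$ contains a non‑zerodivisor of $R$: otherwise $I$ lies in the set of zerodivisors, hence by prime avoidance in some $\fkp\in\Ass R$, and then $I_\fkp$, being a free rank‑one $R_\fkp$‑module contained in $\fkp R_\fkp$, would force $\fkp R_\fkp$ to contain a non‑zerodivisor of $R_\fkp$ — impossible, since $\fkp R_\fkp$ consists of zerodivisors. Fix such $f\in I$ and set $\alpha:=\varphi(f,f)/f^2\in \rmQ(R)$. Condition (2) of Definition~\ref{def1} applied to the triple $(x,f,f)$ reads $\varphi(x,f)f=\varphi(f,f)x$, so dividing by the non‑zerodivisor $f$ and substituting $\varphi(f,f)=\alpha f^2$ gives $\varphi(x,f)=\alpha f x$ in $\rmQ(R)$ for every $x\in I$. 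Applying condition (2) again to $(x,y,f)$ gives $\varphi(x,y)f=\varphi(y,f)x=\alpha f\,xy$, and dividing by $f$ yields $\varphi(x,y)=\alpha xy$ for all $x,y\in I$. Since then $\alpha xy=\varphi(x,y)\in R$ and $I^2$ is generated as an $R$‑module by the products $xy$ with $x,y\in I$, we conclude $\alpha I^2\subseteq R$, i.e. $\alpha\in R:I^2$.

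The only point requiring care is the descent in Part (1): one must check that bilinearity and conditions (1) and (2) survive localization (routine) and that $R\to\rmQ(R)$ is injective, so that an identity verified in $\rmQ(R)$ holds in $R$. In Part (2) the computation is just the elementary manipulation of condition (2) already used in the proof of Proposition~\ref{p34}(2), supplemented by the standard fact that a rank‑one ideal contains a non‑zerodivisor; I anticipate no genuine obstacle beyond this bookkeeping.
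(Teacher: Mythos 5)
Your proof is correct and follows essentially the same route as the paper: both arguments reduce to the free case of Proposition~\ref{p34} by passing to $\rmQ(R)\otimes_R M$ and descending along the injection $R\hookrightarrow\rmQ(R)$, and your Part~(1) is identical to the paper's. In Part~(2) you differ slightly in style: rather than invoking Proposition~\ref{p34}(2) on the localized free module, you work directly with condition~(2) of Definition~\ref{def1} applied to the triples $(x,f,f)$ and $(x,y,f)$ and cancel the non-zerodivisor $f$ in $\rmQ(R)$, which is a clean and somewhat more elementary derivation of $\varphi(x,y)=\alpha xy$; you also supply a justification (which the paper only asserts) that a rank-one ideal contains a non-zerodivisor.
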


\begin{proof}
For the map $\varphi$, we consider the map
\[
\rmQ(R)\otimes_R \varphi: (\rmQ(R)\otimes_R M) \times (\rmQ(R)\otimes_R M) \to \rmQ(R); \ (x/s, y/t)\mapsto \varphi(x,y)/st,
\]
where $x,y\in M$ and $s,t\in R$ is a non-zerodivisor of $R$. Then one can check that $\rmQ(R)\otimes_R \varphi$ also satisfies the assertions to define the idealization of $\rmQ(R)\otimes_R M$ with respect to $\rmQ(R)\otimes_R \varphi$. Note that $\rmQ(R)\otimes_R M\cong \rmQ(R)^{\oplus \ell}$ since $M$ is of rank $\ell$.
If $\ell\ge 2$, then $\rmQ(R)\otimes_R \varphi=0$ by Proposition \ref{p34}. It follows that for all $x,y\in M$, $\varphi(x,y)/1=\rmQ(R)\otimes_R \varphi(x/1,y/1)=0$. Hence, we obtain that $\varphi(x,y)=0$, i.e. $\varphi=0$.

Suppose that $\ell=1$ and $M=I$ is an ideal of $R$. Note that $I$ contains a non-zerodivisor $w\in I$ of $R$ since $M=I$ is of rank $1$. Set $\alpha=\rmQ(R)\otimes_R \varphi(w/w,w/w)$. Then, for all $x,y\in I$, we obtain that 
\begin{align*}
\varphi(x,y)/1=\rmQ(R)\otimes_R \varphi(x/1,y/1)=\rmQ(R)\otimes_R \varphi(wx/w,wy/w)=xy\rmQ(R)\otimes_R \varphi(w/w,w/w)=\alpha xy.
\end{align*}
\end{proof}


The following is useful to verify the local condition of $A$.


\begin{prop}\label{local} 
We consider the following conditions.

\begin{enumerate}[$(1)$]
\item
$A$ is a local ring.
\item
$R$ is a local ring. 
\end{enumerate}
Then, the implication $(1)\Rightarrow(2)$ holds true, and the converse $(2)\Rightarrow(1)$ also holds if $\varphi(M\times M)$ does not contain any unit of $R$.

\end{prop}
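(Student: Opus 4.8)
\textbf{Proof proposal for Proposition \ref{local}.}

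The plan is to analyze the structure of maximal ideals of $A=R\times_\varphi M$ via the $\mathbb{Z}_2$-graded structure and the integrality of $A$ over $R$ (Remark \ref{integral}). For the implication $(1)\Rightarrow(2)$, I would argue as follows. Since $A$ is integral over $R$, lying over holds, so every maximal ideal of $R$ is the contraction of a prime of $A$; in fact, since $A/\fkn A$ is integral over the field $R/\fkn$ for a maximal ideal $\fkn$ of $R$ (and $A/\fkn A$ is nonzero, as $1\notin \fkn A \cap R = \fkn$ by Proposition \ref{graded}(2)), any maximal ideal of $A/\fkn A$ contracts to $\fkn$. Thus distinct maximal ideals of $R$ would lie under distinct maximal ideals of $A$, forcing $R$ to be local once $A$ is. Alternatively and more concretely: if $\fkn_1,\fkn_2$ are distinct maximal ideals of $R$, then $\fkn_1 A$ and $\fkn_2 A$ are proper ideals of $A$ with $\fkn_i A \cap R = \fkn_i$ by Proposition \ref{graded}(2), so they are contained in distinct maximal ideals of $A$, contradicting that $A$ is local.

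For the converse $(2)\Rightarrow(1)$ under the hypothesis that $\varphi(M\times M)$ contains no unit of $R$, I would show that $\fkn\times M$ is the unique maximal ideal of $A$, where $\fkn=\fkm$ is the maximal ideal of $R$. First, $\fkn\times M$ is an ideal of $A$ by Proposition \ref{graded}(1): we need $\varphi(M,M)\subseteq \fkn$, which is exactly the hypothesis that $\varphi(M\times M)$ contains no unit (so lies in $\fkn$), and $\fkn M\subseteq M$ trivially. Next, $A/(\fkn\times M)\cong (R/\fkn)\times_{\ol\varphi}(M/M)=(R/\fkn)\times_{\ol\varphi} 0 = R/\fkn$ by Proposition \ref{graded}(4), which is a field, so $\fkn\times M$ is maximal. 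Finally I would show every element $(a,x)\in A\setminus(\fkn\times M)$ is a unit: here $a\in R\setminus\fkn=R^\times$, and I claim $(a,x)$ is invertible. One can write down the inverse directly, or note that $(a,x)=(a,0)(1,a^{-1}x)$ with $(a,0)$ clearly a unit, and $(1,a^{-1}x)$ a unit because $(1,y)(1,-y)=(1-\varphi(y,y),0)$ and $1-\varphi(y,y)\in R^\times$ since $\varphi(y,y)\in\varphi(M\times M)\subseteq\fkn$. Hence every element outside $\fkn\times M$ is a unit, so $A$ is local with maximal ideal $\fkn\times M$.

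I do not expect a serious obstacle here; the statement is essentially bookkeeping once Proposition \ref{graded} and Remark \ref{integral} are in hand. The one point requiring a little care is the direction $(1)\Rightarrow(2)$: one must be sure that $\fkn A$ is a \emph{proper} ideal of $A$ for each maximal ideal $\fkn$ of $R$, which follows from $\fkn A\cap R=\fkn\neq R$ (Proposition \ref{graded}(2)), and that these contract back correctly. The mild subtlety in $(2)\Rightarrow(1)$ is verifying that the hypothesis on $\varphi(M\times M)$ is used in exactly two places—to make $\fkn\times M$ an ideal, and to invert the degree-one part of elements with unit degree-zero component—so I would state it cleanly as: $\varphi(y,y)\in\fkn$ for all $y\in M$, hence $1-\varphi(y,y)\in R^\times$.
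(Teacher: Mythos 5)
Your proof is correct and takes essentially the same approach as the paper: both directions use integrality (Remark \ref{integral}) for $(1)\Rightarrow(2)$, and both establish $(2)\Rightarrow(1)$ by showing $\fkm\times M$ is an ideal (via Proposition \ref{graded}) and inverting every element outside it. The only difference is cosmetic: you factor $(a,x)=(a,0)(1,a^{-1}x)$ and compute $(1,y)(1,-y)=(1-\varphi(y,y),0)$, while the paper writes the inverse of $(a,x)$ explicitly as $\bigl(a(a^2-\varphi(x,x))^{-1},\,-x(a^2-\varphi(x,x))^{-1}\bigr)$; these are the same computation.
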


\begin{proof}
$(1)\Rightarrow (2)$: Since $A$ is integral over $R$ by Remark \ref{integral}, if $A$ is a local ring, so is $R$.

$(2)\Rightarrow (1)$: Suppose that $(R, \m)$ is a local ring  and $\varphi(M\times M) \subseteq \m$. Then, $\m\times M$ is an ideal of $A$ by Proposition \ref{graded}. Let $(a, x)\in A\setminus (\m\times M)$. Since $a\in R$ is a unit of $R$, $a^2-\varphi(x, x)\in R$ is also a unit of $R$. Then, we have 
$$(a, x)\cdot \left( a(a^2-\varphi(x, x))^{-1}, -x (a^2-\varphi(x, x))^{-1}\right)=(1, 0)\ \text{in $A$},$$
which implies that $(a, x)$ is a unit of $A$. Thus, $(A, \m\times M)$ is a local ring, as desired.
\end{proof}

\begin{cor}\label{localc}
If $(R, \fkm)$ is a local ring and $\varphi(M\times M) \subseteq \fkm$, then $A$ is a local ring having the unique maximal ideal $\fkm \times M$.
\end{cor}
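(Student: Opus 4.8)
The plan is to deduce this directly from Proposition \ref{local}. First I would observe that the hypothesis $\varphi(M\times M)\subseteq \fkm$ forces $\varphi(M\times M)$ to contain no unit of $R$, since $\fkm$ is a proper ideal; hence the implication $(2)\Rightarrow(1)$ of Proposition \ref{local} applies and $A$ is a local ring.

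Next I would identify the maximal ideal explicitly. By Proposition \ref{graded}(1), the conditions $\varphi(M,M)\subseteq \fkm$ and $\fkm M\subseteq M$ ensure that $\fkm\times M$ is a $\mathbb{Z}_2$-graded ideal of $A$. Then Proposition \ref{graded}(4) gives $A/(\fkm\times M)\cong (R/\fkm)\times_{\ol{\varphi}}(M/M)\cong R/\fkm$, which is a field, so $\fkm\times M$ is a maximal ideal of $A$; as $A$ is local, it is the unique one. (Alternatively, the computation already carried out in the proof of Proposition \ref{local}, exhibiting an inverse of every element of $A\setminus(\fkm\times M)$, yields the same conclusion.)

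There is essentially no obstacle here: the statement merely records the conclusion obtained inside the proof of Proposition \ref{local}, the only point to verify being the harmless remark that an ideal contained in $\fkm$ contains no units of $R$.
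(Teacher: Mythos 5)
Your argument is correct and matches the paper's implicit reasoning: the corollary is exactly the conclusion already established inside the proof of Proposition \ref{local} (2)$\Rightarrow$(1), where an explicit inverse of each $(a,x)\notin\fkm\times M$ is produced. Your added observation via Proposition \ref{graded}(4) that $A/(\fkm\times M)\cong R/\fkm$ is a clean alternative way to see that $\fkm\times M$ is maximal, but it is the same route in spirit.
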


\begin{rem}
If $M$ has no free summands, then $\varphi(M\times M) \cap R^{\times}=\emptyset$. Indeed, if $\varphi(M\times M) \cap R^{\times}\ne \emptyset$, then there are elements $x,y\in M$ such that $\varphi(x,y)$ is a unit of $R$. Hence, an $R$-linear homomorphism $\varphi(x, -): M \to R; \ \ z\mapsto \varphi(x, z)$ is surjective and a split homomorphism. It follows that $M$ has a free summand. This is a contradiction.
\end{rem}

We cannot determine if $R\times_\varphi M$ is local or not when  $R$ is a local ring and $\varphi(M\times M) \cap R^{\times}\ne \emptyset$. 

\begin{ex}\label{ex48!}
Let $\mathbb{R}$ be the set of real numbers and $\mathbb{R}[X]$ denote the polynomial ring over $\mathbb{R}$. Then the following hold true.
\begin{enumerate}[\rm(1)] 
\item Let $A_1=\mathbb{R}\times_{\varphi_1} \mathbb{R}$, where $\varphi_1:\mathbb{R} \times \mathbb{R} \to \mathbb{R}; \ \ (x,y) \to xy$. Then $A_1\cong \mathbb{R}[X]/(X^2-1)=\mathbb{R}[X]/[(X+1)(X-1)]$ and thus $A$ is not a local ring.
\item Let $\mathbb{R}\times_{\varphi_2} \mathbb{R}$, where $\varphi_2:\mathbb{R} \times \mathbb{R} \to \mathbb{R}; \ \ (x,y) \mapsto -xy$. Then $A_2\cong \mathbb{R}[X]/(X^2+1)\cong \mathbb{C}$ and thus $A_2$ is a local ring.
\end{enumerate}

\end{ex}


Let $(R, \fkm)$ be a Noetherian local ring of dimension $d$, $M$ a nonzero finitely generated $R$-module, and $\varphi: M\times M \to R$ be an $R$-bilinear  homomorphism satisfying the following conditions.
\begin{enumerate}[\rm(1)] 
\item $\varphi(x, y)=\varphi(y,x)$ for all $x,y\in M$.
\item $\varphi(x,y)z=\varphi(y,z)x$ for all $x,y,z\in M$.
\item $\varphi(M\times M) \subseteq \fkm$.
\end{enumerate} 
 Set $A=R\times_\varphi M$ and $\fkn=\fkm \times M$.

The characterization of the Cohen-Macaulay property of $A$ follows from Proposition \ref{noether} and the following general fact.

\begin{fact}\label{f310}
Let $(R, \fkm)$ and $(S, \fkn)$ be Noetherian local rings. Let $\psi: R\to S$ be a homomorphism of rings and assume that $S$ is a finitely generated $R$-module. Then, $\depth S=\depth_R S$.
\end{fact}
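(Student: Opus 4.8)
The plan is to reduce the statement to a standard change-of-rings fact for depth under a module-finite ring homomorphism. Recall that $\psi\colon R\to S$ makes $S$ into a finitely generated $R$-module, so $\Supp_R S$ is a closed subset of $\Spec R$; moreover $S$ being a nonzero Noetherian local ring with maximal ideal $\fkn$ forces $\psi^{-1}(\fkn)=\fkm$ (the contraction of a maximal ideal along an integral extension is maximal, and $R$ is local), hence $\fkm S\subseteq\fkn$ and in fact $\sqrt{\fkm S}=\fkn$ because $S/\fkm S$ is a finite-dimensional $R/\fkm$-algebra which is local, so it is Artinian local with nilpotent maximal ideal.

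The key step is the comparison of the two depths. On the one hand, $\depth_R S$ is computed as the length of a maximal $S$-regular sequence taken from $\fkm$ (here an $\fkm$-regular sequence on $S$ is automatically an honest regular sequence on $S$ as a module over itself, since regularity of a sequence on a module depends only on the module structure, and multiplication by $\psi(a)$ for $a\in\fkm$ is the relevant map). On the other hand, $\depth S$ is the length of a maximal $S$-regular sequence from $\fkn$. Since $\fkm S\subseteq\fkn$, every $\fkm$-regular sequence on $S$ is an $\fkn$-regular sequence on $S$, giving $\depth_R S\le\depth S$. For the reverse inequality I would use the characterization via Ext (or via nonvanishing of local cohomology): $\depth_R S=\inf\{i:\Ext_R^i(R/\fkm,S)\ne 0\}$ and $\depth S=\inf\{i:\Ext_S^i(S/\fkn,S)\ne 0\}$, and then invoke the fact that, because $\sqrt{\fkm S}=\fkn$, the $\fkm$-adic and $\fkn$-adic topologies on $S$ coincide, so $H^i_{\fkm}(S)=H^i_{\fkm S}(S)=H^i_{\fkn}(S)$ for all $i$; since $\depth_R S=\inf\{i: H^i_{\fkm}(S)\ne 0\}$ and $\depth S=\inf\{i: H^i_{\fkn}(S)\ne 0\}$, equality follows.

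Concretely, I would write the argument as follows. First establish $\fkn\cap R=\fkm$ and $\sqrt{\fkm S}=\fkn$ as above. Then recall $\depth_R S=\depth_{\fkm}(S)=\operatorname{grade}(\fkm,S)$ equals $\inf\{i:H^i_\fkm(S)\ne 0\}$ by the standard local-cohomology description of depth for a finitely generated module over a Noetherian local ring. Next, use the elementary invariance of local cohomology under passing to an ideal with the same radical: $H^i_\fkm(S)\cong H^i_{\fkm S}(S)$ (local cohomology over $R$ with support in $\fkm$ of an $S$-module equals local cohomology with support in $\fkm S$, viewing $S$ as an $S$-module, by the independence-of-base and change-of-ring behavior of local cohomology), and $H^i_{\fkm S}(S)\cong H^i_{\fkn}(S)$ since $\fkm S$ and $\fkn$ have the same radical. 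Finally $\inf\{i:H^i_\fkn(S)\ne 0\}=\depth S$, again by the local-cohomology characterization of depth, now over $(S,\fkn)$. Chaining the equalities gives $\depth_R S=\depth S$.

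The main obstacle is purely bookkeeping about which base ring one computes over: one must be careful that ``$\depth_R S$'' means the depth of $S$ as an $R$-module (maximal length of an $R$-regular sequence on $S$ drawn from $\fkm$) while ``$\depth S$'' means depth of the local ring $S$, and that the local cohomology modules $H^i_{\fkm}(S)$ and $H^i_{\fkn}(S)$ genuinely agree — this hinges on $\sqrt{\fkm S}=\fkn$, which in turn uses that $S$ is \emph{local} (not merely semilocal) so that $S/\fkm S$ is Artinian local with nilpotent radical. Everything else is a routine invocation of the standard equivalence between the regular-sequence definition of depth and its characterization via vanishing of local cohomology (or of $\Ext$), so the proof can be kept short by citing that equivalence and the base-change invariance of local cohomology.
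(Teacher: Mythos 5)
Your proof is correct. The paper states this as a \textbf{Fact} without giving a proof (it is a standard change-of-rings lemma for depth), so there is no in-paper argument to compare against; I can only assess your argument on its own merits.

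Your outline is sound and complete. The key points -- that $\fkm S\subseteq\fkn$ by Nakayama plus locality of $S$, that $S/\fkm S$ is Artinian local so $\sqrt{\fkm S}=\fkn$, and that local cohomology is invariant under base change (the independence theorem $H^i_{\fka}(M)\cong H^i_{\fka S}(M)$ for $S$-modules $M$) and under replacing an ideal by one with the same radical -- combine exactly as you say to give $H^i_\fkm(S)\cong H^i_\fkn(S)$ for all $i$, whence $\depth_R S=\depth_S S$ by the local-cohomology characterization of depth. One remark: the easy inequality $\depth_R S\le\depth S$ that you prove first by regular sequences is subsumed by the local-cohomology argument, so you could drop it. Alternatively, if you want to avoid local cohomology entirely, the reverse inequality has a short elementary proof: if $a_1,\dots,a_n\in\fkm$ is a maximal $S$-regular sequence, then $\fkm$ consists of zerodivisors on $\overline{S}:=S/(a_1,\dots,a_n)S$, so some $0\ne s\in\overline{S}$ is killed by $\fkm$, hence by $\fkm S$; the $S$-submodule $Ss$ is then a nonzero module over the Artinian local ring $S/\fkm S$, so its socle is nonzero, producing $0\ne s'$ with $\fkn s'=0$, which shows the same sequence is maximal as an $\fkn$-sequence. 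Either route is fine; yours is clean and cites only standard facts.
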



By applying the above fact with $S=A=R\oplus M$ as $R$-modules, we have the following.

\begin{cor}\label{CM} 
The following are equivalent.
\begin{enumerate}[\rm(1)] 
\item $A$ is a Cohen-Macaulay ring.
\item $R$ is a Cohen-Macaulay ring and $M$ is a maximal Cohen-Macaulay $R$-module.
\end{enumerate}
\end{cor}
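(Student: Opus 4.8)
The plan is to assemble the statement from facts already recorded in this section. Under the standing hypotheses of the corollary — $(R,\fkm)$ Noetherian local of dimension $d$, $M$ a nonzero finitely generated $R$-module, and $\varphi(M\times M)\subseteq\fkm$ — Corollary \ref{localc} shows $A$ is local with maximal ideal $\fkn=\fkm\times M$, Proposition \ref{noether} shows $A$ is Noetherian (indeed $A=R\oplus M$ is module-finite over $R$ since $M$ is finitely generated), and Remark \ref{integral} gives $\dim A=\dim R=d$. So the phrase ``Cohen--Macaulay ring'' is unambiguous for $A$ and means $\depth A=d$.

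Next I would compute $\depth A$ via the structural inclusion $R\hookrightarrow A$. Since $A$ is a finitely generated $R$-module through this map, Fact \ref{f310} yields $\depth A=\depth_R A$. As an $R$-module $A\cong R\oplus M$, and depth of a finite direct sum of nonzero modules is the minimum of the depths, so
\[
\depth A=\depth_R A=\min\{\depth R,\ \depth_R M\}.
\]
Here $M\neq 0$ is used so that $\depth_R M$ is defined.

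Finally I would combine these with the elementary inequalities $\depth R\le\dim R=d$ and $\depth_R M\le\dim_R M\le\dim R=d$. Then $A$ is Cohen--Macaulay $\iff$ $\depth A=d$ $\iff$ $\min\{\depth R,\depth_R M\}=d$ $\iff$ $\depth R=d$ and $\depth_R M=d$. The first of these says $R$ is Cohen--Macaulay; the second, together with $\depth_R M=d$ forcing $\dim_R M=d$, says $M$ is a maximal Cohen--Macaulay $R$-module. This gives both implications simultaneously.

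There is no real obstacle here: the one nontrivial ingredient is the identity $\depth A=\depth_R A$, which is quoted as Fact \ref{f310}, and everything else is a short bookkeeping with dimension and depth. I would just be careful to invoke $\varphi(M\times M)\subseteq\fkm$ (via Corollary \ref{localc}) at the outset so that locality of $A$ — hence the meaning of ``Cohen--Macaulay'' — is in place, and to note the minor point that a direct sum of $R$-modules is maximal Cohen--Macaulay iff each summand is.
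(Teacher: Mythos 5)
Your proof is correct and follows essentially the same route as the paper's: both reduce to $\depth A=\depth_R A=\min\{\depth R,\depth_R M\}$ via Fact \ref{f310} and the $R$-module decomposition $A=R\oplus M$, then read off both implications. The extra bookkeeping you add on dimensions and on locality via Corollary \ref{localc} is accurate but not a departure from the paper's argument.
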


\begin{proof}
By Proposition \ref{noether}, in each implication, we may assume that $R$ is Noetherian and $M$ is a finitely generated $R$-module. By Fact \ref{f310}, We then have 
\[
\depth A=\depth_R A=\depth_R (R\oplus M) = \min\{\depth R, \depth_R M\}.
\]
This provides the assertion.
\end{proof}

\section{The almost Gorenstein property of $R\times_{\varphi} \m$}\label{s5}

In this section, we explore the almost Gorenstein property of $R\times_{\varphi} \m$ in dimension one. We begin with the following. 

\begin{prop}\label{p48}
Suppose that $R$ is a Cohen-Macaulay local ring having the canonical module $\rmK_R$ and  $M$ is a maximal Cohen-Macaulay $R$-module. Then, an $A$-module
\[
\Hom_R(M, \rmK_R)\oplus \rmK_R
\]
with the action of Proposition \ref{dualaction} is the canonical module of $A$.
\end{prop}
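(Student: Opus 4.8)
The plan is to verify the defining properties of a canonical module for the $A$-module $L := \Hom_R(M,\rmK_R)\oplus\rmK_R$ directly, exploiting the module-finite extension $R\hookrightarrow A$ and the standard behaviour of canonical modules under such extensions. Recall that for a finitely generated module-finite algebra $S$ over a Cohen-Macaulay local ring $(R,\fkm)$ with canonical module $\rmK_R$, the $S$-module $\Hom_R(S,\rmK_R)$ is a canonical module of $S$ whenever $S$ is Cohen-Macaulay of the same dimension; this is the key external input (e.g.\ \cite{BH}). So the first step is to note that $A$ is Cohen-Macaulay with $\dim A=\dim R$ (Remark \ref{integral} and Corollary \ref{CM}, using that $M$ is maximal Cohen-Macaulay), hence $\Hom_R(A,\rmK_R)$ is the canonical module of $A$ as an $A$-module.

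The second and main step is to identify $\Hom_R(A,\rmK_R)$ with $L$ as $A$-modules. As $R$-modules we have $A=R\oplus M$, so $\Hom_R(A,\rmK_R)\cong\Hom_R(M,\rmK_R)\oplus\rmK_R$ canonically; the content is that this $R$-isomorphism is compatible with the $A$-actions, where the left side carries the natural $A$-action $(\alpha\mapsto (a,x)\alpha$ with $((a,x)\alpha)(\beta)=\alpha((a,x)\beta))$ and the right side carries the action introduced in Proposition \ref{dualaction}. But this is precisely the content of Proposition \ref{p33}: the map $\zeta:\Hom_R(A,\rmK_R)\to L$, $\alpha\mapsto(\alpha\circ\iota,\alpha(1,0))$, is an isomorphism of $A$-modules, where $\iota:M\to A$ is $x\mapsto(0,x)$. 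Thus I would simply invoke Proposition \ref{p33} with $K=\rmK_R$, giving $\rmK_A\cong\Hom_R(A,\rmK_R)\cong L$ as $A$-modules.

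The only real obstacle is making sure the hypotheses line up so that the ``$\Hom_R(A,\rmK_R)$ is canonical'' principle applies: one needs $A$ Cohen-Macaulay, $\dim A=\dim R$, and $\rmK_R$ a genuine canonical module of $R$. The first two are handled by Remark \ref{integral} and Corollary \ref{CM}; note $M\neq 0$ is needed for $A$ to have the same dimension behaviour, which is part of the standing hypotheses. One should also check that the $A$-module structure on $\Hom_R(A,\rmK_R)$ coming from the ring map $R\to A$ (functoriality in the first variable) is the standard one used in the statement of that principle — it is, since $A$ acts on $\Hom_R(A,\rmK_R)$ by $((a,x)\cdot\alpha)(\beta)=\alpha(\beta\cdot(a,x))$, exactly as required. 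With these checks in place the proof is essentially a two-line assembly: $A$ is CM of dimension $\dim R$, so $\Hom_R(A,\rmK_R)$ is its canonical module, and Proposition \ref{p33} rewrites this as $\Hom_R(M,\rmK_R)\oplus\rmK_R$ with the action of Proposition \ref{dualaction}.
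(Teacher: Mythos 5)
Your proposal is correct and follows essentially the same two-step argument as the paper: the paper cites \cite[Theorem 3.3.7(b)]{BH} to get $\rmK_A\cong\Hom_R(A,\rmK_R)$ (implicitly using Remark \ref{integral} and Corollary \ref{CM} for the Cohen--Macaulay and equidimensionality hypotheses you spell out), and then invokes Proposition \ref{p33} to rewrite this as $\Hom_R(M,\rmK_R)\oplus\rmK_R$ with the action of Proposition \ref{dualaction}. Your version is just a more explicit unpacking of the hypotheses needed for the BH theorem to apply.
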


\begin{proof}
By \cite[Theorem 3.3.7(b)]{BH}, we have $\rmK_A\cong \Hom_R(A, \rmK_R)$. Hence, the assertion follows by Proposition \ref{p33}.
\end{proof}

\begin{lem}\label{l59}
Let $I$ be a fractional ideal of $R$. Then, $\rmr_R(I)=\ell_R((I:\fkm)/I)$.
\end{lem}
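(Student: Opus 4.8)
The plan is to compute $\Ext^1_R(R/\fkm,I)$ directly and identify it with $(I:\fkm)/I$. Note first that, since the fractional ideal $I$ contains a non-zerodivisor of $R$, we have $\dim I=1$, so by the definition of the Cohen-Macaulay type, $\rmr_R(I)=\ell_R(\Ext^1_R(R/\fkm,I))$; moreover $I$ is a maximal Cohen-Macaulay $R$-module (its associated primes are among those of $\rmQ(R)$, hence minimal, so $\depth_R I\ge 1=\dim I$), and in particular $\Hom_R(R/\fkm,I)=0$.

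The first step is to apply $\Hom_R(-,I)$ to the exact sequence $0\to\fkm\to R\to R/\fkm\to 0$, which, using $\Hom_R(R/\fkm,I)=0$ and $\Ext^1_R(R,I)=0$, yields a short exact sequence
\[
0\longrightarrow \Hom_R(R,I)\longrightarrow \Hom_R(\fkm,I)\longrightarrow \Ext^1_R(R/\fkm,I)\longrightarrow 0.
\]
The second step is to insert the canonical identifications $\Hom_R(R,I)\cong I$ and $\Hom_R(\fkm,I)\cong I:\fkm$ (recalled in the Setup, valid since $\fkm$ and $I$ are both fractional ideals, $\fkm$ containing a non-zerodivisor as $\depth R=1$), and to check that under these the left-hand map is simply the inclusion $I\hookrightarrow I:\fkm$: an element $c\in I$ corresponds to multiplication by $c$, whose restriction to $\fkm$ corresponds under the colon-fractional-ideal identification to $c$ itself. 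Consequently $\Ext^1_R(R/\fkm,I)\cong (I:\fkm)/I$, and passing to lengths gives $\rmr_R(I)=\ell_R((I:\fkm)/I)$, as desired.

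The only point that needs genuine care is the compatibility in the second step — that the map induced by $\fkm\hookrightarrow R$ really is the inclusion $I\subseteq I:\fkm$ under the two natural isomorphisms. The rest (the depth computation, the vanishing of $\Hom_R(R/\fkm,I)$ and $\Ext^1_R(R,I)$, and the observation that $(I:\fkm)/I$ is $\fkm$-torsion, hence of finite length) is routine. As an alternative one could instead pick a non-zerodivisor $x\in\fkm$ regular on $I$ and use the Rees lemma to get $\Ext^1_R(R/\fkm,I)\cong\soc(I/xI)$, then note that multiplication by $x$ identifies $(I:\fkm)/I$ with $(xI:_I\fkm)/xI=\soc(I/xI)$; but the sequence-based argument above seems cleaner.
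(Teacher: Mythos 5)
Your proof is correct and is essentially the paper's argument: apply $\Hom_R(-,I)$ to $0\to\fkm\to R\to R/\fkm\to 0$, identify the resulting terms with colon fractional ideals, and read off $\Ext_R^1(R/\fkm,I)\cong(I:\fkm)/I$. The extra details you supply (the vanishing of $\Hom_R(R/\fkm,I)$, the compatibility of the identifications with the inclusion $I\subseteq I:\fkm$) are just the routine checks the paper leaves implicit.
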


\begin{proof}
Consider the exact sequence $0 \to \fkm \to R \to R/\fkm \to 0$. By applying the functor $\Hom_R(-, I) = I:-$, we obtain that 
\[
0 \to I=I:R \to I:\fkm \to \Ext_R^1(R/\fkm, I) \to 0. 
\]
It follows $\Ext_R^1(R/\fkm, I) \cong (I:\fkm)/I$. 
\end{proof}

In what follows, let $(R, \m)$ be a Cohen-Macaulay local ring of dimension one having the canonical module $\rmK_R$, and suppose that there exists a canonical ideal. We set 
\begin{center}
$B:=\fkm:\fkm$, \quad  and $K:=\frac{\omega}{a}$
\end{center}
as in Section \ref{s1}.

\begin{lem}\label{l54}
Suppose that $R$ is not a discrete valuation ring. Then $\fkm(K:\fkm)=\fkm K$.
\end{lem}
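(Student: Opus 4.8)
The plan is to identify the two fractional ideals $\fkm K$ and $\fkm(K:\fkm)$ after applying the $\rmK_R$-dual $\Hom_R(-,\rmK_R)\cong (K:-)$, observe that the resulting ideals coincide, and then dualize back. First I would note that $\fkm$, $K$, $\fkm K$, $K:\fkm$, and $\fkm(K:\fkm)$ are all fractional ideals of $R$ — each visibly contains a non-zerodivisor, since $\fkm$ and $K$ do — hence are maximal Cohen–Macaulay $R$-modules ($\dim R=1$), so the canonical duality is reflexive on them: $K:(K:X)=X$. In particular $K:(K:\fkm)=\fkm$. Using the elementary colon identity $K:(\fkm X)=(K:X):\fkm$ for a fractional ideal $X$, together with $K:K=R$ (i.e.\ $\End_R(K)=R$ for the canonical module), I obtain
\[
K:(\fkm K)=(K:K):\fkm=R:\fkm,\qquad K:\bigl(\fkm(K:\fkm)\bigr)=\bigl(K:(K:\fkm)\bigr):\fkm=\fkm:\fkm=B.
\]

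Next I would prove $R:\fkm=B$. The inclusion $B\subseteq R:\fkm$ is trivial, since $\fkm B\subseteq\fkm\subseteq R$. For the reverse, this is exactly where the hypothesis that $R$ is not a discrete valuation ring enters: by Fact \ref{lll55}(3) we then have $\tr_R(\fkm)=\fkm$, and since $\tr_R(\fkm)=(R:\fkm)\fkm$ by Fact \ref{lll55}(1), we get $(R:\fkm)\fkm=\fkm\subseteq\fkm$, whence $R:\fkm\subseteq\fkm:\fkm=B$. Therefore $R:\fkm=B$, so the two displayed duals coincide: $K:(\fkm K)=K:\bigl(\fkm(K:\fkm)\bigr)$.

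Finally, applying $K:(-)$ once more and invoking reflexivity of the $\rmK_R$-dual on the fractional ideals $\fkm K$ and $\fkm(K:\fkm)$, I conclude $\fkm K=\fkm(K:\fkm)$, as desired. I do not expect a genuine obstacle here; the only points requiring care are the routine verifications that all the ideals in play are honest fractional ideals (so that $\Hom_R(-,\rmK_R)$-reflexivity applies) and the standard identifications $K:K=R$ and $K:(K:\fkm)=\fkm$. It is worth emphasizing that the argument never uses $R\subseteq K$, which may fail when $(a)$ is not a reduction of $\omega$.
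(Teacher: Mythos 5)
Your proof is correct, and it takes a genuinely different route from the paper's. The paper argues by a length count modulo $\fkm K$: it computes $\ell_R((K:\fkm)/\fkm K)=1+\rmr(R)$ and, via $\rmr_R(\fkm K)=\mu_R(K:\fkm K)=\mu_R(R:\fkm)$ together with $\tr_R(\fkm)=\fkm(R:\fkm)=\fkm$, shows $\ell_R((\fkm K:\fkm)/\fkm K)$ also equals $\rmr(R)+1$; since $\fkm K:\fkm\subseteq K:\fkm$, this forces $K:\fkm=\fkm K:\fkm$, and multiplying by $\fkm$ gives the desired inclusion $\fkm(K:\fkm)\subseteq\fkm K$. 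You instead dualize both sides directly into $\rmQ(R)$: the colon identity $K:(\fkm X)=(K:X):\fkm$ together with $K:K=R$ and $K:(K:\fkm)=\fkm$ gives $K:(\fkm K)=R:\fkm$ and $K:\bigl(\fkm(K:\fkm)\bigr)=\fkm:\fkm=B$, and the non-DVR hypothesis (via Fact \ref{lll55}(1),(3)) gives $R:\fkm=B$; reflexivity of the $K$-dual on fractional ideals then forces $\fkm K=\fkm(K:\fkm)$. Both arguments hinge on the same use of the non-DVR hypothesis, namely $\fkm(R:\fkm)=\fkm$, i.e.\ $R:\fkm=B$; the difference is that the paper converts this into an equality of lengths while you convert it into an equality of $K$-duals. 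Your version is arguably more conceptual and avoids the bookkeeping with Cohen--Macaulay types and minimal generators, at the cost of invoking reflexivity ($K:(K:X)=X$ for fractional ideals $X$), which is a standard Herzog--Kunz fact in dimension one.
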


\begin{proof}
Since $K\subseteq K:\fkm$, we have $\fkm(K:\fkm)\supseteq \fkm K$. We prove the reverse inclusion. In the equations below, $\rmr_R(\fkm K)=\mu_R(K:\fkm K)$ follows from \cite[Proposition 3.3.11]{BH}. Other equalities can be proved by Fact \ref{lll55}, Lemma \ref{l59}, and \cite[Lemma 2.1]{HK2}.
\begin{align*}
\ell_R((K:\fkm)/\fkm K)=&\ell_R((K:\fkm)/K) + \ell_R(K/\fkm K)=1 + \rmr(R) \quad \text{and}\\
\ell_R((\fkm K:\fkm)/\fkm K)=&\rmr_R(\fkm K)=\mu_R(K:\fkm K)=\mu_R((K:K):\fkm)=\mu_R(R:\fkm)\\
=&\ell_R((R:\fkm)/\fkm(R:\fkm))=\ell_R((R:\fkm)/\tr_R(\fkm))=\ell_R((R:\fkm)/\fkm)\\
=&\ell_R((R:\fkm)/R) + \ell_R(R/\fkm)=\rmr(R) +1.
\end{align*}
Hence, since $\fkm K:\fkm \subseteq K:\fkm$, we have $K:\fkm=\fkm K:\fkm$. 
It follows that $\fkm (K:\fkm) = \fkm (\fkm K:\fkm)\subseteq \fkm K$.
\end{proof}

Let $\varphi : \m\times \m \to R; (x, y)\mapsto \alpha xy$ where $\alpha \in \fkm:\fkm^2 (\subseteq \rmQ(R))$. Set 
\begin{center}
$A=R\times_\varphi \fkm$ \quad and \quad $\fkn=\fkm \times \fkm$.
\end{center}
$A$ is a local ring with maximal ideal $\fkn$ by Corollary \ref{localc}. When the case where $\alpha=0$, that is, $A$ is the idealization in the sense of \cite{AW}, it is known that $A$ is almost Gorenstein if and only if $R$ is almost Gorenstein (\cite[Theorem 6.5]{GMP}).
The purpose of this section is to complement this result for any $\alpha\in \rmQ(R)$. 
The goal is to prove Theorem \ref{t55}. 

\begin{lem}\label{lll58}
$\rmQ(A)=\rmQ(R)\times \rmQ(R)$, where the product is given by 
\[
(a,x){\cdot}(b,y):=(ab+\alpha xy, ay+bx),
\]
for $a,b, x, y\in \rmQ(R)$.
\end{lem}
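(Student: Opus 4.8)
The plan is to identify $\rmQ(A)$ concretely by exhibiting enough invertible elements of $A$, since $A = R \times_\varphi \fkm$ is a one-dimensional Cohen-Macaulay ring module-finite over $R$ (Corollary \ref{CM}, Remark \ref{integral}), hence $\rmQ(A)$ is obtained by inverting the finitely many nonzerodivisors of $A$, and these all come from $R$. First I would observe that the natural inclusion $R \hookrightarrow A$ extends to a ring homomorphism $\rmQ(R) \to \rmQ(A)$ once we know that every nonzerodivisor of $R$ remains a nonzerodivisor of $A$: if $s \in R$ is a nonzerodivisor and $(s,0)(b,y) = (sb, sy) = 0$, then $sb = 0$ and $sy = 0$ in $\fkm \subseteq R$, forcing $b = 0$, $y = 0$. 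Thus $(s,0)$ is a unit in $\rmQ(A)$ for every such $s$, and so the element $\alpha \in \fkm : \fkm^2 \subseteq \rmQ(R)$ and the generators of $\fkm$ all make sense inside $\rmQ(A)$.

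Next I would show the total ring of fractions is exactly $\rmQ(R) \times \rmQ(R)$ with the twisted multiplication displayed in the statement. Writing $x \in \fkm$ for a nonzerodivisor of $R$ contained in $\fkm$ (which exists since $\fkm$ contains a nonzerodivisor as $R$ has positive depth), the element $(0,x) \in A$ satisfies $(0,x)^2 = (\alpha x^2, 0) = (\alpha x^2, 0)$, and since $\alpha x^2 \in \fkm \fkm = \fkm^2 \cdot \ldots$ — more precisely $\alpha x^2 \in R$ because $\alpha \in \fkm : \fkm^2$ and $x^2 \in \fkm^2$ — so $(0,x)^2 = (\alpha x^2, 0)$ with $\alpha x^2$ a nonzerodivisor of $R$ whenever $x$ is (this needs $\alpha x^2 \notin$ any associated prime, which holds since $R$ is Cohen-Macaulay one-dimensional and $\alpha x^2$ is a nonzerodivisor: indeed if $\alpha \ne 0$ in $\rmQ(R)$ then $\alpha x^2$ is a nonzerodivisor, and if $\alpha = 0$ the whole claim degenerates to the idealization case where $\rmQ(A) = \rmQ(R) \times \rmQ(R)$ with zero twist, which is standard). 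So $(0,x)$ is a nonzerodivisor in $A$, hence invertible in $\rmQ(A)$. Then for any $(a,b) \in \rmQ(R) \times \rmQ(R)$ with the proposed multiplication, writing $b = c/s$ and $a = a'/s$ with $s$ a common denominator a nonzerodivisor in $R$, and using $x$ as above to write $b = (bx)/x$ with $bx \in \rmQ(R)$, one realizes $(a,b)$ as $(a'/s, 0) + (0, bx)\cdot(0,x)^{-1}\cdot(\text{something})$ — concretely $(0,b) = (0,c)\cdot(s,0)^{-1}$ when $c \in \fkm$ and $(0, c/x) = (0,c)\cdot(0,x)^{-1}\cdot(0,x)\cdot\ldots$; the cleanest route is: every element of $\rmQ(A)$ is $(\alpha_0, \beta_0)\cdot(s,0)^{-1}$ with $(\alpha_0,\beta_0) \in A$ and $s$ a nonzerodivisor of $R$, and $(s,0)^{-1} = (1/s, 0)$, so $\rmQ(A) = \{(\alpha_0/s, \beta_0/s) : \alpha_0 \in R, \beta_0 \in \fkm, s \text{ nzd}\}$, and since $\fkm$ contains a nonzerodivisor this set is all of $\rmQ(R) \times \rmQ(R)$.

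Finally I would verify that the multiplication on $\rmQ(R) \times \rmQ(R)$ induced from $A$ is given by the stated formula: this is immediate by continuity/bilinearity of the formula $(a,x)(b,y) = (ab + \alpha xy, ay + bx)$ under passing to fractions, since clearing denominators reduces any instance of the identity to the defining identity in $A$. The main obstacle — really the only subtle point — is confirming that nonzerodivisors of $A$ are precisely (the images of) nonzerodivisors of $R$, i.e. that $(0,x)$-type elements and more general $(a,b)$ with $a$ a unit do not introduce genuinely new denominators; this follows because $A$ is module-finite over $R$ so $\mathrm{Ass}_A(A) $ lies over $\mathrm{Ass}_R(R)$, and for each minimal prime $\fkp$ of $R$ one checks $A_\fkp = R_\fkp \times_{\varphi_\fkp} \fkm_\fkp = R_\fkp \times_{\varphi_\fkp} R_\fkp$ (as $\fkm_\fkp = R_\fkp$), whose total quotient ring is $\rmQ(R_\fkp) \times \rmQ(R_\fkp)$ with the twisted product — localizing at the (finitely many) minimal primes and assembling gives the claim. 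I would also remark that $\alpha$ itself lies in $\rmQ(R)$ and the formula makes sense because $\alpha \cdot xy \in \rmQ(R)$ for all $x, y \in \rmQ(R)$, with no constraint needed at the level of $\rmQ$.
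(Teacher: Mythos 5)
Your overall strategy agrees with the paper's: pass from $A$ to $S^{-1}A$ with $S$ consisting of (images of) nonzerodivisors of $R$, observe that $S^{-1}\fkm = S^{-1}R = \rmQ(R)$ because $\fkm$ contains a nonzerodivisor, and check the twisted product formula transfers. Where the two proofs diverge is on the single nontrivial point you flag at the end: why inverting nonzerodivisors of $R$ already produces \emph{all} of $\rmQ(A)$. The paper dispatches this in one stroke with an Artinian observation you never quite make: pick one nonzerodivisor $s\in\fkm$ and invert only that; since $R$ is one-dimensional Cohen--Macaulay and $s$ is a parameter, $S^{-1}R$ (with $S=\{s^n\}$) is Noetherian of dimension zero, hence Artinian, hence equal to its own total quotient ring, so $\rmQ(R)=S^{-1}R$. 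The identical reasoning applied to the one-dimensional Cohen--Macaulay ring $A$ and its nonzerodivisor $(s,0)$ gives $\rmQ(A)=S^{-1}A$ at once, with no need to locate $\Ass_A(A)$ or assemble localizations at minimal primes. Your route via ``$\Ass_A(A)$ lies over $\Ass_R(R)$, localize at each minimal prime $\fkp$, note $\fkm_\fkp=R_\fkp$, and assemble'' is a valid alternative (for a $1$-dimensional CM ring one indeed has $\rmQ(A)\cong\prod_{\fkP\in\Min A}A_\fkP$, and lying-over holds because $A$ is a module-finite torsion-free extension), but it needs several supporting facts the paper avoids entirely.

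Two local remarks. First, the middle of your argument --- showing $(0,x)$ is a nonzerodivisor of $A$ and worrying whether $\alpha x^2$ is a nonzerodivisor of $R$, with the parenthetical branching on $\alpha=0$ --- is not needed by any version of the proof and should be cut; once $(s,0)$ is inverted, the element $(0,x)$ is automatically handled. Second, the sentence ``nonzerodivisors of $A$ are precisely (the images of) nonzerodivisors of $R$'' is false as stated (e.g.\ $(0,s)$ is a nonzerodivisor not in the image of $R$); what you mean, and what your ``i.e.'' clause correctly restates, is that inverting the nonzerodivisors of $R$ already makes every nonzerodivisor of $A$ invertible, which is exactly what the Artinian argument delivers without case analysis.
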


\begin{proof}
Let $s\in \fkm$ be a non-zerodivisor of $R$, and set the multiplicatively closed subset $S=\{s^n \mid n\ge 0\}$ of $R$. Then $R\subseteq S^{-1}R \subseteq \rmQ(R)$. Since $S^{-1}R$ is Artinian, it follows that $\rmQ(R)=\rmQ(S^{-1}R)=S^{-1}R$.
By the same reason, we have $\rmQ(A)=S^{-1} A$. Note that $S^{-1} A = S^{-1}R \times S^{-1}\fkm=\rmQ(R) \times \rmQ(R)$ as $R$-modules. The equations also hold as $A$-modules with the above product.
\end{proof}


Now, we are in a position to give a characterization of the almost Gorenstein property of $R\times_{\varphi} \m$.

\begin{thm}\label{t55}
Suppose that $R$ is not a discrete valuation ring. Consider the following conditions.
\begin{enumerate}[\rm(1)] 
\item $A$ is an almost Gorenstein ring.
\item $R$ is an almost Gorenstein ring and $\tr_B(\langle 1, \alpha \rangle_B)=B$, where $\langle 1, \alpha \rangle_B$ denotes the $B$-module generated by $1$ and $\alpha$.
\item $R$ is an almost Gorenstein ring and either $\alpha\in B$ or $\alpha^{-1}\in B$.
\end{enumerate}
Then, {\rm (1)$\Leftrightarrow$(2)$\Leftarrow$(3)} hold. {\rm (2)$\Rightarrow$(3)} also holds if $B$ is a local ring.
\end{thm}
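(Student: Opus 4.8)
The plan is to verify the trace criterion of Theorem~\ref{p52}(4) directly for $A$. Since $A$ is module-finite over $R$ (Remark~\ref{integral}), it is a one-dimensional Cohen--Macaulay local ring (Corollary~\ref{CM}, as $\fkm$ is maximal Cohen--Macaulay over $R$, and Corollary~\ref{localc}) with canonical module $\rmK_A=\Hom_R(A,\rmK_R)$, and by Remark~\ref{rem28} the equivalence ``$A$ is almost Gorenstein $\iff\tr_A(\fkn\rmK_A)\supseteq\fkn$'' holds without assuming that $A$ has a canonical ideal. First I would use Propositions~\ref{p48} and~\ref{p33} to realize $\rmK_A$ as the fractional ideal $\Omega:=(K:\fkm)\times K$ of $A$ sitting inside $\rmQ(A)=\rmQ(R)\times\rmQ(R)$ (Lemma~\ref{lll58}), with $A$ acting by restriction of the ring multiplication of $\rmQ(A)$; this agrees with the action of Proposition~\ref{dualaction} under $\Hom_R(\fkm,K)\cong K:\fkm$. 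Using Lemma~\ref{l54} (this is where ``$R$ is not a DVR'' is needed) one computes
\[
\fkn\,\Omega=(\fkm K)\langle1,\alpha\rangle_R\times\fkm K,\qquad\langle1,\alpha\rangle_R:=R+R\alpha .
\]
Moreover $A$ is not a DVR, since $\fkn/\fkn^2\cong\fkm/(\fkm^2+\alpha\fkm^2)\oplus\fkm/\fkm^2$ has $R/\fkm$-dimension at least $v(R)\ge2$; so $\tr_A(\fkn\Omega)\subseteq\tr_A(\fkn)=\fkn$ by Fact~\ref{lll55}(2),(3), and $A$ is almost Gorenstein if and only if $\tr_A(\fkn\Omega)=\fkn$.

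Next I would show that $A$ almost Gorenstein forces $R$ almost Gorenstein. Expanding $\tr_A(\fkn\Omega)=(A:\fkn\Omega)(\fkn\Omega)$ (Fact~\ref{lll55}(1)) and computing $A:\fkn\Omega$ coordinatewise in $\rmQ(R)\times\rmQ(R)$, one finds that the second coordinate of $\tr_A(\fkn\Omega)$ lies in $\tr_R\!\bigl((\fkm K)\langle1,\alpha\rangle_R\bigr)+\tr_R(\fkm K)$, hence in $\tr_R(\fkm K)=\tr_R(\fkm\rmK_R)$ by Fact~\ref{lll55}(2) and $\fkm K\cong\fkm\rmK_R$. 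Thus $\tr_A(\fkn\Omega)=\fkn$ implies $\fkm\subseteq\tr_R(\fkm\rmK_R)$, and Theorem~\ref{p52} gives that $R$ is almost Gorenstein, i.e.\ $\fkm K=\fkm$.

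Now assume $R$ is almost Gorenstein, so $\fkn\Omega=\fkm\langle1,\alpha\rangle_R\times\fkm$, and set $D:=B\cap\alpha^{-1}B$, so that $\Hom_B(\langle1,\alpha\rangle_B,B)\cong D$ and $\tr_B(\langle1,\alpha\rangle_B)=D\langle1,\alpha\rangle_B=D\langle1,\alpha\rangle_R$. Using $\fkm B=\fkm$, $(R:\fkm)\subseteq B$ and Lemma~\ref{ll55}, the second coordinate of $\tr_A(\fkn\Omega)$ works out to be $E\langle1,\alpha\rangle_R\fkm$ for some $B$-submodule $E\subseteq D$, while $D\langle1,\alpha\rangle_R\fkm\subseteq\fkm$ always; hence $\tr_A(\fkn\Omega)=\fkn$ forces $D\langle1,\alpha\rangle_R\fkm=\fkm$, and then $D\langle1,\alpha\rangle_R=B$ by Lemma~\ref{ll55} (note $D\langle1,\alpha\rangle_R$ is a $B$-module), i.e.\ $\tr_B(\langle1,\alpha\rangle_B)=B$. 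This proves $(1)\Rightarrow(2)$. Conversely, if $R$ is almost Gorenstein and $D\langle1,\alpha\rangle_R=B$, I would take $\X:=D\times D$, check that it is a finitely generated $A$-submodule of $\rmQ(A)$ (using $\fkm\subseteq D$ and $\alpha\fkm\subseteq B$, both from $\alpha\in\fkm:\fkm^2$, and that $D$ is a $B$-module), verify $\X\cdot\fkn\Omega=\fkn$ by a short calculation, and conclude $\tr_A(\fkn\Omega)\supseteq\fkn$ by summing the ``multiplication by a generator of $\X$'' maps $\fkn\Omega\to A$; thus $A$ is almost Gorenstein. Together with the previous paragraph this gives $(1)\Leftrightarrow(2)$.

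It remains to compare $(2)$ and $(3)$, which concerns only the semilocal ring $B$. If $\alpha\in B$ then $\langle1,\alpha\rangle_B=B$ and $\tr_B(B)=B$; if $\alpha^{-1}\in B$ then multiplication by $\alpha^{-1}$ is a $B$-linear surjection $\langle1,\alpha\rangle_B\to B$; in either case $\tr_B(\langle1,\alpha\rangle_B)=B$, so $(3)\Rightarrow(2)$. If $B$ is local and $\tr_B(\langle1,\alpha\rangle_B)=B$, then since $\tr_B(\langle1,\alpha\rangle_B)=D\langle1,\alpha\rangle_B$ is the sum of the ideals $\beta\langle1,\alpha\rangle_B$ over a finite generating set $\{\beta\}$ of $D=B\cap\alpha^{-1}B$, localness forces $\beta\langle1,\alpha\rangle_B=\beta B+\beta\alpha B=B$ for one such $\beta$; localness again forces $\beta$ or $\beta\alpha$ to be a unit of $B$, whence $\alpha=\beta^{-1}(\beta\alpha)\in B$ or $\alpha^{-1}=\beta(\beta\alpha)^{-1}\in B$; thus $(2)\Rightarrow(3)$ when $B$ is local. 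The step I expect to be the main obstacle is the explicit computation of $\fkn\Omega$ and of $A:\fkn\Omega$ used above: because $(a)$ need not be a reduction of $\omega$ we cannot assume $R\subseteq K$, so these fractional ideals of $A$ are genuinely twisted and Lemma~\ref{l54} is essential in order to rewrite them in terms of $\fkm K$; once $R$ is known to be almost Gorenstein, $\fkm=\fkm B$ and Lemma~\ref{ll55} transport the trace condition over $A$ to the trace condition over $B$.
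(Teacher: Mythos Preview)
Your argument is correct and follows essentially the same route as the paper: realize $\rmK_A$ as the fractional ideal $(K:\fkm)\times K$ of $A$ inside $\rmQ(A)$, compute $\fkn\rmK_A$ via Lemma~\ref{l54}, and test the criterion $\tr_A(\fkn\rmK_A)\supseteq\fkn$ of Theorem~\ref{p52}. The organizational differences are minor. The paper first proves the closed formula
\[
\tr_A(\fkn\rmK_A)=\fkm\bigl(\tr_B(\langle 1,\alpha\rangle_B K)\bigr)\times\fkm\bigl(\tr_B(\langle 1,\alpha\rangle_B K)\bigr)
\]
and then reads off both implications at once, whereas you split the work: you first bound the second coordinate by $\tr_R(\fkm K)$ to get ``$A$ almost Gorenstein $\Rightarrow$ $R$ almost Gorenstein'' via Theorem~\ref{p52}(4), and only afterwards identify the trace with $\fkm\,\tr_B(\langle 1,\alpha\rangle_B)$. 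In fact your ``$E\subseteq D$'' is an equality once $\fkm K=\fkm$, since then $B:\langle 1,\alpha\rangle_B K=\fkm:\fkm\langle 1,\alpha\rangle_B K=\fkm:\fkm\langle 1,\alpha\rangle_B=B:\langle 1,\alpha\rangle_B=D$; this is exactly the computation the paper carries out. For $(2)\Rightarrow(1)$ you exhibit the explicit fractional ideal $\calX=D\times D$ with $\calX\cdot\fkn\Omega=\fkn$, which is a pleasant alternative to reading the inclusion off the closed formula. Finally, for $(2)\Rightarrow(3)$ the paper invokes Lindo's result that $\tr_B(M)=B$ forces $M$ cyclic when $B$ is local, while you unpack precisely that argument by hand; the content is the same.
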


\begin{proof}
First, we prove the following.

\begin{claim}
$\tr_A(\fkn \rmK_A)=\fkm (\tr_B(\langle 1, \alpha \rangle_B K) )\times \fkm (\tr_B(\langle 1, \alpha \rangle_B K)).$
\end{claim}

\begin{proof}
 By Proposition \ref{p48} and Lemma \ref{lll58}, a canonical module of $A$ is given by 
\[
\rmK_A=(K:\fkm) \times K \subseteq \rmQ(R)\times \rmQ(R)=\rmQ(A).
\]
Hence, 
\begin{align*}
\fkn \rmK_A&=[\fkm (K:\fkm) + \alpha \fkm K] \times [\fkm(K:\fkm) + \fkm K]\\
&= (\fkm K + \alpha \fkm K) \times \fkm K\\
&=\fkm \langle 1, \alpha \rangle_B K \times \fkm K
\end{align*}
by Lemma \ref{l54}. In contrast, for $(a, x)\in \rmQ(A)$, we can confirm that
\begin{eqnarray*}
&& (a, x)\in A:\fkn \rmK_A \\
 &\Leftrightarrow& (a, x)\cdot [\langle 1, \alpha \rangle_B\fkm K \times \fkm K] \subseteq R\times \fkm \\
&\Leftrightarrow& a\in [R: \langle 1, \alpha \rangle_B \fkm K]\cap (\fkm :\fkm K) \text{ and } x\in \fkm: \langle 1, \alpha \rangle_B \fkm K.
\end{eqnarray*}
Since $R: \langle 1, \alpha \rangle_B \fkm K=(R: \fkm):\langle 1, \alpha \rangle_B  K =(\fkm: \fkm):\langle 1, \alpha \rangle_B  K=B:\langle 1, \alpha \rangle_B  K$ and $\fkm: \langle 1, \alpha \rangle_B \fkm K=(\fkm: \fkm):\langle 1, \alpha \rangle_B  K=B:\langle 1, \alpha \rangle_B  K$, we have $A: \fkn \rmK_A=(B:\langle 1, \alpha \rangle_B  K)\times (B:\langle 1, \alpha \rangle_B  K)$.
Therefore, we obtain that
\begin{align*}
\tr_A(\fkn \rmK_A)&=\fkn \rmK_A(A:\fkn \rmK_A)\\
&= (\fkm \langle 1, \alpha \rangle_B K \times \fkm K)\cdot [(B:\langle 1, \alpha \rangle_B  K)\times (B:\langle 1, \alpha \rangle_B  K)]\\
&=[\fkm \langle 1, \alpha \rangle_B K (B:\langle 1, \alpha \rangle_B  K) + \alpha \fkm K (B:\langle 1, \alpha \rangle_B  K)]\times[\fkm \langle 1, \alpha \rangle_B K (B:\langle 1, \alpha \rangle_B  K) + \fkm K(B:\langle 1, \alpha \rangle_B  K)]\\
&=\fkm \langle 1, \alpha \rangle_B K (B:\langle 1, \alpha \rangle_B  K) \times \fkm \langle 1, \alpha \rangle_B K (B:\langle 1, \alpha \rangle_B  K) \\
&=\fkm (\tr_B(\langle 1, \alpha \rangle_B K)) \times \fkm (\tr_B(\langle 1, \alpha \rangle_B K)).
\end{align*}

\end{proof}

(1)$\Leftrightarrow$(2): Since $\fkm (\tr_B(\langle 1, \alpha \rangle_B K)) \subseteq \fkm B =\fkm$, we have $\tr_A(\fkn \rmK_A)\subseteq \fkm \times \fkm = \fkn$. Hence, $A$ is an almost Gorenstein ring if and only if $\fkm (\tr_B(\langle 1, \alpha \rangle_B K))=\fkm$ by Theorem \ref{p52}. 

In contrast, since $\tr_B(\langle 1, \alpha \rangle_B K)=[\langle 1, \alpha \rangle_B  (B:\langle 1, \alpha \rangle_B  K)]K$, by applying Theorem \ref{p52} with $X=\langle 1, \alpha \rangle_B  (B:\langle 1, \alpha \rangle_B  K)$, we obtain that $R$ is an almost Gorenstein ring and $\fkm K= \fkm$ if $\fkm (\tr_B(\langle 1, \alpha \rangle_B K))=\fkm$.
In addition, if $\fkm K= \fkm$, we have 
\begin{align*}
\fkm (\tr_B(\langle 1, \alpha \rangle_B K))&=\fkm \langle 1, \alpha \rangle_B K (B:\langle 1, \alpha \rangle_B  K)\\
&=\fkm K \langle 1, \alpha \rangle_B  [(\fkm:\fkm):\langle 1, \alpha \rangle_B  K]\\
&=\fkm K \langle 1, \alpha \rangle_B  (\fkm:\langle 1, \alpha \rangle_B  \fkm K)\\
&=\fkm  \langle 1, \alpha \rangle_B  (\fkm:\langle 1, \alpha \rangle_B  \fkm )\\
&=\fkm  \langle 1, \alpha \rangle_B  (B:\langle 1, \alpha \rangle_B )\\
&=\fkm (\tr_B(\langle 1, \alpha \rangle_B )).
\end{align*}
Thus, $\fkm (\tr_B(\langle 1, \alpha \rangle_B K))=\fkm$ is equivalent to saying that $R$ is almost Gorenstein and $\fkm (\tr_B(\langle 1, \alpha \rangle_B ))=\fkm$.
By Lemma \ref{ll55}, we can replace the equation with the equation in the assertion (2).

(3)$\Rightarrow$(2): Suppose that $\alpha\in B$. Then, we obtain that $\tr_B(\langle 1, \alpha \rangle_B )=\tr_B(B)=B$.
Suppose that $\alpha^{-1}\in B$. Then, since $\langle 1, \alpha \rangle_B\cong \alpha^{-1} \langle 1, \alpha \rangle_B= \langle 1, \alpha^{-1} \rangle_B= B$, it follows by a similar argument that $\tr_B(\langle 1, \alpha \rangle_B )=B$.

(2)$\Rightarrow$(3): Since $\tr_B(\langle 1, \alpha \rangle_B)=B$ and $B$ is a local ring, $\langle 1, \alpha \rangle_B$ is a cyclic $B$-module by \cite[Proposition 2.8 (iii)]{Lin}.
Therefore, we have either $\langle 1, \alpha \rangle_B= B$ or $\langle 1, \alpha \rangle_B=\alpha B$, which implies that either $\alpha \in B$ or $\alpha^{-1}\in B$.
\end{proof}


\begin{ex}
Let $k$ be a field, and let $k[[t]]$ be the formal power series ring over $k$. 
\begin{enumerate}[\rm(1)] 
\item Set $R=k[[t^3, t^4, t^5]]$. Let $\varphi : \m\times \m \to R; (x, y)\mapsto \alpha xy$, where $\fkm$ denotes the maximal ideal of $R$ and $\alpha\in \fkm:\fkm^2$. Then, $R\times_{\varphi} \fkm$ is a non-Gorenstein almost Gorenstein ring for each $\alpha\in \fkm:\fkm^2$.
\item Set $R=k[[t^4, t^7, t^9]]$. Let $\varphi : \m\times \m \to R; (x, y)\mapsto \alpha xy$, where $\fkm$ denotes the maximal ideal of $R$ and $\alpha\in \fkm:\fkm^2$. Then, $R\times_{\varphi} \fkm$ is a non-Gorenstein almost Gorenstein ring for each $\alpha\in \fkm:\fkm$, but not an almost Gorenstein ring for each $\alpha\in (\fkm:\fkm^2)\setminus (\fkm:\fkm)$.
\end{enumerate}
\end{ex}

\begin{proof}
(1): We note that $R$ is almost Gorenstein (\cite[Example 3.2(1)]{GMP}) and that $\fkm:\fkm^2=\langle t^{-3}, t^{-2}, t^{-1}\rangle_R$, $\fkm:\fkm=k[[t]]$. We denote by 
\[
v\colon \rmQ(R)\to\mathbb{Z}\cup\{\infty\}
\] 
the {\it normalized valuation} associated to $k[[t]]$.  Let $\alpha \in \fkm:\fkm^2=\langle t^{-3}, t^{-2}, t^{-1}\rangle_R$. If $v(\alpha)\ge 0$, then $\alpha\in k[[t]]=\fkm:\fkm$. 
If $-3\le v(\alpha)< 0$, then $0< v(\alpha^{-1})\le 3$; hence, $\alpha^{-1}\in k[[t]]=\fkm:\fkm$. Hence, $R\times_{\varphi} \fkm$ is almost Gorenstein by Theorem \ref{t55}. By Corollary \ref{c417}, $R$ is not Gorenstein.

(2): We note that $R$ is almost Gorenstein (\cite[Example 4.3(2)]{GMP}) and that $\fkm:\fkm^2=\langle 1, t, t^3, t^6\rangle_R$, $\fkm:\fkm=k[[t^4, t^5, t^7]]$. Let $\alpha \in \fkm:\fkm^2=\langle 1, t, t^3, t^6\rangle_R = k+kt+\sum_{n\ge 3} kt^n$, and write 
\[
\alpha=a_0+a_1 t +\sum_{n\ge 3} a_n t^n 
\]
for $a_i\in k$. Suppose that $\alpha^{-1}\in B$, equivalently, $1\in \alpha B$. This is also equivalent to saying that there exists $\beta\in B$ such that $\alpha \beta =1$. Write 
\[
\beta=b_0 +b_4t^4+b_5t^5+\sum_{m\ge 7}b_m t^m
\]
for $b_j\in k$. Then, 
\begin{align*}
1=&\alpha \beta \\
=& a_0b_0+a_1b_0 t+a_3b_0 t^3+(a_0b_4+a_4b_0) t^4 +(a_0b_5+a_1b_4+a_5b_0)t^5 +(a_1b_5+a_6b_0)t^6 +(\text{higher terms})
\end{align*}
Since $a_0, b_0\ne 0$, we observe that $a_1=a_3=0$. It also follows that $a_6=0$. Therefore, $\alpha=a_0 +a_4t^4+a_5t^5+\sum_{n\ge 7}a_n t^n\in B$. Thus, $\alpha^{-1}\in B$ implies that $\alpha\in B$. Therefore, by Theorem \ref{t55}, $R$ is almost Gorenstein if and only if $\alpha\in B$. On the other hand, by Corollary \ref{c417}, $R$ is not Gorenstein.
\end{proof}

\section{Appendix 1: The Gorenstein property of $\mathbb{Z}_2$-graded local rings}\label{s4}
The Gorenstein property of $\mathbb{Z}_2$-graded local rings is already well established (see, for example, \cite{C, F, R}).
Nevertheless, as we have found an alternative and insightful proof concerning the Gorenstein property of  $A=R\times_\varphi M$, we present it in the appendix.

In what follows, we assume the following setup. 

\begin{setup}\label{setup2}
Let $(R, \fkm)$ be a Noetherian local ring of dimension $d$, $M$ a nonzero finitely generated $R$-module, and $\varphi: M\times M \to R$ be an $R$-bilinear  homomorphism satisfying the following conditions.
\begin{enumerate}[\rm(1)] 
\item $\varphi(x, y)=\varphi(y,x)$ for all $x,y\in M$.
\item $\varphi(x,y)z=\varphi(y,z)x$ for all $x,y,z\in M$.
\item $\varphi(M\times M) \subseteq \fkm$.
\end{enumerate} 
 Set $A=R\times_\varphi M$ and $\fkn=\fkm \times M$.
\end{setup}

\subsection{The case of Artinian rings}\label{subsection4.1}



We first explore the case of Artinian rings. In addition to Setup \ref{setup2}, suppose that $d=0$. We set 
\[
M_{\varphi}=\{ x \in M \mid \text{$\varphi(x,y)=0$ for all $y\in M$}\}
\]
to state our assertions simply. Recall that $M$ is called {\it faithful} if $\Ann_R M=0$.


\begin{prop}\label{CMtype} 
The equality $(0):_A \fkn = [((0):_R \fkm) \cap \Ann_R M] \times [((0):_{M} \fkm)\cap M_\varphi]$ holds.
\end{prop}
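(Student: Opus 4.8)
The claim is an explicit identification of the socle $(0):_A\fkn$ of the Artinian $\bbZ_2$-graded ring $A=R\times_\varphi M$ as the stated product of submodules of $R\times M$. The plan is to compute the colon ideal directly from the multiplication rule. First I would note that, because $\fkn=\fkm\times M$ is generated by the set $\{(s,0)\mid s\in\fkm\}\cup\{(0,y)\mid y\in M\}$ (Proposition \ref{graded}(2)), an element $(a,x)\in A$ annihilates $\fkn$ if and only if $(a,x){\cdot}(s,0)=0$ for all $s\in\fkm$ and $(a,x){\cdot}(0,y)=0$ for all $y\in M$. Using the formula $(a,x){\cdot}(b,y)=(ab+\varphi(x,y),\,ay+bx)$, the first family of equations reads $(as,\,sx)=(0,0)$ for all $s\in\fkm$, i.e. $a\in(0):_R\fkm$ and $x\in(0):_M\fkm$; the second family reads $(\varphi(x,y),\,ay)=(0,0)$ for all $y\in M$, i.e. $aM=0$ (so $a\in\Ann_R M$) and $\varphi(x,y)=0$ for all $y\in M$ (so $x\in M_\varphi$).

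Combining the four conditions gives exactly $a\in\big((0):_R\fkm\big)\cap\Ann_R M$ and $x\in\big((0):_M\fkm\big)\cap M_\varphi$, which is the asserted equality. I would also remark that $((0):_R\fkm)\cap\Ann_R M$ is an ideal of $R$, that $((0):_M\fkm)\cap M_\varphi$ is an $R$-submodule of $M$, and that these satisfy the compatibility conditions of Proposition \ref{graded}(1) (indeed $\varphi(M,N)=0\subseteq I$ and $IM=0\subseteq N$ hold trivially here since $I\subseteq\Ann_R M$), so the right-hand side is genuinely a $\bbZ_2$-graded ideal of $A$, as it should be.

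I do not anticipate a real obstacle: the entire argument is a bookkeeping exercise with the defining multiplication, and the conditions (1),(2) on $\varphi$ in Setup \ref{setup2} are not even needed for this particular identity — only bilinearity and the product formula are used. The one point worth stating carefully is that testing against generators of $\fkn$ suffices, which is immediate since $(0):_A\fkn$ is defined by $(a,x)\fkn=0$ and $\fkn$ is generated as an ideal (equivalently, as an $A$-module) by those elements; alternatively one checks directly that the stated set is closed under multiplication by arbitrary elements of $\fkn$, which again follows at once from the product formula. So I would present this as a short direct verification.
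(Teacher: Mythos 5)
Your proof is correct and follows essentially the same direct verification as the paper: test $(a,x)$ against the generators $(b,0)$, $b\in\fkm$, and $(0,y)$, $y\in M$, of $\fkn$, read off the four conditions from the product formula, and check the converse. The extra remarks (sufficiency of testing on generators, conditions (1)--(2) on $\varphi$ not being needed, graded-ideal compatibility) are accurate but optional.
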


\begin{proof}
Let $(a, x)\in (0):_A \n$. Then, we have
$$(a, x)\cdot (b, 0)=(ab, bx)=(0, 0)$$
 for every $b\in \m$, which implies that $a\in (0):_R \m$ and $x\in (0):_M \m$. Similarly, we have
 $$(a, x)\cdot (0, y)=(\varphi(x, y), ay)=(0, 0)$$
for every $y\in M$, which implies that  $a\in \Ann_R M$ and $x\in M_{\varphi}$. 

Conversely, let $a\in ((0):_R \fkm) \cap \Ann_R M$ and $x\in ((0):_M \fkm) \cap M_{\varphi}$. Then
$$(a, x)\cdot (b, y)=(ab+\varphi(x, y), bx+ay)=(0, 0)$$
for every $(b, y)\in \m\times M=\n$, which induces $(a, x)\in (0):_A\n$.
\end{proof}


We prepare a lemma on $M _\varphi$ to characterize the Gorenstein property of $A$.

\begin{lem}\label{l43}
We have $M_\varphi  \subseteq  (0):_M \varphi(M\times M)$. Furthermore, if $M$ is a faithful $R$-module, then $M_\varphi  =  (0):_M \varphi(M\times M)$.
\end{lem}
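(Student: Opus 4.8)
\textbf{Proof plan for Lemma \ref{l43}.}

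The plan is to unwind the definitions on both sides and to exploit the symmetry condition $\varphi(x,y)=\varphi(y,x)$ together with the associativity-type identity $\varphi(x,y)z=\varphi(y,z)x$ of Setup \ref{setup2}. First I would establish the inclusion $M_\varphi \subseteq (0):_M \varphi(M\times M)$. Take $x \in M_\varphi$, so $\varphi(x,y)=0$ for all $y \in M$. I must show $\varphi(y,z)\,x = 0$ for all $y,z \in M$. By the identity $\varphi(y,z)x = \varphi(z,x)y$ (an instance of Definition \ref{def1}(2), after using symmetry to rearrange the roles of the variables), and since $\varphi(z,x)=\varphi(x,z)=0$ because $x \in M_\varphi$, we get $\varphi(y,z)x = 0 \cdot y = 0$. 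Hence $x$ is annihilated by every element of $\varphi(M\times M)$, which is the desired inclusion. This direction uses only the two axioms on $\varphi$ and no hypothesis on $M$.

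For the reverse inclusion under the hypothesis that $M$ is faithful, take $x \in (0):_M \varphi(M\times M)$, i.e. $\varphi(y,z)\,x = 0$ for all $y,z \in M$. I want to conclude $\varphi(x,w)=0$ for all $w \in M$. Fix $w \in M$ and consider the element $r := \varphi(x,w) \in R$. For an arbitrary $z \in M$, the identity $\varphi(x,w)z = \varphi(w,z)x$ (again Definition \ref{def1}(2)) shows $r\,z = \varphi(w,z)\,x$, and the right-hand side is $0$ because $x \in (0):_M\varphi(M\times M)$ and $\varphi(w,z) \in \varphi(M\times M)$. Thus $r\,z = 0$ for all $z \in M$, i.e. $r \in \Ann_R M = 0$ by faithfulness. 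Therefore $\varphi(x,w)=0$ for all $w$, so $x \in M_\varphi$.

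I do not anticipate a serious obstacle here; the only point requiring care is bookkeeping of which permutation of the variables in the identity $\varphi(x,y)z = \varphi(y,z)x$ to invoke at each step, and remembering to combine it with symmetry so that the vanishing factor $\varphi(\,\cdot\,,x)$ or $\varphi(x,\,\cdot\,)$ lands in the right slot. The faithfulness hypothesis is used exactly once, to pass from ``$r$ annihilates all of $M$'' to ``$r=0$''; without it one only gets $\varphi(x,w) \in \Ann_R M$, which is why the reverse inclusion genuinely needs that assumption.
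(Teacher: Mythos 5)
Your proof is correct and follows essentially the same route as the paper: both inclusions are obtained by applying the identity $\varphi(x,y)z=\varphi(y,z)x$ to move the vanishing factor into the right slot, and faithfulness is used exactly once to pass from $\varphi(x,w)M=0$ to $\varphi(x,w)=0$. The only cosmetic difference is that in the first inclusion the paper applies the identity directly as $\varphi(y,z)x=\varphi(x,y)z$ without needing to also invoke symmetry, whereas you use the permutation $\varphi(y,z)x=\varphi(z,x)y$ and then symmetrize, but this is an equivalent bookkeeping choice.
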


\begin{proof}
Let $x\in M_\varphi$. Since $\varphi(x, y)=0$ for any $y\in M$, we have 
$$\varphi(y, z)x=\varphi(x, y)z=0$$
for any $z\in M$, which implies that $x\in (0):_M \varphi(M\times M)$. 

Suppse that $M$ is a faithful $R$-module. Let $x\in (0):_M \varphi(M\times M)$. Then, we have 
$$\varphi(x, y)z=\varphi(y, z)x=0$$
for any $y, z\in M$, which implies that $\varphi(x, y)M=0$. Since $M$ is faithful, we obtain that $\varphi(x, y)=0$ for all $y\in M$.
\end{proof}

The following is a characterization of the Gorenstein property of $A$.

\begin{thm} \label{Gor} 
The following are equivalent.
\begin{enumerate}[\rm(1)] 
\item $A$ is a Gorenstein ring $($i.e. $A$ is self-injective$)$.
\item Either of the following holds:
\begin{enumerate}[\rm(i)] 
\item $M$ is isomorphic to the canonical $R$-module.
\item $R$ is Gorenstein and $M_{\varphi}=0$.
\end{enumerate}
\end{enumerate}
\end{thm}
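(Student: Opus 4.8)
The strategy is to reduce the Gorenstein property of $A$ to the statement that the socle $(0):_A \fkn$ is a one-dimensional $R/\fkm$-vector space, using the fact that for an Artinian local ring the Gorenstein (= self-injective) property is equivalent to the socle being simple. By Proposition \ref{CMtype} we already have the explicit description
\[
(0):_A \fkn = \bigl[((0):_R \fkm) \cap \Ann_R M\bigr] \times \bigl[((0):_{M} \fkm)\cap M_\varphi\bigr],
\]
so the task is to analyze when this $R/\fkm$-vector space is one-dimensional. First I would observe that $A$ is Artinian local with maximal ideal $\fkn$ (Proposition \ref{noether}, Corollary \ref{localc}), so $\ell_A((0):_A\fkn) = \ell_{R/\fkm}((0):_A\fkn)$ counts the socle dimension, and $A$ is Gorenstein iff this equals $1$. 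Thus the equivalence to be proved becomes: the direct sum above is one-dimensional iff (i) or (ii) holds.

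For the direction (2)$\Rightarrow$(1): in case (ii), $M_\varphi = 0$ forces the second summand to vanish, and then by Lemma \ref{l43} (or directly) the first summand is $(0):_R\fkm$ intersected with $\Ann_R M$; I would argue that when $R$ is Gorenstein and $M_\varphi=0$ the faithfulness-type considerations make the socle of $A$ equal to $\soc(R)\times 0$, which is one-dimensional — here a small point to check is that $\Ann_R M$ contains $\soc R$, which follows because $M_\varphi = 0$ together with Lemma \ref{l43} gives $(0):_M\varphi(M\times M)\cap(\text{something})$ small; more carefully, if $a\in\soc R$ then $aM$ is killed by $\fkm$ so lies in $\soc M$, and one shows $aM\subseteq M_\varphi=0$ using condition (2) of the bilinear form. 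In case (i), $M\cong \rmK_R$, so $A\cong R\times_\varphi \rmK_R$ and $\rmK_A \cong \Hom_R(A,\rmK_R)\cong \Hom_R(M,\rmK_R)\oplus \rmK_R \cong \Hom_R(\rmK_R,\rmK_R)\oplus \rmK_R \cong R\oplus \rmK_R \cong R\oplus M$ as $R$-modules, and the point is to upgrade this to an $A$-module isomorphism $A\cong \rmK_A$ using the action of Proposition \ref{dualaction}; this is the natural place to invoke the structure established in Section \ref{s2}. For the converse (1)$\Rightarrow$(2), assuming $\soc A$ is simple, I would split into cases according to whether the second summand $((0):_M\fkm)\cap M_\varphi$ is zero or not: if it is zero then the first summand must be one-dimensional, and I would deduce $R$ is Gorenstein ($\soc R$ cannot be larger) and $M_\varphi = 0$ (if $0\ne x\in M_\varphi$ then some $\fkm$-multiple gives a nonzero element of $((0):_M\fkm)\cap M_\varphi$, contradiction); if it is nonzero, then the first summand must be zero, in particular $((0):_R\fkm)\cap \Ann_R M = 0$, and combined with simplicity of the whole socle and a counting/duality argument I would conclude $M$ is a faithful module with $\soc M$ simple and $\ell_R(M) = \ell_R(\rmK_R) = \ell_R(R)$, forcing $M\cong \rmK_R$ — the cleanest route is Matlis duality: $\soc M$ simple means $M$ embeds in $\E_R(R/\fkm) = \rmK_R$, and faithfulness plus a length comparison (or the fact that $\rmK_R$ is the only faithful module with simple socle and the right length) gives $M\cong \rmK_R$.

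The main obstacle I anticipate is the case analysis in (1)$\Rightarrow$(2) when $((0):_M\fkm)\cap M_\varphi \ne 0$: one must show this situation forces $\Ann_R M = 0$ (faithfulness) and then pin down $M \cong \rmK_R$ precisely, rather than merely "$M$ has simple socle." The subtlety is that a priori $M$ could have simple socle without being a canonical module (e.g. it could be a proper submodule of $\rmK_R$), so the argument must genuinely use that $\soc A$ being exactly one-dimensional kills all other socle contributions and, via the bilinear form condition $\varphi(x,y)z = \varphi(y,z)x$, propagates enough rigidity to force $\ell_R(M)$ to be maximal. I would handle this by showing that when the second summand is nonzero, faithfulness of $M$ holds (otherwise $\Ann_R M \ne 0$ contributes to $\soc R \cap \Ann_R M$ or one produces extra socle), then apply Lemma \ref{l43} to identify $M_\varphi = (0):_M \varphi(M\times M)$, and finally use that $M \hookrightarrow \E_R(R/\fkm) = \rmK_R$ with $M$ faithful over an Artinian local ring implies $M = \rmK_R$ (since any proper submodule of $\rmK_R$ has nonzero annihilator when $R$ is Artinian — this uses Matlis duality: $\Ann_R(M) = \Ann_R(\rmK_R/M)^{\vee}$-type reasoning, and $\rmK_R/M \ne 0$ has nonzero annihilator hence so does $M$ unless $M = \rmK_R$).
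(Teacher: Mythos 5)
Your proposal is built on the same foundation as the paper's proof: reduce everything to computing the socle via Proposition \ref{CMtype}, and use Lemma \ref{l43} to relate $M_\varphi$ to $(0):_M \varphi(M\times M)$. The case split you use in $(1)\Rightarrow(2)$ (second summand zero vs.\ nonzero) is equivalent to the paper's split (faithful vs.\ not faithful), since over an Artinian local ring $\Ann_R M\ne 0$ precisely when it meets the socle nontrivially. So the core argument is the same, and the Matlis-duality finish for $M\cong\rmK_R$ matches the paper's appeal to \cite[Proposition 3.3.13]{BH}.

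Two places where you take a longer road than necessary, or leave a logical hole, are worth flagging. First, in $(2)\Rightarrow(1)$ case (i) you propose to exhibit an $A$-module isomorphism $A\cong\rmK_A$ using Proposition \ref{dualaction}; this is Fossum's theorem and is genuinely harder to verify by hand because the $\psi_{x,k}$-terms in the action involve $\varphi$. The paper simply observes that $M\cong\rmK_R$ forces $\Ann_R M=0$, so the socle of $A$ is contained in $0\times[(0):_M\fkm]$, which has length $\rmr(M)=1$; combined with nonvanishing of $\soc A$ this gives the result without touching the $A$-module structure of $\rmK_A$. Second, in your $(1)\Rightarrow(2)$ subcase where the second summand vanishes, you assert ``$R$ is Gorenstein ($\soc R$ cannot be larger)'' before establishing $M_\varphi=0$; but the first summand is $\soc R\cap\Ann_R M$, not $\soc R$ itself, so you first need $M_\varphi=0$, then the observation (which you do make, but in the other direction) that $M_\varphi=0$ together with $\varphi(M\times M)\subseteq\fkm$ forces $\soc R\subseteq\Ann_R M$, and only then does one-dimensionality of the first summand yield $\ell_R(\soc R)=1$. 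Reversing the order of these deductions makes the argument airtight and matches the paper's computation.
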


\begin{proof}

$(2)\Rightarrow(1)$: Firstly, suppose that $M$ is isomorphic to the canonical module of $R$. Then, $\Ann_R M=0$ and $\ell_R((0):_M \m)=1$. By Proposition \ref{CMtype}, 
$$0<\ell_A((0):_A \n)=\ell_A(0\times [((0):_M \fkm) \cap M_{\varphi}] )\le \ell_A(0\times [(0):_M \fkm] )=\ell_R((0):_M \fkm)=1,$$ 
where the fourth equality follows from $R/\fkm \cong A/\fkn$. This  implies that $A$ is a Gorenstein ring.

Secondly, suppose that $R$ is a Gorenstein ring and $M_\varphi =0$.
Then, $\ell_R((0):_R \m) =1$ and we have 
$$0<\ell_A((0):_A \n)=\ell_A([((0):_R \fkm) \cap \Ann_R M]\times 0)\le \ell_A([(0):_R \fkm] \times 0)=\ell_R((0):_R \m)=1.$$
Therefore, $A$ is a Gorenstein ring.

$(1)\Rightarrow(2)$: Suppose that $A$ is a Gorenstein ring.
Firstly, suppose that $M$ is faithful. By Lemma \ref{l43}, 
$$((0):_{M} \fkm)\cap M_\varphi=((0):_M \fkm) \cap ((0):_M \varphi(M\times M))=(0):_M \m.$$
It follows that
$$1=\ell_A((0):_A \fkn) =\ell_A(0 \times [(0):_M \m])=\ell_R((0):_M\m),$$
which induces that $M$ is isomorphic to the canonical module of $R$ (\cite[Proposition 3.3.13]{BH}).

Secondly, suppose that $M$ is not faithful. Then, 
\begin{align}\label{eqgor}
1=\ell_A((0):_A \fkn)=\ell_R(((0):_R \fkm) \cap \Ann_R M)+\ell_R(((0):_{M} \fkm)\cap M_\varphi)
\end{align}
and $((0):_R \fkm) \cap \Ann_R M\neq 0$. Hence, $((0):_{M} \fkm)\cap M_\varphi=0$. It follows that $M_\varphi=0$.
On the other hand, note that
$$\varphi(((0):_R\m)M\times M)=((0):_R \m)\cdot \varphi(M\times M)=0$$
by the assumption that $\varphi(M\times M) \subseteq \fkm$. Hence, $((0):_R\m)M\subseteq M_\varphi=0$. Therefore, we obtain that $(0):_R\m\subseteq \Ann_R M$, and which induces that $R$ is a Gorenstein ring by \eqref{eqgor}. 
\end{proof}

We should emphasis that the condition (2)(ii) of Theorem \ref{Gor} does not follow from the following Fossum's result \cite[Theorem]{F}, and does not appear in the characterization of the Gorenstein property of idealizations (\cite[(7) Theorem]{R}).

\begin{fact}
Let $S$ be a Noetherian local ring. Suppose that $R$ is a Noetherian local ring and $\rmK_R$ is the canonical module of $R$. If $0 \to \rmK_R \to S \to R \to 0$ is a commutative extension of $A$ by $M$, then $S$ is Gorenstein.
\end{fact}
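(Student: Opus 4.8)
The plan is to show first that $S$ is Cohen--Macaulay and then that its type is one. Identify $\rmK_R$ with its image $I$ in $S$; then $I$ is an ideal with $I^{2}=0$, $S/I\cong R$, and $I\cong\rmK_R$ as modules over $R=S/I$. Since $I$ is nilpotent, $\dim S=\dim R=:d$, and $R$ is Cohen--Macaulay because it admits a canonical module. Viewed as $S$-modules, both $I$ and $R$ are killed by $I$, so their depth over $S$ equals their depth over $R$, namely $d$ in both cases (for $I\cong\rmK_R$ since $\depth_R\rmK_R=d$); applying the depth lemma to $0\to I\to S\to R\to 0$ then gives $\depth_S S\ge d$, so $S$ is Cohen--Macaulay of dimension $d$.

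Next I would reduce to $d=0$. Choose a system of parameters $x_{1},\dots,x_{d}$ of $R$ — an $R$-regular sequence — and lift it to $y_{1},\dots,y_{d}\in S$; set $\fka=(x_{1},\dots,x_{d})R$ and $\fkb=(y_{1},\dots,y_{d})S$. Then $y_{1},\dots,y_{d}$ is a system of parameters of the Cohen--Macaulay ring $S$, hence $S$-regular, and $S$ is Gorenstein if and only if $S/\fkb$ is. Since the $y_{i}$ act on the $R$-module $I$ exactly as the $x_{i}$ do, the groups $\Tor_{i}^{S}(R,S/\fkb)$, computed through the Koszul complex of $y_{1},\dots,y_{d}$, are the Koszul homology of $x_{1},\dots,x_{d}$ on $R$ and hence vanish for $i>0$; therefore reducing the extension modulo $\fkb$ stays short exact,
\[
0\to I/\fka I\to S/\fkb\to R/\fka\to 0 ,
\]
where $R/\fka$ is Artinian with canonical module $\rmK_R/\fka\rmK_R$, the ideal $I/\fka I$ still squares to zero, and $I/\fka I\cong\rmK_R/\fka\rmK_R=\rmK_{R/\fka}$ as $R/\fka$-modules. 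So it suffices to treat the Artinian case.

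Assume now $R$ is Artinian, so $\rmK_R=\E_R(k)$ with residue field $k=R/\fkm$, and let $\fkn$ be the maximal ideal of $S$. The key point is that $(0:_S I)=I$: the inclusion $\supseteq$ is $I^{2}=0$, and for $s\in S$ the product $sI$ depends only on $s\bmod I\in R$, so $sI=0$ forces $s\bmod I\in\Ann_R I=\Ann_R\rmK_R=0$, the canonical module being faithful. As $I\subseteq\fkn$, this yields $\soc(S)=(0:_S\fkn)\subseteq(0:_S I)=I$; and on the $R$-module $I$ the ideal $\fkn$ acts through $\fkn/I=\fkm$, so $\soc(S)=(0:_I\fkm)\cong(0:_{\rmK_R}\fkm)=\soc(\E_R(k))=k$. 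Hence $\dim_k\soc(S)=1$, the Artinian local ring $S$ is Gorenstein, and unwinding the reduction completes the proof.

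I expect the reduction to dimension zero to be the main obstacle: one must verify simultaneously that the lifted parameters are $S$-regular, that passing to the quotient preserves exactness of the extension (which is exactly where the vanishing of $\Tor^{S}_{>0}(R,S/\fkb)$ enters), that the square-zero condition is inherited, and that the distinguished submodule remains isomorphic to the canonical module of the new base ring. Once $d=0$, the heart of the matter is simply that the faithful canonical module sitting inside $S$ as an ideal traps the entire socle of $S$.
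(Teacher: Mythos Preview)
The paper does not supply a proof of this statement; it is recorded as a cited fact attributed to Fossum and is included only to contrast Fossum's theorem with the paper's own Theorem~\ref{Gor} on the Gorenstein property of $\mathbb{Z}_2$-graded rings (the point being that condition (2)(ii) there is not captured by Fossum's result). So there is no argument in the paper to compare against.

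That said, your proof is correct and is essentially the classical argument for Fossum's theorem. The reduction to dimension zero is handled carefully: the lifted parameters $y_1,\dots,y_d$ form a system of parameters of $S$ because $S/\fkb$ surjects onto the Artinian ring $R/\fka$ with nilpotent kernel, and the Koszul computation of $\Tor_{>0}^S(R,S/\fkb)$ is exactly what is needed to keep the extension short exact after reduction. In the Artinian case, the heart of the matter is precisely as you identified: faithfulness of $\rmK_R$ forces $(0:_S I)=I$, so the socle of $S$ lands inside $I$, where it is computed as $(0:_{\rmK_R}\fkm)\cong k$. One cosmetic remark: the statement in the paper contains an evident typo (``extension of $A$ by $M$'' should read ``extension of $R$ by $\rmK_R$''), which you have silently and correctly interpreted.
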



We explore the condition (2)(ii) of Theorem \ref{Gor} in details.



\begin{prop}\label{l42}
Choose elements $x_1,\dots, x_s$ and $y_1, \dots, y_t$ such that
\begin{align*}
M=\langle x_1, x_2, \dots, x_s\rangle \quad \text{and} \quad  
(0):_M \fkm=\langle y_1, y_2, \dots, y_t \rangle,
\end{align*}
where $s$ is  the number of minimal generators $\mu_R(M)=\ell_R(M/\fkm M)$ of $M$ and $t$ is the Cohen-Macaulay type $\rmr_R(M)=\ell_R((0):_M \fkm)$ of $M$.
Then the following are equivalent.
\begin{enumerate}[\rm(1)] 
\item $M_\varphi=0$.
\item The product of the matrices
\[
\begin{pmatrix}
&&\\
&\varphi(x_i, y_j)&\\
&&
\end{pmatrix}
\begin{pmatrix}
c_1 \\
c_2\\
\vdots \\
c_t
\end{pmatrix}
\]
is nonzero for all unit elements $c_1,c_2, \dots, c_t\in R^\times$.
\end{enumerate}
\end{prop}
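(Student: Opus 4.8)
The plan is to reduce everything to linear algebra over the residue field $k:=R/\fkm$. First observe that $M_\varphi$ is an $R$-submodule of $M$ (immediate from the $R$-bilinearity of $\varphi$), and that, $R$ being Artinian local, every nonzero finitely generated $R$-module has nonzero socle; hence $M_\varphi=0$ if and only if $(0):_{M_\varphi}\fkm=M_\varphi\cap\big((0):_M\fkm\big)=0$. Thus the whole question lives inside the socle $(0):_M\fkm$, which, being annihilated by $\fkm$, is a $k$-vector space; moreover $y_1,\dots,y_t$ is a $k$-basis of it, since a generating set of cardinality $t=\ell_R\big((0):_M\fkm\big)=\dim_k\big((0):_M\fkm\big)$ is automatically a basis.

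Next I would set up the dictionary between condition $(2)$ and membership in $M_\varphi$. For $c_1,\dots,c_t\in R$ put $z=\sum_{j=1}^t c_jy_j\in(0):_M\fkm$; because the $x_i$ generate $M$ and $\varphi$ is $R$-bilinear, $z\in M_\varphi$ precisely when $\varphi(z,x_i)=\sum_j c_j\varphi(x_i,y_j)=0$ for every $i$, i.e.\ precisely when the matrix product $\big(\varphi(x_i,y_j)\big)(c_1,\dots,c_t)$ vanishes in $R^s$. (Since $\fkm y_j=0$ forces $\varphi(x_i,y_j)\in(0):_R\fkm$, this product depends only on the residues $c_j\bmod\fkm$, so it is really governed by the induced $k$-linear map $k^t\to\big((0):_R\fkm\big)^s$, and $M_\varphi\cap\big((0):_M\fkm\big)=0$ is exactly injectivity of that map.) With this in hand, $(1)\Rightarrow(2)$ is immediate: if $M_\varphi=0$ and $c_1,\dots,c_t\in R^\times$, then $z=\sum_j c_jy_j$ is nonzero, its image $\sum_j\overline{c_j}\,y_j$ in the $k$-space $(0):_M\fkm$ being nonzero because the $y_j$ are $k$-linearly independent and each $\overline{c_j}\neq0$; hence $z\notin M_\varphi$ and the matrix product does not vanish.

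For $(2)\Rightarrow(1)$ I would argue contrapositively. Assume $M_\varphi\neq0$ and choose a nonzero $w\in M_\varphi\cap\big((0):_M\fkm\big)$. As $w$ is a nonzero vector of the $t$-dimensional $k$-space $(0):_M\fkm$, there is a $k$-basis $y_1,\dots,y_t$ of $(0):_M\fkm$ with $w=y_1+\cdots+y_t$ (equivalently: an invertible $t\times t$ matrix over $k$ carrying the all-ones vector to the coordinate vector of $w$); for this choice of generators, taking $c_1=\cdots=c_t=1\in R^\times$ gives $\sum_j c_jy_j=w\in M_\varphi$, so $\big(\varphi(x_i,y_j)\big)(1,\dots,1)=\big(\varphi(x_i,w)\big)_i=0$ and $(2)$ fails. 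The step I expect to be the real obstacle is precisely this one: passing from ``the socle of $M_\varphi$ is nonzero'' to ``some coefficient vector consisting of units kills it''. This is what forces condition $(2)$ to be read with the generating systems $x_1,\dots,x_s$ and $y_1,\dots,y_t$ allowed to vary, so that the basis may be adapted to $w$; once that is granted, the reduction to the socle, the dictionary, and the direction $(1)\Rightarrow(2)$ are all purely formal.
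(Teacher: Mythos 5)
Your proof is essentially correct, and you have put your finger on a genuine imprecision in the paper's statement of condition (2). The skeleton of your argument --- reducing to the socle $(0):_M\fkm$ (using that over the Artinian local ring $R$ every nonzero module has nonzero socle), setting up the dictionary ``$\sum_j c_j y_j\in M_\varphi$ iff $\bigl(\varphi(x_i,y_j)\bigr)(c_j)^T=0$'', and deducing $(1)\Rightarrow(2)$ --- matches the paper exactly.

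The delicate point you flagged in $(2)\Rightarrow(1)$ is a real one. The paper's own proof writes an arbitrary socle element as $y=\sum_j c_jy_j$ with $c_j\in R^\times\cup\{0\}$ (zeros allowed!), derives the equivalence chain, and then asserts ``we obtain $(1)\Leftrightarrow(2)$''. But the stated condition $(2)$ quantifies only over tuples with \emph{all} $c_j\in R^\times$, which for a fixed choice of $y_1,\dots,y_t$ is a strictly weaker test. Indeed, taken literally the statement is false: let $R=k[X]/(X^2)$, $M=k^{\oplus 2}$ (so $\fkm M=0$, and $x_j=y_j$), and define $\varphi$ symmetric with $\varphi(y_2,y_2)=X$, all other values $0$. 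Then $M_\varphi=ky_1\neq0$, yet the matrix is $\left(\begin{smallmatrix}0&0\\0&X\end{smallmatrix}\right)$, whose product with any all-unit vector $(c_1,c_2)$ is $(0,c_2X)\neq\mathbf{0}$.

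Your fix --- letting the generating system $y_1,\dots,y_t$ vary so that a basis adapted to a chosen nonzero $w\in M_\varphi\cap\bigl((0):_M\fkm\bigr)$ can be used with $c_1=\dots=c_t=1$ --- repairs the statement correctly. The alternative repair, which is what the paper's iff-chain actually establishes, is to allow $c_j\in R^\times\cup\{0\}$, not all zero; the two readings are equivalent (both amount to injectivity of the induced $k$-linear map on the socle), and both are consistent with how the proposition is later used in Theorem \ref{Gormatrix}, where the authors cite it to deduce ``$M_\varphi=0$ iff $C_\varphi\mathbf{x}=\mathbf{0}$ has only trivial solutions.'' So: same skeleton as the paper, but you located and correctly closed a gap in the $(2)\Rightarrow(1)$ direction that the paper's proof glosses over.
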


\begin{proof}
Let $y\in (0):_M \m$ and write $y=c_1y_1+c_2y_2+\cdots +c_ty_t$ with $c_1, c_2, \dots, c_t\in R^\times \cup \{0\} $.
Then, we can confirm that
\begin{eqnarray*}
&& y\in  ((0):_{M} \m) \cap M_\varphi \\
 &\Leftrightarrow& \varphi(x, y)=0 \ \text{for all}\ x\in M\\
&\Leftrightarrow& \sum_{j=1}^t c_j\varphi(x, y_j)=0 \ \text{for all}\ x\in M\\
&\Leftrightarrow& \sum_{j=1}^t c_j\varphi(x_i, y_j)=0\ \text{for all}\ 1\le i\le s\\
&\Leftrightarrow&
\begin{pmatrix}
&&\\
&\varphi(x_i, y_j)&\\
&&
\end{pmatrix}
\begin{pmatrix}
c_1 \\
c_2\\
\vdots \\
c_t
\end{pmatrix}
=\mathbf{0}.
\end{eqnarray*}

Since $M_{\varphi}=0$ if and only if $((0):_{M}\m) \cap M_{\varphi}=0$, we obtain the equivalence $(1)\Leftrightarrow (2)$. 
\end{proof}

Let $(R, \m)$ be an Artinian Gorenstein local ring with the residue field $K$, and $M=K^{\oplus s}$ for some $s>0$. Let $\varphi: M\times M \to R$ be an  $R$-bilinear  homomorphism satisfying  conditions of Definition \ref{def1}.
Then, $\varphi(M\times M)\subseteq (0):_R\m$. Since $R$ is Gorenstein, there exists a natural isomorphism $\iota: (0):_R\m \to K$. 
With the notation, we obtain the following.

\begin{thm} \label{Gormatrix}
Suppose that $(R, \fkm)$ is an Artinian Gorenstein local ring containing a field $K$ such that a canonical homomorphism $K \to R \to R/\fkm$ of rings is bijective. Suppose that $M=K^{\oplus s}$ for some $s>0$. Suppose that $R$ is not a field.
Choose a $K$-basis $\mathbf{e}_1, \mathbf{e}_2, \dots, \mathbf{e}_s$ of $M$. Set 
\[
\Phi=\left\{ \iota \circ \varphi:M\times M \overset{\varphi}{\to} (0):_R \fkm \cong K \ \middle| \ 
\begin{matrix}
\text{$\varphi$ is an $R$-bilinear homomorphism such that } \\
\text{$\varphi(x, y)=\varphi(y,x)$ and $\varphi(x,y)z=\varphi(y,z)x$ for all $x,y,z\in M$}
\end{matrix}\right\}
\]
and 
\[
S=\left\{ C=(c_{ij})\in \rmM(s, K) \mid c_{ij}=c_{ji} \text{ for all $1\le i\le j \le \ell$} \right\},
\]
where $\rmM(s, K)$ denotes the set of $s\times s$ matrices whose entries are in $K$. 
Then
\[
f: \Phi \to S; \ \ \iota \circ \varphi \mapsto C_\varphi:= ((\iota\circ\varphi)(\mathbf{e}_i, \mathbf{e}_j))
\] 
is a one-to-one correspondence. With the correspondence, we obtain that $A=R\times_\varphi M$ is Gorenstein if and only if $\det C_\varphi\ne 0$. 
\end{thm}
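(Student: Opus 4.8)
The plan is to establish the bijection $f$ first and then translate the Gorenstein criterion from Theorem~\ref{Gor}(2)(ii) into the nonvanishing of $\det C_\varphi$. For the bijectivity of $f$: any $R$-bilinear map $\varphi\colon M\times M\to R$ with image in $(0):_R\fkm$ is determined by its values $\varphi(\mathbf{e}_i,\mathbf{e}_j)$ since $M=K^{\oplus s}$ is $K$-free on $\mathbf{e}_1,\dots,\mathbf{e}_s$ and $R$-scalars act through $K=R/\fkm$ on elements of $(0):_R\fkm$; composing with the isomorphism $\iota$ records these values as a matrix in $\mathrm{M}(s,K)$. Symmetry $\varphi(x,y)=\varphi(y,x)$ is exactly symmetry of $C_\varphi$. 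The second condition $\varphi(x,y)z=\varphi(y,z)x$ needs care: expanded on basis vectors it reads $\varphi(\mathbf{e}_i,\mathbf{e}_j)\mathbf{e}_k=\varphi(\mathbf{e}_j,\mathbf{e}_k)\mathbf{e}_i$ in $M$, but $\varphi(\mathbf{e}_i,\mathbf{e}_j)\in(0):_R\fkm$ annihilates $\fkm$, so it acts on $\mathbf{e}_k\in M=K^{\oplus s}$ through its residue class, which is $0$ since $R$ is not a field (so $(0):_R\fkm\subseteq\fkm$). Hence both sides vanish automatically and condition (2) of Definition~\ref{def1} imposes no constraint beyond symmetry; this shows $f$ is a well-defined bijection onto $S$.

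Next I would apply Theorem~\ref{Gor}: here $R$ is Gorenstein, so $A=R\times_\varphi M$ is Gorenstein if and only if $M$ is isomorphic to the canonical module of $R$, or $M_\varphi=0$. Since $M=K^{\oplus s}$ and $R$ is Artinian Gorenstein but not a field, $M$ cannot be the canonical module $\rmK_R\cong R$ unless $s=1$ and $R=K$, which is excluded; more carefully, $\rmr_R(\rmK_R)=1$ while $\rmr_R(K^{\oplus s})=s$, and $K^{\oplus s}\cong R$ would force $R$ to be a field. So the only route to Gorensteinness is $M_\varphi=0$, and the theorem reduces to showing $M_\varphi=0\iff\det C_\varphi\ne 0$.

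For this last equivalence I would invoke Proposition~\ref{l42} with the natural generating set: take $x_i=\mathbf{e}_i$ for $M$, and for $(0):_M\fkm$ note that $\fkm$ acts as zero on all of $M=K^{\oplus s}$, so $(0):_M\fkm=M$ and we may take $y_j=\mathbf{e}_j$, $j=1,\dots,s$ (so $t=s$). Then the matrix $(\varphi(x_i,y_j))$ appearing in Proposition~\ref{l42}(2), after applying $\iota$, is exactly $C_\varphi$, and the unit elements $c_j\in R^\times$ reduce mod $\fkm$ to nonzero scalars in $K$; conversely every nonzero vector in $K^s$ lifts to a tuple of units. Thus $M_\varphi=0$ is equivalent to: $C_\varphi v\ne 0$ for every nonzero $v\in K^s$, i.e. $C_\varphi$ is invertible over the field $K$, i.e. $\det C_\varphi\ne 0$.

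The main obstacle is the first paragraph: verifying that condition (2) of Definition~\ref{def1} is vacuous in this setting and that $f$ is genuinely well-defined and surjective. One must check both that every symmetric $C\in S$ actually arises from a legitimate $\varphi$ (build $\varphi$ from $C$ and confirm it satisfies both defining conditions using $(0):_R\fkm\subseteq\fkm$) and that distinct $\varphi$ give distinct matrices (immediate from $K$-freeness of $M$). The hypothesis that $R$ is not a field is used precisely here to kill condition (2); without it the claim can fail. Everything after that is a routine application of Theorems~\ref{Gor} and Proposition~\ref{l42} together with the identification $(0):_M\fkm=M$.
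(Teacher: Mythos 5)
Your proof is correct and follows essentially the same route as the paper: show condition (2) of Definition \ref{def1} is vacuous because $(0):_R\fkm\subseteq\fkm$ annihilates $M=K^{\oplus s}$ (so $f$ is a bijection), then combine Theorem \ref{Gor} with Proposition \ref{l42} to reduce the Gorenstein criterion to $\det C_\varphi\neq 0$. You are somewhat more explicit than the paper in ruling out alternative (2)(i) of Theorem \ref{Gor}; one small imprecision is the claim that every nonzero vector in $K^s$ lifts to a tuple of units (false, e.g.\ $(1,0,\dots,0)$), but this is harmless since $M_\varphi$ is exactly the kernel of the $K$-linear map given by $C_\varphi$, so $M_\varphi=0$ iff $\det C_\varphi\neq 0$ regardless.
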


\begin{proof}
Let $\iota \circ \varphi\in \Phi$. Since $\varphi(\mathbf{e}_i, \mathbf{e}_j)=\varphi(\mathbf{e}_j, \mathbf{e}_i)$ for all $1\le i\le j \le \ell$, the map $f$ is well-defined.
On the other hand, for $C=(c_{ij})\in S$, we define the map 
$$\varphi_{C}: M\times M\to (0):_R\m,\  (\mathbf{e}_i, \mathbf{e}_j)\mapsto c_{ij}\cdot \xi,$$
where $\xi\in R$ is a $K$-base of $(0):_R \m$. 
 Then, $\varphi_C$ is an $R$-bilinear homomorphism such that $\varphi_C(x, y)=\varphi_C(y,x)$ for all $x,y\in M$ because $c_{ij}=c_{ji}$. 
By recalling that $M$ is a $K$-vector space, $\varphi_C(M\times M)\subseteq (0):_R\m$. Hence, 
 $$\varphi(x, y)z=0=\varphi(y, z)x$$ for any $x, y, z\in M$. Therefore, the map
 $$g: S\to \Phi; \ \ C\mapsto \iota \circ \varphi_C$$
 is well-defined. It is easy to check that $f\circ g=id_S$ and $g\circ f=id_{\Phi}$, hence $f$ is bijective.

In contrast, by Proposition \ref{l42}, $M_\varphi=0$ if and only if the equation $C_{\varphi}\mathbf{x}=\mathbf{0}$ has only trivial solutions, which is equivalent to $\det C_\varphi \neq 0$. Therefore, we obtain that $A=R\times_{\varphi} M$ is a Gorenstein ring if and only if $\det C_\varphi\neq 0$ by Theorem \ref{Gor}, as desired (notice that $A$ is also a local ring since $\varphi(M\times M)\subseteq (0):_R \m$).
\end{proof}

Furthermore, we can determine the defining ideal of $R\times_{\varphi_C} M$.

\begin{prop}\label{p46}
Suppose that $(R, \fkm)$ is an Artinian Gorenstein local ring 
containing a field $K$ such that a canonical homomorphism $K \to R \to R/\fkm$ of rings is bijective. Suppose that $R\ne K$, and choose $\xi\in \fkm$ such that $R\xi=(0):_R \m$.  Suppose that $M=K^{\oplus s}$ for some $s>0$. 
Choose a $K$-basis $\mathbf{e}_1, \mathbf{e}_2, \dots, \mathbf{e}_s$ of $M$. Let $C=(c_{ij})$ be a symmetric matrix  whose entries $c_{ij}$ are in $K$. Let $\varphi_C$ denotes the defining map of $R\times_{\varphi_C} M$, that is, 
$$\varphi_{C}: M\times M\to R;\  (\mathbf{e}_i, \mathbf{e}_j)\mapsto c_{ij}\cdot \xi.$$
Let 
\[
f: R[X_1, X_2, \dots, X_s] \to R\times_{\varphi_C} M; \ X_i \mapsto (0, \mathbf{e}_i)
\]
be an $R$-algebra homomorphism, where $R[X_1, X_2, \dots, X_s]$ denotes the polynomial ring over $R$. Then, 
\[
\Ker f=(X_iX_j - c_{ij}\xi \mid 1\le i,j\le s) + \fkm(X_1, X_2, \dots,X_s).
\]
\end{prop}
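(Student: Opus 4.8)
The plan is to realize $A:=R\times_{\varphi_C}M$ as the quotient $P/J$, where $P:=R[X_1,\dots,X_s]$ and $J:=(X_iX_j-c_{ij}\xi\mid 1\le i,j\le s)+\fkm(X_1,\dots,X_s)$. First I would show $J\subseteq\Ker f$, which produces a surjection $\overline f\colon P/J\twoheadrightarrow A$; then I would bound $\ell_R(P/J)$ from above by $\ell_R(A)$. Since $\overline f$ is then a surjection of $R$-modules of the same finite length over the Artinian ring $R$, it must be an isomorphism, and hence $\Ker f=J$.

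Checking $J\subseteq\Ker f$ is a direct computation with the multiplication of $A$. For the quadratic relations, $f(X_iX_j)=(0,\mathbf{e}_i){\cdot}(0,\mathbf{e}_j)=(\varphi_C(\mathbf{e}_i,\mathbf{e}_j),0)=(c_{ij}\xi,0)=f(c_{ij}\xi)$, using $c_{ij}\in K\subseteq R$; for the relations $aX_i$ with $a\in\fkm$, $f(aX_i)=(a,0){\cdot}(0,\mathbf{e}_i)=(0,a\mathbf{e}_i)=(0,0)$ because $M\cong(R/\fkm)^{\oplus s}$ is annihilated by $\fkm$. Surjectivity of $f$ is clear: if $m=\sum_i\lambda_i\mathbf{e}_i$ with $\lambda_i\in K$, then $(a,m)=f\bigl(a+\sum_i\lambda_iX_i\bigr)$.

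The main work is the length estimate $\ell_R(P/J)\le\ell_R(R)+s$. I would prove that $P/J$ is generated over $R$ by the images of $1,X_1,\dots,X_s$: modulo $J$, each quadratic monomial satisfies $X_iX_j\equiv c_{ij}\xi\in R$, while each monomial of degree $\ge3$ lies in $J$ --- it is divisible by some triple product $X_iX_jX_k$, and $X_iX_jX_k\equiv c_{ij}\xi X_k\pmod J$ with $\xi X_k\in\fkm(X_1,\dots,X_s)\subseteq J$ since $\xi\in\fkm$. The submodule of $P/J$ generated by the images of $X_1,\dots,X_s$ is killed by $\fkm$ (as $\fkm X_i\subseteq J$) and so has length at most $s$, while the cyclic submodule generated by the image of $1$ has length at most $\ell_R(R)$; together these give the bound.

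To conclude, $A=R\oplus M$ as $R$-modules with $\ell_R(M)=s$, so $\ell_R(A)=\ell_R(R)+s\ge\ell_R(P/J)$; combined with the surjection $\overline f$ this forces $P/J\cong A$, i.e. $\Ker f=J$. I expect the one genuinely substantive step to be the reduction of higher-degree monomials modulo $J$ in the third paragraph --- it is where $\xi\in\fkm$ is used and where the ideal structure of $J$ does the real work --- while the remaining arguments are routine manipulations with the product on $A$ and with lengths over $R$.
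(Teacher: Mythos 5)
Your proof is correct and follows essentially the same strategy as the paper: verify the inclusion $J\subseteq\Ker f$, bound $\ell_R(P/J)\le\ell_R(R)+s$ by splitting $P/J$ into the piece spanned by $\bar{X}_1,\dots,\bar{X}_s$ (killed by $\fkm$, length $\le s$) and a cyclic $R$-module piece (length $\le\ell_R(R)$), and compare with $\ell_R(A)=\ell_R(R)+s$ to force equality. The paper packages the length estimate via the exact sequence $0\to(R+I)/I\to S/I\to S/(R+I)\to 0$ and simplifies $S/(R+I)$ directly rather than explicitly reducing higher-degree monomials, but this is only a cosmetic difference.
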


\begin{proof}
Set $S:=R[X_1, X_2, \dots, X_s]$ and $I:=(X_iX_j - c_{ij}\xi \mid 1\le i,j\le s) + \fkm(X_1, X_2, \dots,X_s)$. Consider an exact sequence
\[
0 \to (R+I)/I \to S/I \to S/(R + I) \to 0
\]
of $R$-modules. Then, there exists a canonical surjective homomorphism $R \to R/(I\cap R) \cong (R+I)/I$ and an isomorphism $S/(R + I) =S/[R + (X_iX_j  \mid 1\le i,j\le s) + \fkm(X_1, X_2, \dots,X_s)] \cong \sum_{i=1}^s KX_i$. Hence, 
\[
\ell_R(S/I) = \ell_R((R+I)/I) + \ell_R(S/(R + I)) \le \ell_R(R) + s.
\] 
On the other hand, since $I\subseteq \Ker f$, we have $\ell_R(S/\Ker f) \le \ell_R(S/I)$. In addition, $\ell_R(S/\Ker f) = \ell_R(R\times_{\varphi_C} M) = \ell_R(R) + \ell_R(M) = \ell_R(R) + s$. It follows that $\Ker f=I$.
\end{proof}

By combining Theorem \ref{Gormatrix} and Proposition \ref{p46}, we obtain a family of certain Artinian Gorenstein rings.


\begin{cor}\label{c47}
Let $s>0$. Suppose that $(R, \fkm)$ is an Artinian Gorenstein local ring containing a field $K$ such that a canonical homomorphism $K \to R \to R/\fkm$ of rings is bijective. Suppose that $R\ne K$, and choose $\xi\in \fkm$ such that $R\xi=(0):_R \m$. Let $C=(c_{ij})$ be a symmetric matrix  whose entries are in $K$. Set 
\[
A_C=R[X_1, X_2, \dots, X_s]/[(X_iX_j - c_{ij}\xi \mid 1\le i,j\le s) + \fkm(X_1, X_2, \dots,X_s)].
\]
Then, $A_C$ is Gorenstein if and only if $\det C\ne 0$.
\end{cor}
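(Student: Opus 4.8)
The plan is to deduce Corollary \ref{c47} immediately from Proposition \ref{p46} and Theorem \ref{Gormatrix}; essentially no new work is needed beyond tracking one unit scalar. First I would set $M = K^{\oplus s}$ with $K$-basis $\mathbf{e}_1,\dots,\mathbf{e}_s$ and let $\varphi_C : M\times M \to R$ be the $R$-bilinear map determined by $\varphi_C(\mathbf{e}_i,\mathbf{e}_j) = c_{ij}\xi$. It satisfies the conditions of Definition \ref{def1}: symmetry follows from $c_{ij} = c_{ji}$, and the identity $\varphi_C(x,y)z = \varphi_C(y,z)x$ holds because $\varphi_C(M\times M)\subseteq (0):_R\fkm$ annihilates the $K$-vector space $M$. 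By Proposition \ref{p46}, the kernel of the $R$-algebra homomorphism $R[X_1,\dots,X_s]\to R\times_{\varphi_C}M$, $X_i\mapsto(0,\mathbf{e}_i)$, is exactly the ideal defining $A_C$, so $A_C \cong R\times_{\varphi_C}M$ as rings.

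Next I would feed $\varphi_C$ into Theorem \ref{Gormatrix}. Let $\iota : (0):_R\fkm \xrightarrow{\ \sim\ } K$ be the isomorphism fixed there. Since $R\xi = (0):_R\fkm$ and this module is one-dimensional over $R/\fkm\cong K$, the scalar $\iota(\xi)\in K$ is nonzero, hence a unit. Using $K$-linearity of $\iota$, the matrix attached to $\varphi_C$ in Theorem \ref{Gormatrix} is
\[
C_{\varphi_C} = \big((\iota\circ\varphi_C)(\mathbf{e}_i,\mathbf{e}_j)\big) = \big(\iota(c_{ij}\xi)\big) = \iota(\xi)\cdot C,
\]
so $\det C_{\varphi_C} = \iota(\xi)^s\det C$. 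As $\iota(\xi)$ is a unit of the field $K$, we have $\det C_{\varphi_C}\ne 0$ if and only if $\det C\ne 0$. Theorem \ref{Gormatrix} then gives that $A_C\cong R\times_{\varphi_C}M$ is Gorenstein if and only if $\det C_{\varphi_C}\ne 0$, hence if and only if $\det C\ne 0$, which is the assertion.

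I do not anticipate a real obstacle here, since both ingredients are in hand; the only point needing care is the bookkeeping distinction between $C$ (entries in $K$) and $C_{\varphi_C}$ (recording the values of $\iota\circ\varphi_C$), which differ only by the unit $\iota(\xi)$ and hence have simultaneously (non)vanishing determinants. One should also confirm that the hypotheses of Corollary \ref{c47} are literally those required by Proposition \ref{p46} and Theorem \ref{Gormatrix} — namely $R$ Artinian Gorenstein containing a coefficient field $K$, $R\ne K$, $R\xi = (0):_R\fkm$, and $M = K^{\oplus s}$ — which they are, so the two results apply verbatim.
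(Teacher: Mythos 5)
Your proof is correct and follows exactly the route the paper intends: the sentence preceding the corollary says it is obtained ``by combining Theorem \ref{Gormatrix} and Proposition \ref{p46},'' and you do precisely that, identifying $A_C\cong R\times_{\varphi_C}M$ via Proposition \ref{p46} and then applying Theorem \ref{Gormatrix}. The one observation you add beyond the paper's implicit argument — that the matrix $C_{\varphi_C}$ in Theorem \ref{Gormatrix} differs from $C$ only by the unit scalar $\iota(\xi)$, so the determinants vanish simultaneously — is a genuine point of care (the paper's proof of Theorem \ref{Gormatrix} tacitly normalizes $\iota(\xi)=1$ by claiming $f\circ g=\mathrm{id}_S$), and it makes your write-up cleaner than the original.
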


\begin{ex}\label{aaa48}
Let $K$ be a field and $K[X, Y, Z]$ be the polynomial ring over $K$. For elements $a,b, c\in K$, set 
\[
A_{a,b,c}:=K[X, Y, Z]/(X^2, XY, XZ, Y^2-aX, YZ-bX, Z^2-cX).
\]
Then, $A_{a,b,c}$ is Gorenstein if and only if $ac\ne b^2$.
\end{ex}

\begin{proof}
We apply Corollary \ref{c47} with $A=K[X]/(X^2)$ and $s=2$.
\end{proof}


\subsection{The case of higher-dimensional rings}\label{subsection42}
In this subsection we explore the Gorenstein property of $A$ in arbitrary dimension. We maintain Setup \ref{setup2}.

Since an $A$-module $\Hom_R(M, \rmK_R)\oplus \rmK_R$ is the canonical module of $A$ (Proposition \ref{p48}), we can calculate the Cohen-Macaulay type of $A$. To state our assertion simply, set 
\[
\psi_{M, \rmK_R}=\{ \psi_{x, k}\in \Hom_R(M, \rmK_R) \mid x\in M, k\in \rmK_R\}.
\]
We also recall the notion of trace modules. 

\begin{defn} {\rm (\cite[Definition 2.1]{LP})}\label{d410}
For $R$-modules $M$ and $N$, 
\begin{align*} 
\mathrm{tr}_{M}(N):=\sum_{f\in \Hom_R(N, M)} \Im f \ \subseteq M
\end{align*}
is called the {\it trace module} of $N$ in $M$. 
\end{defn}

\begin{prop}\label{CMtype2}
Suppose that $(R, \fkm)$ is a Cohen-Macaulay local ring having the canonical module $\rmK_R$ and  $M$ is a maximal Cohen-Macaulay $R$-module. Then, 
\[
\rmr(A) = \ell_R(\Hom_R(M, \rmK_R)/[\fkm \Hom_R(M, \rmK_R) + \psi_{M, \rmK_R}]) + \ell_R(\rmK_R/[\fkm \rmK_R + \tr_{\rmK_R}(M)]).
\]
\end{prop}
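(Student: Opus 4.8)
The plan is to compute the Cohen-Macaulay type $\rmr(A) = \ell_R(\Hom_R(A/\fkn, \rmK_A))$ by using the explicit description of $\rmK_A = \Hom_R(M,\rmK_R) \oplus \rmK_R$ from Proposition \ref{p48} together with the $A$-action from Proposition \ref{dualaction}. Since $\rmr(A) = \ell_A(\rmK_A/\fkn \rmK_A) = \ell_R(\rmK_A/\fkn\rmK_A)$ (using $A/\fkn \cong R/\fkm$), the whole problem reduces to identifying the submodule $\fkn\rmK_A$ of $L = \Hom_R(M,\rmK_R) \oplus \rmK_R$ as a direct sum of $R$-submodules of the two components, and then reading off the colengths.

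First I would unwind the definition: $\fkn = \fkm \times M$, so $\fkn \rmK_A$ is generated by all products $(a,x)\cdot(f,k) = (af + \psi_{x,k},\ f(x) + ak)$ with $a \in \fkm$, $x \in M$, $f \in \Hom_R(M,\rmK_R)$, $k \in \rmK_R$. Taking $x = 0$ gives elements $(af, ak)$, so $\fkm\Hom_R(M,\rmK_R)$ sits in the first component and $\fkm\rmK_R$ in the second. Taking $a = 0$ gives $(\psi_{x,k},\ f(x))$; as $x,k$ range over $M, \rmK_R$ these $\psi_{x,k}$ generate exactly the submodule $\psi_{M,\rmK_R}$ (really the $R$-span of the $\psi_{x,k}$, but by $R$-bilinearity $\psi_{M,\rmK_R}$ already is an $R$-submodule), and as $f, x$ range the values $f(x)$ generate $\tr_{\rmK_R}(M)$ by definition of the trace module. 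The point I must check carefully is that there are no "cross terms" forcing the two components to be coupled: i.e. that $\fkn\rmK_A$ is actually the direct sum $[\fkm\Hom_R(M,\rmK_R) + \psi_{M,\rmK_R}] \oplus [\fkm\rmK_R + \tr_{\rmK_R}(M)]$ inside $L$. This follows because one can hit each generator type separately — $(af + \psi_{x,k},\ f(x)+ak)$ decomposes, after varying parameters independently, into a sum of $(af,ak)$-type, $(\psi_{x,k},0)$-type (choosing $f$ with $f(x)=0$ is not available in general, so instead one subtracts: $(\psi_{x,k},f(x)) - (0, f(x))$ requires $(0,f(x)) \in \fkn\rmK_A$, which holds since $f(x) \in \tr_{\rmK_R}(M) \subseteq \fkm\rmK_R$... ) — so I would instead argue directly that the two projections of $\fkn\rmK_A$ onto the components are the claimed submodules and that $\fkn\rmK_A$ contains each of $[\fkm\Hom + \psi_{M,\rmK_R}] \times 0$ and $0 \times [\fkm\rmK_R + \tr_{\rmK_R}(M)]$.

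For the second inclusion the key observation is $\tr_{\rmK_R}(M) \subseteq \fkm\rmK_R$: this holds because $M$ is maximal Cohen-Macaulay and if $\tr_{\rmK_R}(M)$ were not contained in $\fkm\rmK_R$, some $f \in \Hom_R(M,\rmK_R)$ would be surjective, forcing $\rmK_R$ (hence $M$) to have a free summand behavior that's incompatible — more cleanly, $f$ surjective splits and would make $\rmK_R$ a direct summand of $M$, but one does not need this: instead note $\tr_{\rmK_R}(M) + \fkm\rmK_R$ is what appears in the formula anyway, so the precise containment is immaterial. Granting that $0 \times (\fkm\rmK_R + \tr_{\rmK_R}(M)) \subseteq \fkn\rmK_A$ (take $(a,0)\cdot(0,k) = (0,ak)$ for the first piece and $(0,x)\cdot(f,0) = (\psi_{x,0}, f(x)) = (0, f(x))$ since $\psi_{x,0} = 0$, for the second), we get that $(\psi_{x,k}, f(x)) - (0, f(x)) = (\psi_{x,k}, 0) \in \fkn\rmK_A$, which gives $(\fkm\Hom + \psi_{M,\rmK_R}) \times 0 \subseteq \fkn\rmK_A$. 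Combined, $\fkn\rmK_A = [\fkm\Hom_R(M,\rmK_R) + \psi_{M,\rmK_R}] \oplus [\fkm\rmK_R + \tr_{\rmK_R}(M)]$.

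Finally, the length of $L/\fkn\rmK_A$ splits as the sum of lengths of the two quotients, giving exactly the stated formula. The main obstacle is the bookkeeping in the previous paragraph — verifying that the mixed generators $(\psi_{x,k}, f(x))$ do not enlarge the module beyond the direct sum, which hinges on the clean facts $\psi_{x,0} = 0$ (immediate from $\varphi(x,0) = 0$) and the resulting ability to isolate each component; once that is in hand everything else is formal.
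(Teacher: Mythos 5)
Your proof is correct and follows the same route the paper only sketches: compute $\rmr(A)=\mu_A(\rmK_A)=\ell_A(\rmK_A/\fkn\rmK_A)$ via the explicit $A$-action on $\rmK_A=\Hom_R(M,\rmK_R)\oplus\rmK_R$ from Propositions \ref{p48} and \ref{dualaction}, and identify $\fkn\rmK_A$ componentwise; your key decoupling step, using $\psi_{x,0}=0$ to obtain $(0,f(x))$ and then subtract it off, is exactly the detail the paper omits. (One minor quibble: $\psi_{M,\rmK_R}$ need not itself be closed under addition, so $\fkm\Hom_R(M,\rmK_R)+\psi_{M,\rmK_R}$ should be read as the submodule generated by these elements, as the paper implicitly does — but this does not affect the argument.)
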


\begin{proof}
Since $\rmr(A)=\mu_A(\rmK_A)$, the assertion can be checked by calculating 
\[
\ell_A([\Hom_R(M, \rmK_R)\oplus \rmK_R]/\fkn[\Hom_R(M, \rmK_R)\oplus \rmK_R])
\]
(recall the definition of the $A$-action of $\Hom_R(M, \rmK_R)\oplus \rmK_R$ given in Proposition \ref{dualaction}).
\end{proof}

We recall the notion of residually faithful modules, which is introduced by Brennan and Vasconcelos \cite{BV}. We use several fundamental facts on residually faithful modules to obtain a characterization of the Gorenstein property of $A$.

\begin{defn}{\rm (\cite[Definition 5.1]{BV})}
Let $N$ be a maximal Cohen-Macaulay $R$-module. We say that $N$ is a {\it residually faithful $R$-module} if $N/\fkq N$ is a faithful $R/\fkq$-module for some parameter ideal $\fkq$ of $R$.
\end{defn}

\begin{fact}{\rm (\cite[Proposition 3.2, Corollary 3.4, Proposition 3.6]{GKL})}\label{f311}
Suppose that $(R, \fkm)$ is a Cohen-Macaulay local ring having the canonical module $\rmK_R$. Let $N$ be a maximal Cohen-Macaulay $R$-module. The following are equivalent.
\begin{enumerate}[\rm(1)] 
\item $N$ is a residually faithful $R$-module.
\item $\tr_{\rmK_R}(N) = \rmK_R$.
\item $N/\fkq N$ is a faithful $R/\fkq$-module for all parameter ideals $\fkq$ of $R$.
\end{enumerate}
\end{fact}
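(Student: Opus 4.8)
The plan is to reduce everything to the Artinian case by cutting down modulo a system of parameters, and there to run a Matlis duality argument. The implication $(3)\Rightarrow(1)$ is trivial, so the real content is the equivalence of $(2)$ with faithfulness. First I would fix a parameter ideal $\fkq=(x_1,\dots,x_d)$ of $R$; since $R$ is Cohen--Macaulay and $N$ and $\rmK_R$ are maximal Cohen--Macaulay, $x_1,\dots,x_d$ is a regular sequence on each of $R$, $N$, and $\rmK_R$. Put $S=R/\fkq$, $\overline N=N/\fkq N$, and $\omega=\rmK_R/\fkq\rmK_R$; then $\omega$ is the canonical module of the Artinian local ring $S$. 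Because $N$ is maximal Cohen--Macaulay one has $\Ext^i_R(N,\rmK_R)=0$ for all $i>0$, so base change along the regular sequence yields a natural isomorphism $\Hom_R(N,\rmK_R)/\fkq\Hom_R(N,\rmK_R)\cong\Hom_S(\overline N,\omega)$. Hence every $S$-linear map $\overline N\to\omega$ is the reduction of an $R$-linear map $N\to\rmK_R$, and conversely; passing to images and summing gives
\[
\bigl(\tr_{\rmK_R}(N)+\fkq\rmK_R\bigr)/\fkq\rmK_R=\tr_{\omega}(\overline N).
\]
By Nakayama's lemma applied to the finitely generated module $\rmK_R/\tr_{\rmK_R}(N)$, this shows $\tr_{\rmK_R}(N)=\rmK_R$ holds over $R$ if and only if $\tr_{\omega}(\overline N)=\omega$ holds over $S$, for any chosen $\fkq$. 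So it suffices to prove, for an Artinian local ring $S$ with canonical module $\omega$: a finitely generated $S$-module $\overline N$ has $\tr_{\omega}(\overline N)=\omega$ exactly when $\overline N$ is faithful.

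For this Artinian statement I would invoke Matlis duality $(-)^{\vee}=\Hom_S(-,\omega)$, which is exact and involutive on finitely generated modules, satisfies $S^{\vee}=\omega$ and $\omega^{\vee}=S$, and preserves annihilators. Consider the evaluation map
\[
\mathrm{ev}\colon\ \overline N^{\vee}\otimes_S\overline N\longrightarrow\omega,\qquad f\otimes m\longmapsto f(m),
\]
whose image is precisely $\tr_{\omega}(\overline N)$. Then $\tr_{\omega}(\overline N)=\omega$ iff $\mathrm{ev}$ is surjective iff, after applying the exact functor $(-)^{\vee}$, the dual $\mathrm{ev}^{\vee}\colon\omega^{\vee}\to(\overline N^{\vee}\otimes_S\overline N)^{\vee}$ is injective. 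Now $\omega^{\vee}=S$, and Hom--tensor adjunction identifies $(\overline N^{\vee}\otimes_S\overline N)^{\vee}=\Hom_S(\overline N^{\vee}\otimes_S\overline N,\omega)$ with $\Hom_S(\overline N^{\vee},\Hom_S(\overline N,\omega))=\End_S(\overline N^{\vee})$, under which $\mathrm{ev}$ corresponds to $\mathrm{id}_{\overline N^{\vee}}$. Therefore $\mathrm{ev}^{\vee}$ is the structure homomorphism $S\to\End_S(\overline N^{\vee})$, $s\mapsto s\cdot\mathrm{id}$, whose kernel is $\Ann_S(\overline N^{\vee})=\Ann_S(\overline N)$. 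Thus $\mathrm{ev}^{\vee}$ is injective precisely when $\overline N$ is faithful, which settles the Artinian case; combined with the reduction above this gives $(1)\Leftrightarrow(2)\Leftrightarrow(3)$ in general.

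The step I expect to be the main obstacle is the bookkeeping in the reduction: one must verify carefully that $\rmK_R/\fkq\rmK_R$ is genuinely the canonical module of $R/\fkq$, that the base-change map $\Hom_R(N,\rmK_R)/\fkq\Hom_R(N,\rmK_R)\to\Hom_{R/\fkq}(N/\fkq N,\rmK_R/\fkq\rmK_R)$ really is an isomorphism — this is where the vanishing $\Ext^{>0}_R(N,\rmK_R)=0$ for maximal Cohen--Macaulay $N$ and the $\rmK_R$-regularity of $x_1,\dots,x_d$ enter — and that forming the trace module commutes with killing $\fkq$. By comparison the Artinian core is a soft diagram chase, the only delicate point being the identification of $\mathrm{ev}$ with the identity endomorphism under the adjunction.
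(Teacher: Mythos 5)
The paper cites this equivalence from \cite[Propositions 3.2, 3.4, 3.6]{GKL} without reproducing a proof, so there is no in-paper argument to compare against; I can only assess your proposal on its own terms, and it is correct. The reduction modulo a parameter ideal $\fkq$ rests on three standard facts, all invoked appropriately: $\Ext^{i}_R(N,\rmK_R)=0$ for $i>0$ when $N$ is maximal Cohen--Macaulay, $\rmK_R/\fkq\rmK_R\cong\rmK_{R/\fkq}$, and the resulting base-change isomorphism $\Hom_R(N,\rmK_R)/\fkq\Hom_R(N,\rmK_R)\cong\Hom_{R/\fkq}(N/\fkq N,\rmK_{R/\fkq})$. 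Your identity $(\tr_{\rmK_R}(N)+\fkq\rmK_R)/\fkq\rmK_R=\tr_{\omega}(\overline N)$ follows, and Nakayama then makes $\tr_{\rmK_R}(N)=\rmK_R$ equivalent to the Artinian condition $\tr_{\omega}(\overline N)=\omega$ for every (equivalently, some) parameter ideal $\fkq$; this is exactly what lets (1), (2), (3) fall out simultaneously. The Matlis-duality step is also sound: exactness of $(-)^{\vee}$ turns surjectivity of $\mathrm{ev}$ into injectivity of $\mathrm{ev}^{\vee}$ and faithfulness gives the converse; Hom--tensor adjunction correctly identifies $\mathrm{ev}^{\vee}$ with the structure map $S\to\End_S(\overline N^{\vee})$; and its kernel is $\Ann_S(\overline N^{\vee})=\Ann_S(\overline N)$. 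The only place I would add a word is that last equality of annihilators, which you use but do not justify; it is immediate from faithfulness of Matlis duality (one has $s\overline N=0$ iff $(s\overline N)^{\vee}=0$ iff $s\overline N^{\vee}=0$), but it is worth saying, since the whole Artinian argument turns on it.
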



\begin{thm}\label{higherGor}
Suppose that $R$ is a homomorphic image of a Gorenstein local ring. The following are equivalent.
\begin{enumerate}[\rm(1)] 
\item $A$ is Gorenstein.
\item One of the following hold. 
\begin{enumerate}[\rm(i)] 
\item $R$ is a Cohen-Macaulay ring having the canonical module $\rmK_R$ and $M\cong \rmK_R$.
\item $R$ is a Gorenstein ring, $M$ is a maximal Cohen-Macaulay $R$-module, and $\Hom_R(M, R) = \psi_{M, R}$.
\end{enumerate}
\end{enumerate}
\end{thm}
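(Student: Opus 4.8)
The plan is to reduce the Gorenstein property of $A$ to a statement about its Cohen--Macaulay type, exactly as was done in the Artinian case (Theorem \ref{Gor}) but now passing through the canonical module of $A$ computed in Proposition \ref{p48} and the type formula in Proposition \ref{CMtype2}. Since $R$ is a homomorphic image of a Gorenstein local ring, $A$ is also such an image (as $A$ is module-finite over $R$), so $A$ has a canonical module and ``$A$ Gorenstein'' is equivalent to ``$A$ Cohen--Macaulay and $\rmr(A)=1$''. By Corollary \ref{CM}, the Cohen--Macaulay condition on $A$ is equivalent to $R$ being Cohen--Macaulay and $M$ being a maximal Cohen--Macaulay $R$-module, which is part of both alternatives in (2); so throughout we may and do assume this holds, and then $R$ has a canonical module $\rmK_R$ by the homomorphic-image hypothesis.

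\textbf{The type computation.} Under these standing assumptions, Proposition \ref{CMtype2} gives
\[
\rmr(A) = \ell_R\!\big(\Hom_R(M,\rmK_R)/[\fkm\Hom_R(M,\rmK_R)+\psi_{M,\rmK_R}]\big) + \ell_R\!\big(\rmK_R/[\fkm\rmK_R+\tr_{\rmK_R}(M)]\big).
\]
Both summands are nonnegative, and $A$ is Gorenstein iff their sum is $1$, i.e.\ iff exactly one of them is $1$ and the other is $0$. I would analyze the two cases.

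\textit{Case A: the second summand is $0$.} This says $\tr_{\rmK_R}(M)+\fkm\rmK_R=\rmK_R$, hence by Nakayama $\tr_{\rmK_R}(M)=\rmK_R$, i.e.\ $M$ is residually faithful by Fact \ref{f311}. I then need the first summand to equal $1$. Here I would use that $M$ residually faithful forces a rigid structure: applying $\Hom_R(-,\rmK_R)$ and the fact that $\tr_{\rmK_R}(M)=\rmK_R$ means the evaluation/trace map is surjective, one shows (cf.\ the Brennan--Vasconcelos theory, \cite{BV}, \cite{GKL}) that $M\cong\rmK_R$ is forced once the remaining length condition pins the type down to $1$; conversely if $M\cong\rmK_R$ then $\Hom_R(M,\rmK_R)\cong\Hom_R(\rmK_R,\rmK_R)=R$ (as $R$ is CM with canonical module), the first summand becomes $\ell_R(R/[\fkm+\psi_{M,\rmK_R}])$, and one checks $\psi_{M,\rmK_R}\subseteq\fkm R$ under the identification (it lands in $\fkm$ because $\varphi(M\times M)\subseteq\fkm$), so the summand is $1$. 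This yields alternative (i). The delicate point is proving the forward direction: that $\tr_{\rmK_R}(M)=\rmK_R$ together with $\rmr(A)=1$ implies $M\cong\rmK_R$ rather than merely $M$ residually faithful; I expect to invoke that a residually faithful $M$ with $\Hom_R(M,\rmK_R)=\psi_{M,\rmK_R}+\fkm\Hom_R(M,\rmK_R)$ of colength $1$ admits a surjection onto $\rmK_R$ that must be an isomorphism by counting, using that $M$ and $\rmK_R$ are maximal Cohen--Macaulay of the same rank.

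\textit{Case B: the first summand is $0$ and the second is $1$.} The vanishing says $\Hom_R(M,\rmK_R)=\psi_{M,\rmK_R}+\fkm\Hom_R(M,\rmK_R)$, hence $\Hom_R(M,\rmK_R)=\psi_{M,\rmK_R}$ by Nakayama (note $\psi_{M,\rmK_R}$ is an $R$-submodule since $\psi_{ax,k}=\psi_{x,ak}=a\psi_{x,k}$ and it is closed under addition by Proposition \ref{dualaction}'s computations). The second summand being $1$ says $\ell_R(\rmK_R/[\fkm\rmK_R+\tr_{\rmK_R}(M)])=1$; I must show this combination forces $R$ Gorenstein with $\Hom_R(M,R)=\psi_{M,R}$. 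The idea mirrors the Artinian argument: the identity $\varphi(x,y)z=\varphi(y,z)x$ says that for each fixed $x$, the map $\psi_{x,k}$ has image inside $(\varphi(x,M)):_{\rmK_R}\!M$-type submodules; pushing on this, $\Hom_R(M,\rmK_R)=\psi_{M,\rmK_R}$ should force $\tr_{\rmK_R}(M)\subseteq\Ann$-type conditions that collapse unless $\rmK_R\cong R$, i.e.\ $R$ is Gorenstein, after which $\psi_{M,\rmK_R}=\psi_{M,R}$ and alternative (ii) falls out. Conversely, if $R$ is Gorenstein and $\Hom_R(M,R)=\psi_{M,R}$, take $\rmK_R=R$: the first summand is $0$ by hypothesis, and the second is $\ell_R(R/[\fkm+\tr_R(M)])$; one shows $\tr_R(M)\subseteq\fkm$ (else $M$ would have a free summand, and then $\Hom_R(M,R)\ne\psi_{M,R}$ since a split surjection $M\to R$ is not of the form $\psi_{x,k}$, as those annihilate... — this needs $\psi_{x,k}(M)=\varphi(x,M)k\subseteq\fkm\cdot k$), giving the second summand $=1$ and $\rmr(A)=1$.

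\textbf{Main obstacle.} The routine half — that each alternative in (2) implies $A$ Gorenstein — is a direct length computation from Proposition \ref{CMtype2}. The real work is the converse: disentangling which of the two summands vanishes and showing the non-vanishing one equals exactly $1$ forces the precise structural conclusion. The hardest single step is Case B, showing $\Hom_R(M,\rmK_R)=\psi_{M,\rmK_R}$ (plus the colength-one condition on the other term) actually implies $R$ is \emph{Gorenstein}; this is the higher-dimensional analogue of the ``$(0):_R\fkm\subseteq\Ann_R M$'' argument in Theorem \ref{Gor}, and translating it requires care with the module $\psi_{M,\rmK_R}$ and the bilinear identity $\varphi(x,y)z=\varphi(y,z)x$. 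I would handle it by reducing modulo a maximal $A$-regular (equivalently $R$-regular, by Fact \ref{f310}) sequence contained in $\fkm\times 0$, using Proposition \ref{graded}(4) to identify the quotient as an idealization-with-$\bar\varphi$ over the Artinian ring $R/\fkq$, and then quoting Theorem \ref{Gor}; the only thing to verify is that $M_\varphi$, the canonical-module condition, and the Gorenstein condition all behave well under this reduction, which they do because canonical modules and faithfulness are preserved modulo a regular sequence.
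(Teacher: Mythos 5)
Your skeleton is the same as the paper's: reduce to the Cohen--Macaulay case via Corollary~\ref{CM}, use the canonical module from Proposition~\ref{p48} and the type formula from Proposition~\ref{CMtype2}, and then split on which of the two nonnegative summands in $\rmr(A)$ equals $1$. The ``if'' halves of both alternatives are carried out essentially correctly. The gaps are in the ``only if'' halves, and you have mislocated where the heavy machinery is actually needed.

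For Case~A (alternative~(i)), the forward direction is where the paper reduces modulo a system of parameters $\fkq$, identifies $A/\fkq A\cong R/\fkq\times_{\bar\varphi}M/\fkq M$ via Proposition~\ref{graded}(4), quotes the Artinian Theorem~\ref{Gor} to conclude $M/\fkq M\cong\rmK_{R/\fkq}$ (using that $M/\fkq M$ is faithful, by residual faithfulness and Fact~\ref{f311}), and then invokes that a faithful maximal Cohen--Macaulay module of type $1$ is the canonical module. Your proposed alternative --- ``a surjection onto $\rmK_R$ that must be an isomorphism by counting, using that $M$ and $\rmK_R$ are maximal Cohen--Macaulay of the same rank'' --- does not work as stated: you do not know in advance that $M$ has the same rank as $\rmK_R$ (indeed rank may not even be defined if $R$ is not a domain), and residual faithfulness alone does not give you a surjection $M\twoheadrightarrow\rmK_R$ without further input. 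The reduction-to-Artinian argument that you defer to the end of your writeup (and attribute to Case~B) is precisely what is needed here.

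For Case~B (alternative~(ii)), the forward direction is actually the easy one, and you overlook the short argument. Once Nakayama gives $\psi_{M,\rmK_R}=\Hom_R(M,\rmK_R)$, note that \emph{every} $\psi_{x,k}$ has $\Im\psi_{x,k}=\varphi(x,M)k\subseteq\fkm\rmK_R$ because $\varphi(M\times M)\subseteq\fkm$; hence $\tr_{\rmK_R}(M)=\sum_{f\in\Hom_R(M,\rmK_R)}\Im f\subseteq\fkm\rmK_R$, and the second summand collapses to $\ell_R(\rmK_R/\fkm\rmK_R)=\mu_R(\rmK_R)=\rmr(R)$. So the second summand equals $1$ if and only if $R$ is Gorenstein, after which $\rmK_R\cong R$ and $\psi_{M,\rmK_R}$ may be replaced by $\psi_{M,R}$. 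No structural lemma about $\varphi(x,y)z=\varphi(y,z)x$ and no reduction modulo a regular sequence is needed here. In short: move your end-of-proposal reduction-mod-$\fkq$ strategy to Case~A where it is genuinely required, and replace your Case~B speculation with the one-line observation above.
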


\begin{proof}
In the proof of each implication, we may assume that $R$ is a Cohen-Macaulay ring and that $M$ is a maximal Cohen-Macaulay $R$-module by Theorem \ref{CM}. Since $R$ is a homomorphic image of a Gorenstein local ring, $R$ has the canonical module $\rmK_R$ (\cite[Theorem 3.3.6]{BH}). 
By Proposition \ref{CMtype2}, $A$ is Gorenstein if and only if either 
\begin{enumerate}[\rm(a)] 
\item $\ell_R(\Hom_R(M, \rmK_R)/[\fkm \Hom_R(M, \rmK_R)+\psi_{M, \rmK_R}])=1$ and $\ell_R(\rmK_R/[\fkm \rmK_R + \tr_{\rmK_R}(M)])=0$ or
\item $\ell_R(\Hom_R(M, \rmK_R)/[\fkm \Hom_R(M, \rmK_R)+\psi_{M, \rmK_R}])=0$ and $\ell_R(\rmK_R/[\fkm \rmK_R + \tr_{\rmK_R}(M)])=1$.
\end{enumerate}
Hence, we only need to prove the following claim.
\end{proof}

\begin{claim}\label{claim1}
\begin{enumerate}[\rm(1)] 
\item The condition {\rm (a)} holds if and only if $M\cong \rmK_R$.
\item The condition {\rm (b)} holds if and only if $R$ is a Gorenstein ring and $\Hom_R(M, R) = \psi_{M, R}$.
\end{enumerate}
\end{claim}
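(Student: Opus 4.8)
The plan is to analyze the two length conditions separately, in each case exploiting the fact that for a maximal Cohen-Macaulay $R$-module $N$ over a Cohen-Macaulay local ring with canonical module, $\rmr_R(N) = \mu_R(\Hom_R(N, \rmK_R))$ and $N \cong \rmK_R$ precisely when $\rmr_R(N) = 1$ and $N$ is residually faithful (equivalently, by Fact \ref{f311}, $\tr_{\rmK_R}(N) = \rmK_R$). For part (1), I would argue as follows. The condition $\ell_R(\rmK_R/[\fkm \rmK_R + \tr_{\rmK_R}(M)]) = 0$ says $\rmK_R = \fkm \rmK_R + \tr_{\rmK_R}(M)$, so by Nakayama's lemma $\tr_{\rmK_R}(M) = \rmK_R$; thus $M$ is residually faithful by Fact \ref{f311}. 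The condition $\ell_R(\Hom_R(M, \rmK_R)/[\fkm \Hom_R(M, \rmK_R) + \psi_{M, \rmK_R}]) = 1$ forces $\mu_R(\Hom_R(M, \rmK_R)) \le \ell_R(\Hom_R(M, \rmK_R)/\fkm \Hom_R(M, \rmK_R)) = 1$ (using $\psi_{M, \rmK_R} \subseteq \Hom_R(M, \rmK_R)$ and that the quotient by $\fkm \Hom_R(M,\rmK_R)$ already has length equal to $\mu_R$), and it is nonzero so $\mu_R(\Hom_R(M, \rmK_R)) = \rmr_R(M) = 1$. A maximal Cohen-Macaulay module of type one that is residually faithful is isomorphic to $\rmK_R$ (this is essentially \cite[Proposition 3.3.13]{BH} combined with Fact \ref{f311}, or one can use that $\Hom_R(M, \rmK_R)$ is then cyclic MCM hence free of rank one, so $M \cong \Hom_R(R, \rmK_R) = \rmK_R$ after dualizing, using residual faithfulness to identify the rank). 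Conversely, if $M \cong \rmK_R$ then $\Hom_R(M, \rmK_R) \cong R$, $\tr_{\rmK_R}(M) = \rmK_R$, and one checks $\psi_{M, \rmK_R} \subseteq \fkm \Hom_R(M, \rmK_R) \cong \fkm$ because $\varphi(M \times M) \subseteq \fkm$ forces each $\psi_{x,k}$ to land in $\fkm \rmK_R$; hence the first length is $\ell_R(R/\fkm) = 1$ and the second is $0$, giving (a).

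For part (2): if (b) holds, then $\ell_R(\Hom_R(M, \rmK_R)/[\fkm \Hom_R(M, \rmK_R) + \psi_{M, \rmK_R}]) = 0$ gives, by Nakayama applied to the submodule $\psi_{M, \rmK_R}$ together with $\fkm$, that $\Hom_R(M, \rmK_R) = \psi_{M, \rmK_R}$ is generated by the $\psi_{x,k}$. The second condition $\ell_R(\rmK_R/[\fkm \rmK_R + \tr_{\rmK_R}(M)]) = 1$ is nonzero, so $\tr_{\rmK_R}(M) \subseteq \fkm \rmK_R$ (otherwise $\tr_{\rmK_R}(M) + \fkm\rmK_R = \rmK_R$ and the length would be $0$), and thus $\rmK_R/[\fkm\rmK_R + \tr_{\rmK_R}(M)] = \rmK_R/\fkm\rmK_R$ has length $1$, i.e. $\rmr_R(R) = \mu_R(\rmK_R) = 1$, so $R$ is Gorenstein. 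With $R$ Gorenstein we have $\rmK_R \cong R$, and under this identification $\psi_{M, \rmK_R}$ becomes $\psi_{M, R}$ and $\Hom_R(M,\rmK_R)$ becomes $\Hom_R(M,R)$, so $\Hom_R(M, R) = \psi_{M, R}$. Conversely, if $R$ is Gorenstein with $\rmK_R \cong R$ and $\Hom_R(M, R) = \psi_{M, R}$, then (transporting along $\rmK_R \cong R$) the first length is $0$; moreover $\tr_{\rmK_R}(M) = \tr_R(M)$ corresponds under the same identification, and since every homomorphism $M \to R$ lies in $\psi_{M,R}$, its image is contained in $\varphi(M \times M) \cdot \rmK_R \subseteq \fkm \rmK_R$ (as $\psi_{x,k}(y) = \varphi(x,y)k$ with $\varphi(x,y)\in\fkm$), so $\tr_{\rmK_R}(M) \subseteq \fkm \rmK_R$ and the second length equals $\ell_R(\rmK_R/\fkm\rmK_R) = 1$; hence (b).

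The main obstacle I anticipate is the careful handling of the identification $\Hom_R(M, \rmK_R) = \psi_{M, \rmK_R}$ versus mere containment, and correctly controlling the submodule generated by $\psi_{M,\rmK_R}$ when applying Nakayama: one must confirm that $\psi_{M, \rmK_R}$, though defined as a \emph{set}, generates an $R$-submodule of $\Hom_R(M,\rmK_R)$ (it does, by $R$-bilinearity of $\varphi$ and the formula $a\psi_{x,k} = \psi_{x, ak} = \psi_{ax, k}$), so that the displayed quotients make sense and Nakayama applies. The step showing that a residually faithful MCM module of type one is isomorphic to $\rmK_R$ also needs a clean citation or a short argument via double duality $M \cong \Hom_R(\Hom_R(M,\rmK_R), \rmK_R) \cong \Hom_R(R,\rmK_R) = \rmK_R$, where residual faithfulness is what guarantees $\Hom_R(M,\rmK_R)$ is \emph{faithful} cyclic MCM hence $\cong R$ rather than a proper cyclic quotient. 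Everything else is a routine bookkeeping of lengths.
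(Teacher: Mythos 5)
Your argument for part (2) is essentially sound (with one small logical slip noted below), but there is a genuine gap in the ``only if'' direction of part (1).

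In the only-if direction you claim that the condition
\[
\ell_R\bigl(\Hom_R(M, \rmK_R)/[\fkm \Hom_R(M, \rmK_R) + \psi_{M, \rmK_R}]\bigr)=1
\]
forces $\mu_R(\Hom_R(M,\rmK_R))=\rmr_R(M)\le 1$. This has the inequality running in the wrong direction: since $\fkm\Hom_R(M,\rmK_R)\subseteq \fkm\Hom_R(M,\rmK_R)+\psi_{M,\rmK_R}$, the displayed quantity is a \emph{lower} bound for $\rmr_R(M)=\ell_R(\Hom_R(M,\rmK_R)/\fkm\Hom_R(M,\rmK_R))$, not an upper bound. Equality holds precisely when $\psi_{M,\rmK_R}\subseteq\fkm\Hom_R(M,\rmK_R)$; you prove this containment in the \emph{reverse} direction, but there it is an easy consequence of $\Hom_R(\rmK_R,\rmK_R)\cong R$, which is only available once one already knows $M\cong\rmK_R$. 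In the only-if direction you do not know this, and you give no independent argument for the containment (it is far from automatic for a general MCM $M$). So the crucial step $\rmr_R(M)=1$ is unjustified. The paper sidesteps the issue entirely: condition (a) gives $\rmr(A)=1$ and (via Nakayama and Fact~\ref{f311}) residual faithfulness of $M$; one then passes to $A/\fkq A\cong (R/\fkq)\times_{\bar\varphi}(M/\fkq M)$ for a parameter ideal $\fkq$, applies the Artinian analysis (Proposition~\ref{CMtype} and the proof of Theorem~\ref{Gor}(1)$\Rightarrow$(2), which uses the socle of $A/\fkq A$, not the two length pieces separately) to obtain $M/\fkq M\cong\rmK_{R/\fkq}$, and finally lifts to $M\cong\rmK_R$ via \cite[Proposition 3.3.13]{BH}. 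To repair your proof you would either have to reproduce this reduction to dimension zero, or else prove directly that $\psi_{M,\rmK_R}\subseteq\fkm\Hom_R(M,\rmK_R)$ whenever $M$ is residually faithful (which is not obvious and is not what the paper does).

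A smaller point in part (2): the justification ``the length is nonzero, so $\tr_{\rmK_R}(M)\subseteq\fkm\rmK_R$ (otherwise $\tr_{\rmK_R}(M)+\fkm\rmK_R=\rmK_R$)'' is not a valid contrapositive — if $\tr_{\rmK_R}(M)\not\subseteq\fkm\rmK_R$ the sum could still be a proper submodule of $\rmK_R$. The correct (and the paper's) argument is that once $\Hom_R(M,\rmK_R)=\psi_{M,\rmK_R}$, every $f\in\Hom_R(M,\rmK_R)$ has image inside $\fkm\rmK_R$, hence $\tr_{\rmK_R}(M)\subseteq\fkm\rmK_R$ directly; the second length then equals $\ell_R(\rmK_R/\fkm\rmK_R)=\rmr(R)$, giving the Gorenstein condition. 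You do have all the needed ingredients for this in your writeup, but the stated deduction is incorrect as phrased.
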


\begin{proof}[Proof of Claim \ref{claim1}]
(1) (if part): If $M\cong \rmK_R$, then $\tr_{\rmK_R}(M)=\rmK_R$. Hence, $\ell_R(\rmK_R/[\fkm \rmK_R + \tr_{\rmK_R}(M)])=0$. Let $x\in M$ and $k\in \rmK_R$. Then, $\Im \psi_{x, k} = \{\varphi(x, y)k \mid y\in M\} \subseteq \fkm \rmK_R$ since $\varphi(M\times M) \subseteq \fkm$. By noting that $\Hom_R(\rmK_R, \rmK_R)\cong R$, it follows that $\psi_{x, k}$ is a homomorphism obtained by a multiplication of some $a\in \fkm$. Hence, $\psi_{M, \rmK_R}\subseteq \fkm \Hom_R(M, \rmK_R)$. Therefore, since $\Hom_R(M, \rmK_R) \cong R$, we obtain that
\[
\ell_R(\Hom_R(M, \rmK_R)/[\fkm \Hom_R(M, \rmK_R)+\psi_{M, \rmK_R}])=\ell_R(R/\fkm) = 1.
\]

 (only if part): Suppose that the condition {\rm (a)} holds. Then, $A$ is Gorenstein. Since $\fkm \rmK_R + \tr_{\rmK_R}(M) = \rmK_R$, $M$ is a residually faithful $R$-module by Fact \ref{f311} and Nakayama's lemma. Let $a_1, a_2, \dots, a_d\in \fkm$ be a system of parameter of $R$, and set $\fkq=(a_1, a_2, \dots, a_d)$.  Then $M/\fkq M$ is a faithful $R/\fkq$-module by Fact \ref{f311}. Furthermore, since $(a_1, 0), (a_2, 0), \dots, (a_d, 0)$ is a system of parameter of $A$, $A/\fkq A\cong R/\fkq \times_{\ol{\varphi}} M/\fkq M$ is a Gorenstein ring where $\ol{\varphi}$ is the natural map induced from $\varphi$ (Proposition \ref{graded}(4) and Theorem \ref{CM}). Since $M/\fkq M$ is faithful, it follows that $M/\fkq M \cong \rmK_{R/\fkq}$ by the proof of Theorem \ref{Gor}(1)$\Rightarrow$(2). Thus, $M$ is a faithful maximal Cohen-Macaulay $R$-module of Cohen-Macaulay type $1$, that is, $M\cong \rmK_R$ by \cite[Proposition 3.3.13]{BH}.

(2): By Nakayama's lemma, $\ell_R(\Hom_R(M, \rmK_R)/[\fkm \Hom_R(M, \rmK_R)+\psi_{M, \rmK_R}])=0$ if and only if $\psi_{M, \rmK_R} = \Hom_R(M, \rmK_R)$. On the other hand, for all $x\in M$ and $k\in \rmK_R$, we have 
\[
\Im \psi_{x, k} = \{\varphi(x, y)k \mid y\in M\} \subseteq \fkm \rmK_R
\] 
since $\varphi(M\times M) \subseteq \fkm$. It follows that $\tr_{\rmK_R}(M)\subseteq \fkm \rmK_R$ when $\psi_{M, \rmK_R} = \Hom_R(M, \rmK_R)$. Therefore, the condition {\rm (b)} holds if and only if
\[
\psi_{M, \rmK_R} = \Hom_R(M, \rmK_R)\quad \text{ and } \quad \ell_R(\rmK_R/\fkm \rmK_R)=1.
\]
The latter condition is equivalent to saying that $R$ is Gorenstein. Hence, we can also replace $\rmK_R$ by $R$ in the former equation of the above conditions.
\end{proof}



\begin{rem}\label{rem4.15}
\begin{enumerate}[$(1)$]
\item
Let $a\in R$, $x\in M$ and consider the map $\psi_{x, a}\in \psi_{M, R}$. Since $\psi_{x, a}(y)=\varphi(x, y)a=\varphi(ax, y)$ for each $y\in M$, we have 
$$\psi_{M, R}=\{ \varphi(ax,-) \mid x\in M, \ a\in R \}=\{ \varphi(x, -) \mid x\in M \},$$ 
where $\varphi(x, -): M\to R$ ; $y\mapsto \varphi(x, y)$. Therefore, the equation $\Hom_R(M, R)=\psi_{M, R}$ means that every homomorphism in $\Hom_R(M, R)$ is in the form of $\varphi(x, -)$ for some $x\in M$.

\item The conditions of Theorems \ref{Gor}(2)(ii) and \ref{higherGor}(2)(ii) are equivalent in dimension zero. Suppose that $R$ is a Gorenstein ring and $M$ is a maximal Cohen-Macaulay $R$-module. Then, the equation $\Hom_R(M, R)=\psi_{M, R}$ implies that $M_{\varphi}=0$, and the converse also holds if $\dim R=0$.

Indeed, let $(-)^*$ denote the $R$-dual $\Hom_R(-, R)$.  
We consider the inclusion map $i: \psi_{M, R} \to M^*$ and the induced map $i^*: M^{**}\to (\psi_{M, R})^*$.
We also consider the canonical map $h: M\to M^{**}$, where $[h(x)](f)=f(x)$ for $x\in M$ and $f\in M^*$ (\cite[Theorem 3.3.10]{BH}).
Then, we obtain that 
$$\Ker \ i^*=\{ h(x) \mid x\in M_{\varphi} \}\cong M_{\varphi}.$$
It follows that $M_{\varphi}=0$ if and only if the map $i^*$ is injective. Therefore, if the map $i$ is bijective, then $M_{\varphi}=0$. 
The converse holds true if $\dim R=0$ since $\psi_{M, R}$ is also a maximal Cohen-Macaulay $R$-module.
\end{enumerate}

\end{rem}

As explained in Subsection \ref{subsection4.1}, our interest is in  the condition of \ref{higherGor}(2)(ii). 
Let $R$ be a Gorenstein ring of dimension $d>0$ and $M$ be a maximal Cohen-Macaulay $R$-module of rank $\ell>0$. If $\ell \ge 2$, by Theorem \ref{rank}(2), $\varphi=0$, that is, $A$ is the idealization in the sense of \cite{AW}. In this case, $A$ is Gorenstein if and only if $M\cong R$ by Theorem \ref{higherGor}. Thus, we assume that $\ell=1$. Then, $M= I$ for some ideal $I$ of $R$ since $M$ is a torsionfree $R$-module of rank one. Then $\varphi$ has the following form: $\varphi: I\times I \to R$ ; $(x, y)\mapsto \alpha xy$ for some $\alpha \in \rmQ(R)$  by Theorem \ref{rank}(1). 

With these assumptions, we characterize the Gorenstein property of $A=R\times_{\varphi} I$ as follows.

\begin{cor}\label{cc418}
Suppose that $R$ is a Gorenstein ring of dimension $d>0$. Let $I$ be an ideal of $R$ containing a non-zerodivisor of $R$ such that $I$ is a maximal Cohen-Macaulay $R$-module but $I\ncong R$. Set $\varphi: I\times I \to R; (x, y)\mapsto \alpha xy$ for some $\alpha \in \rmQ(R)$. Then the following conditions are equivalent. 
\begin{enumerate}[$(1)$]
\item $A=R\times_{\varphi} I$ is Gorenstein. 
\item $\alpha I=R:I$.
\item $\alpha \in \rmQ(R)^\times$, $\alpha^{-1} \in I$, and $I=\left(\alpha^{-1}\right):_R I$.
\end{enumerate}

\end{cor}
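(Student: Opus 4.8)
The plan is to deduce everything from Theorem \ref{higherGor}, using that $R$ is Gorenstein and $I$ is a faithful maximal Cohen-Macaulay module of rank one, so by Remark \ref{rem4.15}(1) the condition (2)(ii) becomes: $\Hom_R(I,R)=\{\varphi(x,-)\mid x\in I\}$. First I would identify $\Hom_R(I,R)$ with the fractional ideal $R:I$ via the standard correspondence $\alpha\mapsto(\,\cdot\,\alpha\colon I\to R)$ recalled in the Setup, and observe that for $x\in I$ the homomorphism $\varphi(x,-)\colon I\to R$, $y\mapsto\alpha xy$, corresponds under this identification to the element $\alpha x\in R:I$. Hence $\psi_{I,R}$, as a fractional subideal of $R:I$, is exactly $\alpha I$. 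So (1) holds iff $\alpha I=R:I$, which is (2); this is the core of the argument. I should note in passing that $\alpha I\subseteq R:I$ automatically, since $\varphi$ maps into $R$, so really the content of (2) is the reverse inclusion $R:I\subseteq\alpha I$.

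Next I would prove (2)$\Rightarrow$(3). From $\alpha I=R:I$ and $I$ containing a non-zerodivisor, $\alpha I$ contains a non-zerodivisor of $R$; since $R$ is Gorenstein of positive dimension, $R:I$ is a faithful fractional ideal, so $\alpha$ is a non-zerodivisor of $Q(R)$, i.e.\ $\alpha\in Q(R)^\times$. Multiplying $\alpha I=R:I$ by $\alpha^{-1}$ gives $I=\alpha^{-1}(R:I)=\alpha^{-1}R:I=(\alpha^{-1}):_R I$ after checking the last set lies in $R$ (it does, because $\alpha^{-1}(R:I)=I\subseteq R$). Also $1\in R=I(R:I)=I\cdot\alpha I$, so $\alpha^{-1}\in I\cdot I\subseteq$... more carefully: from $I=\alpha^{-1}(R:I)$ and $1\in I\cdot(R:I)$ one gets $\alpha^{-1}\in\alpha^{-1}(R:I)\cdot(R:I)\subseteq\alpha^{-1}(R:I)=I$ — wait, better: $1\in I(R:I)$ means $1=\sum a_ib_i$ with $a_i\in I$, $b_i\in R:I$; then $\alpha^{-1}=\sum(\alpha^{-1}a_i)b_i$ and $\alpha^{-1}a_i\in\alpha^{-1}I\subseteq\alpha^{-1}(R:I)=I$? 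That needs $I\subseteq R:I$, which may fail. Instead use $\alpha^{-1}b_i\in\alpha^{-1}(R:I)=I$, so $\alpha^{-1}=\sum a_i(\alpha^{-1}b_i)\in I\cdot R=I$. That gives $\alpha^{-1}\in I$, completing (3).

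Finally (3)$\Rightarrow$(2): given $\alpha\in Q(R)^\times$ and $I=(\alpha^{-1}):_R I=\alpha^{-1}R:I$, multiply by $\alpha$ to get $\alpha I=R:I$ directly. The main obstacle I anticipate is purely bookkeeping about fractional ideals: making sure the equalities $\alpha^{-1}(R:I)=\alpha^{-1}R:I$, $(\alpha^{-1}):_R I=\alpha^{-1}R\cap\,(R:I)\cdot(\text{stuff})$ are handled with the right ambient ring, and that each colon really lands inside $R$ where the statement wants a genuine ideal of $R$ — here the hypothesis $\alpha^{-1}\in I$ (equivalently $1\in\alpha I$) is exactly what forces $\alpha^{-1}R\subseteq R:I$ and hence $(\alpha^{-1}):_R I=\alpha^{-1}R:I$. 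Everything else is a formal translation through Theorem \ref{higherGor} and Remark \ref{rem4.15}.
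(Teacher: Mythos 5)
Your reduction of (1)$\Leftrightarrow$(2) via Theorem~\ref{higherGor} and Remark~\ref{rem4.15} is exactly the paper's argument: identify $\Hom_R(I,R)$ with $R:I$ and $\psi_{I,R}$ with $\alpha I$. Your (3)$\Rightarrow$(2) is also fine. The problem is in (2)$\Rightarrow$(3), and it is not mere bookkeeping.

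To get $\alpha^{-1}\in I$ you repeatedly invoke $1\in I\cdot(R:I)$, writing $1=\sum a_ib_i$ with $a_i\in I$, $b_i\in R:I$. But $I\cdot(R:I)=\tr_R(I)$, and for a rank-one maximal Cohen--Macaulay ideal over a local ring this equals $R$ precisely when $I$ has a free summand, i.e.\ when $I\cong R$ --- which the corollary explicitly excludes. So $1\notin I(R:I)$, and the step you ``fix'' by moving $\alpha^{-1}$ onto the $b_i$ never gets off the ground. Your faithfulness argument for $\alpha\in Q(R)^\times$ is also more roundabout than it needs to be (and its final step, from $\alpha y$ a non-zerodivisor to $\alpha$ a unit, needs an extra word).

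The paper's observation is much cheaper and sidesteps the trace ideal entirely: since $I\subseteq R$ one has $1\cdot I=I\subseteq R$, so $1\in R:I=\alpha I$. Thus there is $x\in I$ with $\alpha x=1$, which simultaneously gives $\alpha\in Q(R)^\times$ and $\alpha^{-1}=x\in I$. The identity $I=(\alpha^{-1}):_R I$ then follows by multiplying $\alpha I=R:I$ by $\alpha^{-1}$ and using $\alpha^{-1}(R:I)=(\alpha^{-1}):_R I$, exactly as you noted for (3)$\Rightarrow$(2). Replace the $1\in I(R:I)$ step by the observation $1\in R:I$ and the proof closes.
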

\begin{proof}
(1) $\Leftrightarrow$ (2): Since there exists the following commutative diagram
$$
\xymatrix{
 \psi_{I, R} \ar[d]^{\wr} & \subseteq  & \Hom_R(I, R)  \ar[d]^{\wr}  \\
 \alpha I & \subseteq & R:I 
}
$$
of $R$-modules, we have that $A$ is Gorenstein if and only if $\alpha I=R:I$ by Theorem \ref{higherGor} and Remark \ref{rem4.15}. 

(2) $\Rightarrow$ (3): Since $1\in R:~I=\alpha I$, we have $\alpha \in \rmQ(R)^\times $ and $\alpha^{-1} \in I=\alpha^{-1}\cdot (R:I)=\left(\alpha^{-1}\right):_R I$. 

(3) $\Rightarrow$ (2): This is clear since $\alpha^{-1}\cdot (R:I)=\left(\alpha^{-1}\right):_R I$.
\end{proof}

In the case of dimension one, we can construct Gorenstein rings $A=R\times_\varphi I$ by using the notion of good ideals in the sense of \cite{GIW}. In one-dimensional Gorenstein local ring $(R, \fkm)$, an $\fkm$-primary ideal $I$ is called a {\it good ideal} if $I^2=aI$ and $I=(a):_R I$ for some parameter ideal $(a)\subseteq I$ (this definition is not the usual one, but equivalent to it under our assumption; see \cite[Proposition (2.2)]{GIW}). It is also known that there exists a one-to-one correspondence between the set of good ideals and the set of Gorenstien birational extensions of $R$ (\cite[Theorem (4.2)]{GIW}).

\begin{cor}
Suppose that $R$ is a Gorenstein local ring of dimension one. Let $C$ be a Gorenstien ring such that $R\subsetneq C\subseteq \rmQ(R)$ and $C$ is finitely generated as an $R$-module. 
Then, $I:=R:C$ is a good ideal of $R$; hence, $R\times_\varphi I$ is Gorenstein, where $a\in I$ such that $I^2=aI$, $I=(a):_R I$ and  
\[
\varphi: I \times I \to R; (x, y)\mapsto a^{-1} xy.
\]
\end{cor}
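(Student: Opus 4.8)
The plan is to recognize $C$ as a Gorenstein, module-finite, birational extension of $R$, invoke \cite[Theorem (4.2)]{GIW} to conclude that $I=R:C$ is a good ideal, and then feed this into Corollary \ref{cc418}.

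First I would check that $I=R:C$ is a proper $\fkm$-primary ideal of $R$. Since $R\subseteq C$ we have $R:C\subseteq R:R=R$, and $R:C=R$ would force $C\subseteq R$, hence $C=R$, contradicting $R\subsetneq C$; so $I$ is a proper ideal. Because $C$ is module-finite over $R$ and contained in $\rmQ(R)$, localizing at the non-zerodivisors of $R$ yields $\rmQ(R)\otimes_R C=\rmQ(R)$, so $C/R$ is a finitely generated torsion $R$-module, hence of finite length; as $C\neq R$, the ideal $I=\Ann_R(C/R)$ is $\fkm$-primary. Since $R$ is a one-dimensional Gorenstein local ring and $C$ is a Gorenstein ring, module-finite and birational over $R$ with $C\neq R$, \cite[Theorem (4.2)]{GIW} applies and shows that $I$ is a good ideal of $R$.

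By the definition of good ideals, choose $a\in I$ with $(a)$ a parameter ideal, $I^2=aI$ and $I=(a):_R I$; here $a$ is a non-zerodivisor because $(a)\subseteq I$ is $\fkm$-primary. The map $\varphi\colon I\times I\to R$, $(x,y)\mapsto a^{-1}xy$, is well defined since $a^{-1}I^2=I\subseteq R$, and it satisfies the two conditions of Definition \ref{def1}; put $\alpha=a^{-1}\in\rmQ(R)^\times$. Next I would verify the hypotheses of Corollary \ref{cc418}: $R$ is Gorenstein of dimension $1>0$; the $\fkm$-primary ideal $I$ is a maximal Cohen-Macaulay $R$-module containing the non-zerodivisor $a$; and $I\ncong R$, for otherwise $I=(b)$ with $b$ a non-zerodivisor, whence $(b^2)=I^2=aI=(ab)$ forces $b=(\text{unit})\cdot a$, so $I=(a)$ and then $I=(a):_R I=R$, contradicting $R\subsetneq C=R:I$. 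Finally, condition $(3)$ of Corollary \ref{cc418} holds for $\alpha=a^{-1}$: indeed $\alpha\in\rmQ(R)^\times$, $\alpha^{-1}=a\in I$, and $(\alpha^{-1}):_R I=(a):_R I=I$. Hence $R\times_\varphi I$ is Gorenstein. The only substantive input is \cite[Theorem (4.2)]{GIW}; the remaining steps are routine bookkeeping with the good-ideal identities together with the already-established Corollary \ref{cc418}.
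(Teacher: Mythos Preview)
Your proof is correct and follows essentially the same approach as the paper: identify $I=R:C$ as a good ideal via \cite{GIW}, then apply Corollary \ref{cc418} with $\alpha=a^{-1}$. The paper cites \cite[proof of Proposition (2.2)]{GIW} rather than Theorem (4.2), and omits the verifications (that $I$ is $\fkm$-primary, that $I\ncong R$, and that condition (3) of Corollary \ref{cc418} holds) which you carry out explicitly; these checks are routine but worth recording.
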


\begin{proof}
We note that $I$ is a good ideal of $R$ by \cite[proof of Proposition (2.2)]{GIW}.
We then apply Corollary \ref{cc418} with $\alpha=a^{-1}$.
\end{proof}

We note here the Gorenstein property of $A=R\times_{\varphi} \fkm$ in dimension one.

\begin{cor}\label{c417}
Suppose that $R$ is a Gorenstein ring of dimension one, but not a discrete valuation ring. Then the following conditions are equivalent. 
\begin{enumerate}[$(1)$]
\item $A=R\times_{\varphi} \m$ is Gorenstein.
\item $\alpha \in \rmQ(R)^\times$ and $\alpha^{-1} \in \m$.
\item $\rme(R)=2$, $\alpha \in \rmQ(R)^\times$, $\alpha^{-1}\in \m$ , and $\m^2 = \alpha^{-1} \m$.
\end{enumerate}
\end{cor}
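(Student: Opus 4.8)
The plan is to read off Corollary \ref{c417} from Corollary \ref{cc418} applied with $I=\fkm$, together with a short length computation. First I would check that Corollary \ref{cc418} applies: since $R$ is Cohen--Macaulay of dimension one, $\fkm\notin\Ass(R)$, so $\fkm$ contains a non-zerodivisor and is a maximal Cohen--Macaulay $R$-module; and $\fkm\cong R$ would force $\fkm$ to be principal, i.e.\ $R$ to be a discrete valuation ring, which is excluded. Hence Corollary \ref{cc418} gives: $A=R\times_\varphi\fkm$ is Gorenstein if and only if $\alpha\fkm=R:\fkm$. From this, $(1)\Rightarrow(2)$ is immediate, since $\alpha\fkm=R:\fkm\ni 1$ yields $1=\alpha m$ for some $m\in\fkm$, whence $\alpha\in\rmQ(R)^\times$ and $\alpha^{-1}=m\in\fkm$; and $(3)\Rightarrow(2)$ is trivial. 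So the work is to prove $(2)\Rightarrow(1)$ and $(2)\Rightarrow(3)$.

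I would then assume $(2)$ and put $a:=\alpha^{-1}$, which is a non-zerodivisor of $R$ lying in $\fkm$. The first observation is that the standing hypothesis $\alpha\in\fkm:\fkm^2$ reads $\fkm^2\subseteq a\fkm$, while $a\in\fkm$ gives $a\fkm\subseteq\fkm^2$; hence $\fkm^2=a\fkm=\alpha^{-1}\fkm$, which already supplies one of the conditions in $(3)$ and shows $(a)$ is a reduction of $\fkm$. Multiplying $\fkm^2=a\fkm$ by $a^{-1}$ gives $(a^{-1}\fkm)\fkm=\fkm\subseteq R$, so $a^{-1}\fkm\subseteq R:\fkm$; and $1=a^{-1}a\in a^{-1}\fkm$, so $R\subseteq a^{-1}\fkm$. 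Since $R$ is one-dimensional Gorenstein, $\ell_R((R:\fkm)/R)=\rmr(R)=1$, so the chain $R\subseteq a^{-1}\fkm\subseteq R:\fkm$ forces either $a^{-1}\fkm=R$ or $a^{-1}\fkm=R:\fkm$. The first possibility would make $\fkm=aR$ principal, contradicting the hypothesis that $R$ is not a discrete valuation ring; hence $a^{-1}\fkm=R:\fkm$, i.e.\ $\alpha\fkm=R:\fkm$, and $(2)\Rightarrow(1)$ follows from Corollary \ref{cc418}.

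To finish $(2)\Rightarrow(3)$ it remains to compute the multiplicity. From $a^{-1}\fkm=R:\fkm$ and $\ell_R((R:\fkm)/R)=1$ we get $\ell_R(a^{-1}\fkm/R)=1$, hence $\ell_R(\fkm/aR)=1$ after multiplying by the non-zerodivisor $a$. Since $aR\supseteq\fkm^2$, this gives $\ell_R(R/aR)=\ell_R(R/\fkm)+\ell_R(\fkm/aR)=2$; and since $(a)$ is a reduction of $\fkm$ and $a$ is a non-zerodivisor in the one-dimensional Cohen--Macaulay ring $R$, $\rme(R)=\ell_R(R/aR)=2$. Together with the data already obtained, this is exactly $(3)$.

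The step carrying the (modest) weight of the argument is the dichotomy $a^{-1}\fkm=R$ or $a^{-1}\fkm=R:\fkm$ in the second paragraph: it is what forces $\alpha\fkm=R:\fkm$ and, in particular, makes the extra colon condition $\fkm=(\alpha^{-1}):_R\fkm$ appearing in Corollary \ref{cc418}(3) automatic in the present situation. The rest is routine length bookkeeping.
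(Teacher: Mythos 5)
Your proof is correct and follows essentially the same strategy as the paper: specialize Corollary \ref{cc418} to $I=\fkm$, extract $\fkm^2=\alpha^{-1}\fkm$ from the standing hypothesis $\alpha\in\fkm:\fkm^2$ together with $\alpha^{-1}\in\fkm$, and then use the Gorenstein and non-DVR hypotheses to close the argument. The only cosmetic difference is that you route $(2)\Rightarrow(1)$ through condition $(2)$ of Corollary \ref{cc418} (i.e.\ $\alpha\fkm=R:\fkm$, derived from the chain $R\subseteq\alpha\fkm\subseteq R:\fkm$ having colength $\rmr(R)=1$) whereas the paper routes it through condition $(3)$ of Corollary \ref{cc418} (i.e.\ $\fkm=(\alpha^{-1}):_R\fkm$); since those two conditions are equivalent by Corollary \ref{cc418} itself, this is the same proof, with your version slightly more explicit in checking the hypotheses of Corollary \ref{cc418} and in the length bookkeeping for $\rme(R)=2$.
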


\begin{proof}
(1) $\Rightarrow$ (2): This follows from Corollary \ref{cc418}(1) $\Rightarrow$ (3). 

(2) $\Rightarrow$ (1), (3): Suppose that $\alpha \in \rmQ(R)^\times$ and $\alpha^{-1} \in \m$. Since $\alpha \m^2= \varphi(\m, \m)\subseteq \m$, we have $\m^2\subseteq \alpha^{-1}\m\subseteq \m^2$. It follows that $\m=\left( \alpha^{-1} \right):_R \m$ and $\m^2=\alpha^{-1}\m$. Hence, $\rme(R)=2$ since $R$ is Gorenstein. The assertion (1) also follows from Corollary \ref{cc418}(3) $\Rightarrow$ (1). 

(3) $\Rightarrow$ (2): This is clear.
\end{proof}

\begin{rem}\label{r418}
If $R$ is a discrete valuation ring, then $R\times_{\varphi} \m$ is always Gorenstein because $\m\cong R=\rmK_R$.

\end{rem}


\section{Appendix 2: The regularity of $\mathbb{Z}_2$-graded local rings}\label{s7}

In this appendix, we note the regularity of $A$. We should emphasize that the idealization $R\times_0 M$ of a nonzero $R$-module $M$ is never to be reduced, and thus not a regular ring. 
We maintain Setup \ref{setup2}.


\begin{lem}\label{embdim}
The equality 
\[
v(A)=\mu_R(\fkm/\varphi(M, M)) + \mu_R(M)
\] 
holds. Therefore, if $\fkm=(c_1, c_2, \dots, c_s) +~\varphi(M, M)$ and $M=(x_1, x_2, \dots, x_t)$, where $s=\mu_R(\fkm/\varphi(M, M))$ and $t=\mu_R(M)$, then 
\[
(c_1, 0), \dots, (c_s, 0),  (0,x_1), \dots, (0,x_t)
\]
is a system of minimal generators of $\fkn=\fkm\times M$.
\end{lem}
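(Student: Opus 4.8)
The plan is to compute the square of the maximal ideal $\fkn=\fkm\times M$ of $A$ explicitly and then extract $v(A)=\mu_A(\fkn)=\dim_{R/\fkm}(\fkn/\fkn^2)$ from the resulting direct-sum decomposition; here I use that $A$ is Noetherian local with maximal ideal $\fkn$ (Proposition~\ref{noether}, Corollary~\ref{localc}) and that $A/\fkn\cong R/\fkm$.

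First I would apply Proposition~\ref{graded}(3) with $J_1=J_2=\fkn$ to get
\[
\fkn^2=\bigl[\fkm^2+\varphi(M,M)\bigr]\times\fkm M,
\]
where $\varphi(M,M)$ denotes the $R$-submodule of $R$ generated by the values $\varphi(x,y)$, $x,y\in M$. Since $\fkm^2+\varphi(M,M)\subseteq\fkm$ (using $\varphi(M\times M)\subseteq\fkm$) and $\fkm M\subseteq M$, the grading gives a direct-sum decomposition of $R/\fkm$-vector spaces
\[
\fkn/\fkn^2\;\cong\;\frac{\fkm}{\fkm^2+\varphi(M,M)}\;\oplus\;\frac{M}{\fkm M}.
\]
The second summand has dimension $\mu_R(M)$. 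For the first, one notes $\fkm\cdot(\fkm/\varphi(M,M))=(\fkm^2+\varphi(M,M))/\varphi(M,M)$, so $\fkm/(\fkm^2+\varphi(M,M))$ is the "top" of $\fkm/\varphi(M,M)$ and hence has dimension $\mu_R(\fkm/\varphi(M,M))$. Adding the two dimensions yields $v(A)=\mu_R(\fkm/\varphi(M,M))+\mu_R(M)$, the asserted equality.

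For the last statement, by Nakayama one can choose $c_1,\dots,c_s\in\fkm$ and $x_1,\dots,x_t\in M$ with $\fkm=(c_1,\dots,c_s)+\varphi(M,M)$, $s=\mu_R(\fkm/\varphi(M,M))$, and $M=(x_1,\dots,x_t)$, $t=\mu_R(M)$, both minimal. By Proposition~\ref{graded}(2) the ideal of $A$ generated by $(c_1,0),\dots,(c_s,0),(0,x_1),\dots,(0,x_t)$ equals $\bigl[(c_1,\dots,c_s)+\varphi(M,M)\bigr]\times\bigl[(c_1,\dots,c_s)M+M\bigr]=\fkm\times M=\fkn$, so these $s+t$ elements generate $\fkn$. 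Under the decomposition above, $(c_i,0)$ maps to the class of $c_i$ in $\fkm/(\fkm^2+\varphi(M,M))$ and $(0,x_j)$ to the class of $x_j$ in $M/\fkm M$; by minimality these classes are bases of the respective summands, hence together form a basis of $\fkn/\fkn^2$. Since $s+t=v(A)=\mu_A(\fkn)$, the listed elements form a minimal system of generators of $\fkn$ by Nakayama's lemma.

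There is essentially no serious obstacle here; the only points demanding a little care are interpreting $\varphi(M,M)$ as the submodule it generates (so the displayed isomorphisms are literally correct) and invoking $A/\fkn\cong R/\fkm$ in order to compute $v(A)$ as an $R/\fkm$-dimension. Everything else is a routine application of Proposition~\ref{graded}.
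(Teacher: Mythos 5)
Your proof is correct and takes essentially the same route as the paper: both identify $\fkn=\fkm\times M$ as the maximal ideal via Corollary~\ref{localc}, compute $\fkn^2=[\fkm^2+\varphi(M,M)]\times\fkm M$ from Proposition~\ref{graded}(3), and read off $v(A)$ as $\ell_R(\fkm/(\fkm^2+\varphi(M,M)))+\ell_R(M/\fkm M)$ using $A/\fkn\cong R/\fkm$. You spell out the identification $\ell_R(\fkm/(\fkm^2+\varphi(M,M)))=\mu_R(\fkm/\varphi(M,M))$ and the verification that the listed elements do generate $\fkn$ (which the paper leaves implicit), but these are only elaborations, not a different argument.
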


\begin{proof}
By Corollary \ref{localc}, $\fkn=\fkm\times M$ is the maximal ideal of $A$. Hence, 
\begin{align*}
v(A)=&\ell_A((\fkm\times M)/(\fkm\times M)^2)=\ell_A((\fkm\times M)/[(\fkm^2 + \varphi(M, M))\times \fkm M])\\
=& \ell_R(\fkm/(\fkm^2 + \varphi(M, M))) + \ell_R(M/\fkm M),
\end{align*}
where the last equality follows from the isomorphism $R/\fkm \cong A/(\fkm \times M)$.
\end{proof}

\begin{cor}\label{r62}
$A$ is a regular local ring if and only if  $d=\mu_R(\fkm/\varphi(M, M)) + \mu_R(M)$. In particular, we have the following.
\begin{enumerate}[\rm(1)] 
\item If $A$ is regular, then $1\le \mu_R(M) \le d$.
\item If $d=0$, $A$ is never to be regular. 
\end{enumerate}
\end{cor}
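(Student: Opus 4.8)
The plan is to read off the corollary directly from Lemma \ref{embdim} together with the standard characterization of regular local rings, so the argument is essentially bookkeeping. First I would check that $A$ is a Noetherian local ring, so that the word ``regular'' is meaningful: $A$ is local with maximal ideal $\fkn=\fkm\times M$ by Corollary \ref{localc} (using $\varphi(M\times M)\subseteq\fkm$, which is part of Setup \ref{setup2}), and $A$ is Noetherian by Proposition \ref{noether}, since $R$ is Noetherian and $M$ is finitely generated. Hence $A$ is a regular local ring if and only if $v(A)=\dim A$.

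Next I would substitute the two known identities: $\dim A=\dim R=d$ by Remark \ref{integral}, and $v(A)=\mu_R(\fkm/\varphi(M,M))+\mu_R(M)$ by Lemma \ref{embdim}. Combining them gives at once the asserted equivalence that $A$ is regular if and only if $d=\mu_R(\fkm/\varphi(M,M))+\mu_R(M)$.

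For the two particular statements I would use that $M$ is nonzero, hence $\mu_R(M)=\ell_R(M/\fkm M)\ge 1$ by Nakayama's lemma. If $A$ is regular, then $d=\mu_R(\fkm/\varphi(M,M))+\mu_R(M)\ge\mu_R(M)\ge 1$, while also $\mu_R(M)\le d$ because $\mu_R(\fkm/\varphi(M,M))\ge 0$; this is (1). If $d=0$, the equivalence would force $\mu_R(M)=0$, contradicting $\mu_R(M)\ge 1$, so $A$ cannot be regular; this is (2). There is no real obstacle here: all of the substance is already contained in Lemma \ref{embdim} and Remark \ref{integral}, and the only point needing a moment's care is confirming that $A$ satisfies the hypotheses (Noetherian and local) under which regularity is defined, which is precisely what Proposition \ref{noether} and Corollary \ref{localc} supply.
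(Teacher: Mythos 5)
Your proof is correct and is precisely what the paper intends: the corollary follows immediately from Lemma \ref{embdim} combined with $\dim A=\dim R=d$ (Remark \ref{integral}), with the Noetherian and local hypotheses supplied by Proposition \ref{noether} and Corollary \ref{localc}; the paper simply states the corollary without a written proof because this bookkeeping is routine.
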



In what follows, we suppose that $d>0$ unless otherwise noted. 
We note that there exist examples of regular rings $A$ such that $\mu_R(M)$ can be arbitrary among the interval $1\le \mu_R(M) \le d$.

\begin{ex}\label{exr1}
Let $R$ be a regular local ring of dimension $d$. Set $\varphi: R \times R \to R$ such that $\varphi(1,1)\not\in \fkm^2$. Then $A=R\times_{\varphi} R$ is regular.
\end{ex}

\begin{proof}
This follows from the equation $v(A)=\mu_R(\fkm/\varphi(R, R)) + \mu_R(R) = (d-1) + 1=d$.
\end{proof}

\begin{ex}\label{exsec5}
Let $K$ be a field. Let $m, n$ be non-negative integers such that $n\ge 2$. Let $K[[x_1, x_2, \dots, x_{n-1}, y_1, y_2, \dots, y_m, t]]$ be the formal power series ring over $K$. Set 
\[
R=K[[t, x_i t, x_j  x_k t, y_\ell \mid 1 \le i,j,k \le n-1, 1 \le \ell \le m]].
\]
Let 
\[
I=(t, x_i t \mid 1 \le i \le n-1)
\]
and $\varphi: I \times I \to R; (f, g) \mapsto \frac{fg}{t}$. Then $A=R\times_{\varphi} I$ is a regular local ring of dimension $m+n$ such that $\mu_R(\fkm/\varphi(I, I)) =m$ and  $\mu_R(I)=n$.
\end{ex}

\begin{proof}
It is straightforward to check that $\varphi(I, I)=\frac{I^2}{t}=(t, x_i t, x_j  x_k t \mid 1 \le i,j,k \le n-1)$. Hence, 
\[
v(A)=\mu_R(\fkm/\varphi(I, I)) + \mu_R(I)  =m+n,
\]
where $\fkm$ denotes the maximal ideal of $R$, by Lemma \ref{embdim}. On the other hand, $\dim A=\dim R=m+n$ by \cite[Theorem 6.1.7]{BH} (or localize with the multiplicative set $\{t^p \mid p\ge 0\}$). It follows that $A$ is regular.
\end{proof}

\begin{prop}\label{p55}
Suppose that $A$ is a regular ring. 
Then  we have the following.
\begin{flalign*}
&(1) \ \text{$R$ is a Cohen-Macaulay domain. } \quad (2) \ \text{$R/\varphi(M, M)$ is regular. }&&\\
&(3) \ \text{$M$ is a maximal Cohen-Macaulay $R$-module and $M\cong I$ for some ideal $I$ of $R$. } 
\end{flalign*}
\end{prop}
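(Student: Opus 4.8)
The plan is to read off the three conclusions from the local structure of $A$ developed in Sections~\ref{s3} and~\ref{s7}, together with the rigidity statement Theorem~\ref{rank}. Since $A=R\times_\varphi M$ with $\varphi(M\times M)\subseteq\fkm$, the ring $A$ is local with maximal ideal $\fkn=\fkm\times M$ by Corollary~\ref{localc}, so ``$A$ regular'' means $(A,\fkn)$ is a regular local ring, hence a Cohen--Macaulay local domain. By Corollary~\ref{CM} this already forces $R$ to be Cohen--Macaulay and $M$ to be a maximal Cohen--Macaulay $R$-module, which covers the Cohen--Macaulay parts of (1) and (3). To finish (1), note that $a\mapsto(a,0)$ is an injective ring homomorphism $R\hookrightarrow A$ (this is the inclusion witnessing the integrality in Remark~\ref{integral}), so $R$, being a subring of the domain $A$, is a domain.

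For (3), I would first observe that $M$ is torsion-free: being maximal Cohen--Macaulay over the Cohen--Macaulay local domain $R$, each $\fkp\in\Ass_R M$ satisfies $\dim R/\fkp\ge\depth_R M=\dim R$, hence $\fkp=(0)$; so $\Ass_R M=\{(0)\}$, and since $M\neq 0$ its rank is positive. Next I rule out $\rank_R M\ge 2$: if $\rank_R M=\ell\ge 2$, then $\varphi=0$ by Theorem~\ref{rank}(1), so $A$ is the ordinary idealization $R\times_0 M$; but then $(0,x)^2=(\varphi(x,x),0)=(0,0)$ for every $x\in M$, and picking $0\neq x\in M$ contradicts that $A$ is a domain. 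Therefore $\rank_R M=1$, and a nonzero finitely generated torsion-free module of rank one over a Noetherian domain embeds as an ideal $I$ of $R$, giving $M\cong I$.

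For (2), I would use a regular system of parameters of $A$. By Corollary~\ref{r62}, regularity of $A$ is equivalent to $v(A)=d$, so writing $\fkm=(c_1,\dots,c_s)+\varphi(M,M)$ and $M=(x_1,\dots,x_t)$ with $s=\mu_R(\fkm/\varphi(M,M))$ and $t=\mu_R(M)$ (so that $s+t=d$ by Lemma~\ref{embdim}), the elements $(c_1,0),\dots,(c_s,0),(0,x_1),\dots,(0,x_t)$ form a minimal system of generators of $\fkn$, hence a regular system of parameters of $A$. The ideal of $A$ generated by $(0,x_1),\dots,(0,x_t)$ is the graded ideal generated by $0\times M$, which equals $\varphi(M,M)\times M$ by Proposition~\ref{graded}(2); being generated by the $t$ members $(0,x_j)$ of a regular system of parameters, the quotient $A/(\varphi(M,M)\times M)$ is a regular local ring of dimension $d-t$. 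Finally Proposition~\ref{graded}(4) identifies this quotient with $R/\varphi(M,M)$ (the module component collapsing to $0$), so $R/\varphi(M,M)$ is regular.

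The steps using associated primes of maximal Cohen--Macaulay modules and quotients of a regular local ring by part of a regular system of parameters are routine; the one genuinely structural point is excluding $\rank_R M\ge 2$ in (3), where one must combine Theorem~\ref{rank}(1) with the fact that a nonzero idealization is never reduced.
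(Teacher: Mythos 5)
Your proof is correct and follows essentially the same route as the paper's: part (1) via Corollary~\ref{CM} and the injection $R\hookrightarrow A$; part (2) by observing that $(0,x_1),\dots,(0,x_t)$ form part of a regular system of parameters generating $\varphi(M,M)\times M$, whose quotient is $R/\varphi(M,M)$; and part (3) by ruling out $\rank_R M\ge 2$ through Theorem~\ref{rank}(1) and the non-reducedness of a nontrivial idealization, then embedding the rank-one torsion-free $M$ as an ideal. You merely spell out a couple of steps that the paper leaves implicit (the computation $\Ass_R M=\{(0)\}$ to get torsion-freeness, and the explicit appeal to Proposition~\ref{graded}(4) for the identification of the quotient ring), so no further comment is needed.
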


\begin{proof}
(1): $R$ is Cohen-Macaulay by Theorem \ref{CM}. Since $A$ is a domain and there is an injective map $R \to A;\ a\mapsto (a, 0)$, $R$ is a domain.

(2): Let $x_1, x_2, \ldots, x_s\in M$ be a minimal generators of $M$. Then, since $(0, x_1), (0, x_2), \ldots, (0, x_s)$ is a part of minimal generators of the maximal ideal $\m\times M$ (Lemma \ref{embdim}) and 
$$\left( (0, x_1), (0, x_2), \ldots, (0, x_s) \right) A=\left( 0\times M \right)A=\varphi(M, M)\times M, $$
we obtain that $A/\left(\varphi(M, M)\times M\right) \cong R/\varphi(M, M)$ is also a regular local ring.

(3): $M$ is a maximal Cohen-Macaulay $R$-module by Theorem \ref{CM}. Since $R$ is a Cohen-Macaulay domain by (1), $M$ has a positive rank $\ell$. If $\ell\ge 2$, then $A$ is the idealization of $M$ by Theorem \ref{rank}(1). This is a contradiction since the nilradical of the idealization $A$ contains $(0)\times M$. Hence, $\ell=1$. Since $M$ is torsionfree, this concludes that $M$ can be embedded into $R$ (see, for example, \cite[Excercise 1.4.18]{BH}).
\end{proof}

When $1\le \mu_R(M)\le 2$, we can characterize the regularity of $A$ via the triad $(R, M, \varphi)$.

\begin{cor}
Suppose that $\mu_R(M)=1$. The following are equivalent.
\begin{enumerate}[\rm(1)] 
\item $A$ is a regular local ring.
\item $R$ is a regular local ring, $M\cong R$, and $\varphi(\mathbf{e}, \mathbf{e})\not\in \fkm^2$, where $\mathbf{e}$ denotes a free basis of $M$.
\end{enumerate}
\end{cor}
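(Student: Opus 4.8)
The plan is to apply Corollary \ref{r62} together with Proposition \ref{p55} to reduce the statement to an elementary computation. First I would prove the implication $(2)\Rightarrow(1)$: if $R$ is regular of dimension $d$ and $M\cong R$ with free basis $\mathbf{e}$, then $\varphi$ has the form $\varphi(x\mathbf{e}, y\mathbf{e}) = axy$ with $a = \varphi(\mathbf{e},\mathbf{e})$ by Proposition \ref{p34}(2), so $\varphi(M,M) = aR = (a)$. The hypothesis $\varphi(\mathbf{e},\mathbf{e})\notin\fkm^2$ means $a\in\fkm\setminus\fkm^2$ (note $a\in\fkm$ automatically since $\varphi(M\times M)\subseteq\fkm$ by Setup \ref{setup2}, using that $A$ must be local; alternatively $a\ne 0$ since otherwise $\fkm/\varphi(M,M)=\fkm$ and the dimension count fails), so $(a)$ is generated by a regular parameter and $\mu_R(\fkm/(a)) = d-1$. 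Hence $\mu_R(\fkm/\varphi(M,M)) + \mu_R(M) = (d-1)+1 = d = \dim A$, and $A$ is regular by Corollary \ref{r62}.

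For the converse $(1)\Rightarrow(2)$, suppose $A$ is regular. By Proposition \ref{p55}(1) and (3), $R$ is a Cohen-Macaulay domain and $M$ is a maximal Cohen-Macaulay $R$-module isomorphic to an ideal $I$ of $R$. Since $\mu_R(M) = 1$, $M$ is cyclic, so $M\cong R/\Ann_R(M)$; being torsionfree over the domain $R$ forces $\Ann_R(M) = 0$, i.e. $M\cong R$. By Proposition \ref{p55}(2), $R/\varphi(M,M) \cong A/(\varphi(M,M)\times M)$ is regular; combined with the dimension count from Corollary \ref{r62}, namely $d = \mu_R(\fkm/\varphi(M,M)) + 1$, we get $\mu_R(\fkm/\varphi(M,M)) = d-1$, so $R$ itself is regular (it is a Cohen-Macaulay local ring whose embedding dimension satisfies $v(R) = \mu_R(\fkm) \le \mu_R(\fkm/\varphi(M,M)) + 1 = d = \dim R$). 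Writing $M = R\mathbf{e}$ and $a = \varphi(\mathbf{e},\mathbf{e})$, we have $\varphi(M,M) = (a)$, and the equality $\mu_R(\fkm/(a)) = d-1 = \mu_R(\fkm) - 1$ forces $a\in\fkm\setminus\fkm^2$, that is, $\varphi(\mathbf{e},\mathbf{e})\notin\fkm^2$.

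The only mildly delicate point — and the place I would be most careful — is the interplay between the two competing normalizations of $\varphi$: in the general regularity section $\varphi(M\times M)\subseteq\fkm$ is assumed (Setup \ref{setup2}), whereas Proposition \ref{p34} allows $a = \varphi(\mathbf{e},\mathbf{e})$ to be a unit. One must check this causes no gap: in direction $(2)\Rightarrow(1)$ the value $\varphi(\mathbf{e},\mathbf{e})\notin\fkm^2$ is compatible with, but does not force, $a\in\fkm$, yet the dimension count $v(A) = \mu_R(\fkm/(a)) + 1$ only yields $d$ when $a$ is a nonzero nonunit; if $a$ were a unit then $\varphi(M,M) = R$, $\mu_R(\fkm/\varphi(M,M)) = 0$, and $v(A) = 1 \ne d$ unless $d = 1$, while if $a$ is a unit one actually should recheck that $A$ is even local. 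I would resolve this by observing that Corollary \ref{r62} is invoked under the hypotheses of Setup \ref{setup2}, where $\varphi(M\times M)\subseteq\fkm$ is in force, so $a\in\fkm$ throughout; then $a\notin\fkm^2$ is exactly the condition that makes $(a)$ part of a minimal generating set of $\fkm$, and everything goes through. Everything else is a routine application of Lemma \ref{embdim} and the cited structure results.
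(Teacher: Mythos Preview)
Your proof is correct and follows essentially the same route as the paper: both directions reduce to the dimension count $v(A)=\mu_R(\fkm/\varphi(M,M))+\mu_R(M)$ from Lemma~\ref{embdim}/Corollary~\ref{r62}, together with the observation that $\mu_R(M)=1$ over the domain $R$ forces $M\cong R$ (Proposition~\ref{p55}). Your invocation of Proposition~\ref{p55}(2) is harmless but unnecessary---the embedding-dimension inequality $v(R)\le \mu_R(\fkm/(a))+1=d$ already follows from the principality of $\varphi(M,M)=(a)$---and your closing paragraph correctly resolves the unit/nonunit ambiguity by appealing to Setup~\ref{setup2}, which is exactly the standing hypothesis in the paper.
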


\begin{proof}
The implication $(2)\Rightarrow(1)$ follows from Example \ref{exr1}.

$(1)\Rightarrow(2)$: Since $\mu_R(M)=1$ and $R$ is domain, we obtain that $M\cong R$ by Proposition \ref{p55}$(3)$. Let $a=\varphi(\mathbf{e}, \mathbf{e})\in\m$, where $\mathbf{e}$ denotes a free basis of $M$. Since 
$$\mu_R(\m/(\m^2+aR))=\mu_R(\m/(\m^2+\varphi(M, M)))=v(A)-\mu_R(M)=d-1$$
by Lemma \ref{embdim},
we have $v(R)\le d$. This  induces that $R$ is regular and $a=\varphi(\mathbf{e}, \mathbf{e})\notin \m^2$.
\end{proof}

Recall that $R$ is {\it hypersurface} if $v(R)\le d+1$.

\begin{thm}
Suppose that $\mu_R(M)= 2$. The following are equivalent.
\begin{enumerate}[\rm(1)] 
\item $A$ is a regular local ring.
\item The following hold true.
\begin{enumerate}[\rm(i)] 
\item $R$ is a non-regular hypersurface domain with $d \ge 2$. 
\item $M\cong I$ for some ideal $I$ of $R$ such that $\mu_R(I)=2$. 
\item $\varphi(M, M)=(a_1, a_2, a_3)$, where $a_1, a_2, a_3$ is a part of minimal generators of $\fkm$.
\end{enumerate}
\end{enumerate}

\end{thm}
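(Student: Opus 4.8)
The plan is to reduce the statement to the embedding-dimension identity
\[
v(A)=\mu_R(\fkm/\varphi(M,M))+\mu_R(M)
\]
of Lemma \ref{embdim}: by Corollary \ref{r62}, $A$ is regular if and only if $\mu_R(\fkm/\varphi(M,M))=d-2$. Combining this with the standard formula $\mu_R(\fkm/\varphi(M,M))=v(R)-\ell_R((\varphi(M,M)+\fkm^2)/\fkm^2)$ and with the structural consequences of regularity collected in Proposition \ref{p55} together with the rank dichotomy of Proposition \ref{p34}(1), both implications should fall out.

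For $(2)\Rightarrow(1)$ I would argue purely numerically. Condition (iii) says precisely that $\ell_R((\varphi(M,M)+\fkm^2)/\fkm^2)=3$, and condition (i) gives $v(R)=d+1$ (a non-regular hypersurface). Hence $\mu_R(\fkm/\varphi(M,M))=(d+1)-3=d-2$, and since $\mu_R(M)=2$ by hypothesis and $\dim A=\dim R=d$ by Remark \ref{integral}, Corollary \ref{r62} gives that $A$ is regular. Condition (ii) is not needed in this direction; it is part of the list only because it is forced by (1), via Proposition \ref{p55}(3).

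For $(1)\Rightarrow(2)$, assume $A$ is regular. Proposition \ref{p55} already gives that $R$ is a Cohen--Macaulay domain and that $M\cong I$ for some ideal $I$ of $R$; since $\mu_R(M)=2$, this is (ii). The crux is to prove that $R$ is \emph{not} regular. Suppose it were. Then $A$ is a module-finite extension of $R$ with $\dim A=\dim R=d$, so by Fact \ref{f310} one has $\depth_R A=\depth A=d$, and the Auslander--Buchsbaum formula over the regular ring $R$ gives $\pd_R A=0$, i.e. $A$ is $R$-free; hence its $R$-module direct summand $M$ is free of rank $\mu_R(M)=2$. By Proposition \ref{p34}(1) this forces $\varphi=0$, so $A=R\ltimes M$ is an idealization, which contains the nonzero nilpotent ideal $0\times M$ --- impossible for the regular (in particular reduced) ring $A$. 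Thus $R$ is non-regular. Now write $M=Rx_1+Rx_2$; using symmetry of $\varphi$, the ideal $\varphi(M,M)$ is generated by $\varphi(x_1,x_1),\varphi(x_1,x_2),\varphi(x_2,x_2)$, so $\mu_R(\varphi(M,M))\le 3$. Put $e:=\ell_R((\varphi(M,M)+\fkm^2)/\fkm^2)$, so $e\le\mu_R(\varphi(M,M))\le 3$. By Corollary \ref{r62}, $\mu_R(\fkm/\varphi(M,M))=d-2$, hence $v(R)=d-2+e$. Since $R$ is non-regular, $v(R)\ge d+1$, forcing $e=3$; consequently $d\ge 2$, $v(R)=d+1$ ($R$ is a hypersurface), $\mu_R(\varphi(M,M))=3$, and the three generators displayed above are linearly independent modulo $\fkm^2$, hence part of a minimal generating set of $\fkm$. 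This establishes (i) and (iii). The substantive step here is the contradiction ``$R$ regular $\Rightarrow\varphi=0$'', obtained through the Auslander--Buchsbaum freeness argument (which crucially uses $\depth_R A=d$); everything else is bookkeeping with the embedding-dimension formula, and that freeness step is the one I expect to require the most care.
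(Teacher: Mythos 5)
Your proof is correct, and in the $(1)\Rightarrow(2)$ direction it takes a genuinely different and arguably cleaner route than the paper. The paper deduces that $R$ is not regular by first using Theorem \ref{rank}(2) to express $\varphi(x,y)=\alpha xy$ on $I$, proving an ad hoc Claim (that a two-generated ideal $J$ with $\mu(J^2)=2$ satisfies $J^2=aJ$), and from $\varphi(I,I)=\fkm$ deriving a contradiction with $\fkm$ being a parameter ideal in a putative two-dimensional regular ring; it then handles $d\ge 3$ by killing a regular sequence to reduce to $d=2$. You instead observe directly that if $R$ were regular then $A$ would be $R$-free by Auslander--Buchsbaum (via Fact \ref{f310}), forcing $M$ to be free of rank $\mu_R(M)=2$, whence Proposition \ref{p34}(1) gives $\varphi=0$ and $A$ would have the nilpotent $0\times M$, a contradiction with regularity. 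This sidesteps both the $\alpha xy$ representation and the Claim, and your bookkeeping $v(R)=(d-2)+e$ with $e=\ell_R\bigl((\varphi(M,M)+\fkm^2)/\fkm^2\bigr)\le 3$ handles all $d$ at once, avoiding the paper's separate $d=2$ and $d\ge 3$ cases. One small stylistic point: once you have invoked Proposition \ref{p55}(3) to obtain $M\cong I$ (rank one), you could cut the argument even shorter, since ``$M$ free of rank $\mu_R(M)=2$'' already contradicts ``$M$ has rank one''; the detour through Proposition \ref{p34}(1) and nilpotents, while correct, is not needed. Your $(2)\Rightarrow(1)$ direction matches the paper's.
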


\begin{proof}
$(2)\Rightarrow(1)$: Conditions in (2) implies that
$$v(A)=\mu_R(\fkm/\varphi(M, M)) + \mu_R(M)=[(d+1)-3]+2=d.$$

$(1)\Rightarrow(2)$: By Lemma \ref{embdim}, we have $0\le \mu_R(\fkm/\varphi(I, I))= v(A)-\mu_R(I)= d-2.$ Thus, $d\ge 2$. (ii) follows from Proposition \ref{p55}(3). Therefore, we may assume that $M= I$ for some ideal $I$ of $R$ such that $\mu_R(I)=2$, and there exists $\alpha \in \rmQ(R)$ such that $\varphi(x, y)=\alpha xy$ for any $x, y\in I$ by Theorem \ref{rank}(2). Note that $\mu_R(\varphi(I, I))=2$ or $3$ since $\varphi(I, I)=\alpha I^2\cong I^2$ and $\mu_R(I)=2$.

Next, we prove (i) and (iii) when $d=2$. We then have $\varphi(I, I)=\m$ since $\mu_R(\fkm/\varphi(I, I))=0$. We need the following claim.

 
 
\begin{claim}\label{claim3}
Let $J$ be an ideal of a local ring $S$. If $\mu_S(J)=\mu_S(J^2)=2$, then $J^2=aJ$ for some $a\in J$.
\end{claim} 
\begin{proof}[Proof of Claim \ref{claim3}]
Let $J=(a, b)$. Since $\mu_S(J^2)=2$, $J^2$ coincides with one of the following:
\begin{center}
$(a^2, ab)=aJ$, $(b^2, ab)=bJ$, or $(a^2, b^2)$.
\end{center} 
In the first two cases, there is nothing to say.
 Suppose that $J^2=(a^2, b^2)$, $J^2\neq aJ$, and $J^2\neq bJ$. Since $ab\in J^2=(a^2, b^2)$, we write $ab=ca^2+db^2$ with $c, d\in R$. If $c\in R^\times$ (resp. $d\in R^\times$), then $a^2\in (b^2, ab)$ (resp. $b^2\in (a^2, ab)$), so that $J^2=bJ$ (resp. $J^2=aJ$). Therefore, we have $c, d\in \m$. Let $a_1=a-b$. Then, 
$$ab=(a_1+b)b=a_1b+b^2\quad \text{and} \quad ca^2+db^2=c(a_1+b)^2+db^2=ca_1^2+2ca_1b+(c+d)b^2.$$
Since $ab=ca^2+db^2$, we have $(1-(c+d))b^2\in (a_1^2, a_1b)$. This implies that $b^2\in (a_1^2, a_1b)$, because $c, d\in \m$. Consequently, since $J=(a_1, b)$, we obtain that $J^2=(a_1^2, a_1b)=a_1J$, as desired.
\end{proof}

Suppose that $d=2$ and $\mu_R(\m)=\mu_R(\varphi(I, I))=2$. Then, $R$ is a two-dimensional regular local ring. 
 We also have 
$$\m=\varphi(I, I)=\alpha I^2\cong I^2=aI\cong I$$ 
for some $a\in I$ by Claim \ref{claim3}. It follows that $\m=\beta I$ for some $\beta \in \rmQ(R)$. Hence, we have $\m^2=\beta^2I^2\cong I^2\cong \m$, which implies that $\mu_R(\m^2)=\mu_R(\m)=2$. This contradicts that $\m$ is a parameter ideal of $R$.
 Therefore,  $\mu_R(\varphi(I, I))=\mu_R(\m)=3$, which induces that $R$ is a non-regular hypersurface domain and $\varphi(I, I)=\m=(a_1, a_2, a_3)$.    

Suppose that $d\ge 3$. Since $\mu_R(\fkm/\varphi(I, I))= d-2$,
we choose $b_1, b_2, \dots, b_{d-2}\in \m$ such that $\m=(b_1, b_2, \dots, b_{d-2}) + \varphi(I, I)$. 
Then, $b_1, b_2, \dots, b_{d-2}$ is a part of minimal generators of $\m$, and  $(b_1, 0), \dots, (b_{d-2}, 0)$ is a part of minimal generators of $\fkn=\m\times M$ by Lemma \ref{embdim}. 
Consider $\overline{A}:= A/((b_1, 0), \dots, (b_{d-2}, 0)) \cong R/\fkb \times_{\ol{\varphi}} M/\fkb M$, where $\fkb=(b_1, b_2, \dots, b_{d-2})$. Then, $\ol{A}$ is a regular local ring of dimension two. By the case of $d=2$, we have $R/\fkb$ is a non-regular hypersurface domain and $\mu_R((\varphi(I, I)+\fkb)/\fkb)=3$. This implies that $R$ is a non-regular hypersurface domain and $\mu_R(\varphi(I, I))=3$, since $\mu_R(\varphi(I, I))=2$ or $3$. 
Since $\m=\varphi(I, I)+\fkb$ and $v(R)=d+1$, we have $\varphi(I, I)=(a_1, a_2, a_3)$, where $a_1, a_2, a_3$ is a part of minimal generators of $\fkm$.
\end{proof}





By the inequality $1\le \mu_R(M) \le d$ in Corollary \ref{r62}(1), we conclude the following. 

\begin{cor}\label{p59}
The following hold true.
\begin{enumerate}[\rm(1)] 
\item If $d=0$, then $A$ is never to be regular.
\item If $d=1$, then $A$ is regular if and only if $R$ is a discrete valuation ring, $M\cong R$, and $\varphi(M, M)=\fkm$.
\item If $d=2$, then $A$ is regular if and only if one of the following hold.
\begin{enumerate}[\rm(i)] 
\item $R$ is a regular local ring, $M\cong R$, and $\varphi(\mathbf{e}, \mathbf{e})\not\in \fkm^2$, where $\mathbf{e}$ denotes a free basis of $M$.
\item The following hold true.
\begin{enumerate}[\rm(a)] 
\item $R$ is a non-regular hypersurface domain. 
\item $M\cong I$ for some ideal $I$ of $R$ such that $\mu_R(I)=2$ and $\mu_R(I^2)=3$.
\item $\varphi(M, M)=\fkm$.
\end{enumerate}
\end{enumerate}
\end{enumerate}
\end{cor}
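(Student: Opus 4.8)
The plan is to deduce all three parts from the earlier structural results, specializing the dimension-by-dimension analysis already carried out. Part (1) is immediate: it is literally Corollary \ref{r62}(2). For part (2), when $d=1$, the bound $1 \le \mu_R(M) \le d$ from Corollary \ref{r62}(1) forces $\mu_R(M) = 1$; then the case $\mu_R(M) = 1$ of the corollary preceding this one applies verbatim, giving that $A$ is regular iff $R$ is a $1$-dimensional regular local ring (i.e.\ a discrete valuation ring), $M \cong R$, and $\varphi(\mathbf{e},\mathbf{e}) \notin \fkm^2$; since in a DVR $\fkm^2 \subsetneq \fkm$ and $\varphi(\mathbf{e},\mathbf{e}) \in \fkm$ (as $\varphi(M,M)\subseteq\fkm$), the condition $\varphi(\mathbf{e},\mathbf{e})\notin\fkm^2$ is equivalent to $\varphi(\mathbf{e},\mathbf{e})$ being a uniformizer, i.e.\ to $\varphi(M,M) = (\varphi(\mathbf{e},\mathbf{e})) = \fkm$. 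I would spell out this last equivalence explicitly since it is the only nontrivial reformulation in part (2).

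For part (3), when $d=2$ the same bound gives $\mu_R(M) \in \{1,2\}$, so I would split into these two cases. The case $\mu_R(M)=1$ is handled by the corollary preceding this one with $d=2$: $A$ is regular iff $R$ is a $2$-dimensional regular local ring, $M\cong R$, and $\varphi(\mathbf{e},\mathbf{e})\notin\fkm^2$; this is exactly alternative (i). The case $\mu_R(M)=2$ is handled by the theorem preceding this one (the $\mu_R(M)=2$ characterization): $A$ is regular iff $R$ is a non-regular hypersurface domain with $d\ge 2$ (here $d=2$ automatically), $M\cong I$ with $\mu_R(I)=2$, and $\varphi(M,M)=(a_1,a_2,a_3)$ for $a_1,a_2,a_3$ part of a minimal generating set of $\fkm$. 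Since $d = \embdim R - 1 = v(R)-1 = 2$ means $v(R)=3$, requiring three elements $a_1,a_2,a_3$ to be part of a minimal generating set of $\fkm$ is the same as $(a_1,a_2,a_3)=\fkm$; so the condition $\varphi(M,M)=(a_1,a_2,a_3)$ becomes simply $\varphi(M,M)=\fkm$. I would also note that in this case $\mu_R(I^2)=3$: indeed from Lemma \ref{embdim} and regularity of $A$ we get $\mu_R(\fkm/\varphi(I,I)) = d - \mu_R(I) = 0$, so $\varphi(I,I)=\fkm$, and since $\varphi(I,I)=\alpha I^2 \cong I^2$ (by Theorem \ref{rank}(2)) we obtain $\mu_R(I^2)=\mu_R(\fkm)=v(R)=3$; conversely $\mu_R(I)=2,\mu_R(I^2)=3$ together with $\varphi(M,M)=\fkm$ feed back into the hypotheses of the preceding theorem. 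This matches alternative (ii) exactly.

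The only real content beyond bookkeeping is verifying the two reformulations: "$\varphi(\mathbf{e},\mathbf{e})\notin\fkm^2$" $\Leftrightarrow$ "$\varphi(M,M)=\fkm$" in the DVR case, and "$a_1,a_2,a_3$ part of a minimal system of generators of $\fkm$ with $v(R)=3$" $\Leftrightarrow$ "$\varphi(M,M)=\fkm$" in the hypersurface case, together with extracting $\mu_R(I^2)=3$. None of these is hard; the main (modest) obstacle is just making sure the equivalences of $v(R)$ with $d+1$ versus $d$ are handled consistently with the definition of hypersurface ($v(R)\le d+1$) and of regular ($v(R)=d$), and that the three-generator condition is correctly translated when $\embdim$ is pinned down. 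I would present part (1) in one line, part (2) in a short paragraph with the DVR reformulation, and part (3) by citing the two preceding results and performing the generator-count translation.
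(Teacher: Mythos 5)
Your proposal is correct and follows exactly the route the paper intends: the corollary is stated immediately after the $\mu_R(M)=2$ theorem with the remark that it follows from $1\le\mu_R(M)\le d$ in Corollary \ref{r62}(1), and your case-split on $\mu_R(M)$ combined with the two preceding results is precisely that argument. The two reformulations you flag (passing from $\varphi(\mathbf{e},\mathbf{e})\notin\fkm^2$ to $\varphi(M,M)=\fkm$ in the DVR case, and from ``$a_1,a_2,a_3$ part of a minimal system with $v(R)=3$'' to $\varphi(M,M)=\fkm$ together with $\mu_R(I^2)=\mu_R(\fkm)=3$ in the hypersurface case) are the only content and you handle them correctly.
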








\begin{acknowledgments}
The authors are grateful to Yuta Takahashi.
\end{acknowledgments}



\end{document}